\documentclass[11pt, a4paper]{amsart}
\usepackage{amssymb, array, amsmath, amscd, pdfpages, enumerate, amsthm, setspace, hyperref,mathtools}

\usepackage[utf8]{inputenc}
\usepackage{amssymb}
\usepackage{bm}
\usepackage{tikz}
\usetikzlibrary{shapes,arrows}
\usepackage{graphicx} 
\usepackage{comment}
\usepackage{subfig}

\DeclareMathOperator{\re}{Re}
\DeclareMathOperator{\im}{Im}

\newcommand*{\R}{{\mathbb{R}}}

\newcommand*{\abs} [1]{\lvert#1\rvert}
\newcommand*{\norm}[1]{\lVert#1\rVert}

\newcommand*{\Abs}[2][default]{\ifthenelse{\equal{#1}{default}}{\left\lvert#2\right\rvert}{\ldelim{#1}{\lvert}#2\rdelim{#1}{\rvert}}}
\newcommand*{\Norm}[2][default]{\ifthenelse{\equal{#1}{default}}{\left\lVert#2\right\rVert}{\ldelim{#1}{\lVert}#2\rdelim{#1}{\rVert}}}

\newcommand*{\Iprod}[3][default]{\ifthenelse{\equal{#1}{default}}{\left\langle#2,#3\right\rangle}{\ldelim{#1}{\langle}#2,#3\rdelim{#1}{\rangle}}}
\newcommand*{\Dualpair}[3][default]{\ifthenelse{\equal{#1}{default}}{\left\langle#2,#3\right\rangle}{\ldelim{#1}{\langle}#2,#3\rdelim{#1}{\rangle}}}

\newcommand{\eq}[1]{\begin{align*}#1\end{align*}}
\newcommand{\eqn}[1]{\begin{align}#1\end{align}}

\newcommand{\ga}{\alpha}

\newcommand{\gw}{\omega}

\newcommand*{\ddb}[2][1]{\ifthenelse{\equal{#1}{1}}{\frac{d}{d#2}}{\frac{d^{#1}}{d#2^{#1}}}}
\newcommand*{\pd}[3][1]{\ifthenelse{\equal{#1}{1}}{\frac{\partial{#2}}{\partial{#3}}}{\frac{\partial^{#1}{#2}}{\partial#3^{#1}}}}

\newtheorem{theorem}{Theorem}[section]
\newtheorem{lemma}[theorem]{Lemma}

\theoremstyle{definition}
\newtheorem{definition}[theorem]{Definition}

\newtheorem{remark}[theorem]{Remark}


\title[Robust Controllers for a Flexible Satellite Model]
{Robust Controllers for a Flexible Satellite Model} 


\author[T. Govindaraj]{Thavamani Govindaraj}
\address[T. Govindaraj]{Mathematics, Faculty of Information Technology and Communication Sciences, Tampere University, PO.\ Box 692, 33101 Tampere, Finland}
\email{thavamani.govindaraj@tuni.fi}

\author[J.-P. Humaloja]{Jukka-Pekka Humaloja}
\address[J.-P. Humaloja]{University of Alberta, 9211-116 St, Edmonton, AB T6G 1H9, Canada}
\email{jphumaloja@ualberta.ca}

\author[L. Paunonen]{Lassi Paunonen}
\address[L. Paunonen]{Mathematics, Faculty of Information Technology and Communication Sciences, Tampere University, PO.\ Box 692, 33101 Tampere, Finland}
\email{lassi.paunonen@tuni.fi}

\subjclass{93C20, 93D23, 93B52, 35L99 (93B28).}
 \keywords{Distributed parameter systems, stability analysis, output regulation, coupled PDE-ODE system, boundary control system.}

\thanks{The research was supported by the Academy of Finland grants 298182, 310489 and 349002.
J.-P. Humaloja is funded by grants from the Jenny and Antti Wihuri Foundation and the Vilho, Yrj\"o and Kalle V\"ais\"al\"a Foundation}

\begin{document}
\maketitle

\begin{abstract}
We consider a PDE-ODE model of a flexible satellite that is composed of two identical flexible solar panels and a center rigid body. We prove that the satellite model is exponentially stable in the sense that the energy of the solutions decays to zero exponentially.  
In addition, we construct two internal model based controllers, a passive controller and an observer based controller, such that the linear and angular velocities of the center rigid body converge to the given sinusoidal signals asymptotically. 
A numerical simulation is presented to compare the performances of the two controllers.
\end{abstract}


\section{Introduction}

In this paper, we consider output tracking and disturbance rejection problem for a flexible satellite that is composed of two identical flexible solar panels and a center rigid body (Figure \ref{satellite model}). 
Modeling the satellite panels as viscously damped Euler-Bernoulli beams of length 1, the satellite system we study is given by (similar models can be found in \cite{Bontsema1988}, \cite{WeiHe2015})

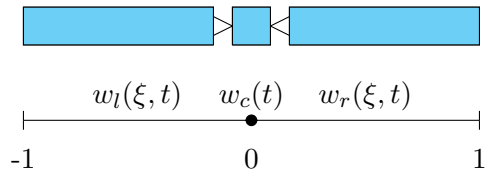
\begin{figure}[!ht]
\centering
\begin{tikzpicture}
  \draw (3,0) -- (0.5,0) -- (0.5,0.5) -- (3,0.5) -- (3,0);
	\draw (3,0.125) -- (3.25,0.25);
	\draw (3,0.375) -- (3.25,0.25);
	\draw (3.25,0) -- (3.25,0.5) -- (3.75,0.5) -- (3.75,0) -- (3.25,0);
	\draw (3.75,0.25) -- (4,0.125);
	\draw (3.75,0.25) -- (4,0.375);
	\draw (4,0) -- (4,0.5) -- (6.5,0.5) -- (6.5,0) -- (4,0);
	\filldraw[fill = cyan!50!] (0.5,0) rectangle (3,0.5);
	\filldraw[fill = cyan!50!] (3.25,0) rectangle (3.75,0.5);
	\filldraw[fill = cyan!50!] (4,0) rectangle (6.5,0.5);
	\filldraw
(0.5,-1) -- (3.5,-1) node[pos = 0.5, sloped, above] {$w_l(\xi,t)$}  circle (2pt) node[sloped,above] {$w_c(t)$}     -- 
(6.5,-1)  node[pos = 0.5,sloped,above] {$w_r(\xi,t)$};
\draw (0.5,-1.125) -- (0.5,-1);
\draw (0.5,-0.875) -- (0.5,-1);
\draw (6.5,-1.125) -- (6.5,-1);
\draw (6.5,-0.875) -- (6.5,-1);
\draw (0.5,-1.25) node[align=left,below]{-1};
\draw (3.5,-1.25) node[align=center,below]{0};
\draw (6.5,-1.25) node[align=right,below]{1};
\end{tikzpicture}
\caption{Satellite with flexible solar panels}
\label{satellite model}
\end{figure}	

\begin{equation} \label{eq:pdeode}
\begin{gathered}
\rho a\ddot{w}_l(\xi,t)+EI w_l^{\prime\prime\prime\prime}(\xi,t)+\gamma \dot{w}_l(\xi,t)=b_{d1}(\xi)w_{d1}(t),\  -1<\xi<0, t>0,\\
\rho a\ddot{w}_r(\xi,t)+EI w_r^{\prime\prime\prime\prime}(\xi,t)+\gamma \dot{w}_r(\xi,t)=b_{d2}(\xi)w_{d2}(t),\ 0<\xi<1, t>0,\\
m\ddot{w}_c(t)=EI w_l^{\prime\prime\prime}(0,t)-EI w_r^{\prime\prime\prime}(0,t)+u_1(t)+w_{d3}(t),\\
I_m\ddot{\theta}_c(t)=-EI w_l^{\prime\prime}(0,t)+EI w_r^{\prime\prime}(0,t)+u_2(t)+w_{d4}(t),\\
w_l^{\prime\prime}(-1,t)=0,\quad \ w_l^{\prime\prime\prime}(-1,t)=0,\\
w_r^{\prime\prime}(1,t)=0,\quad \ w_r^{\prime\prime\prime}(1,t)=0,\\
\dot{w}_l(0,t)=\dot{w}_r(0,t)=\dot{w_c}(t),\\
\dot{w}_l^{\prime}(0,t)=\dot{w}_r^{\prime}(0,t)=\dot{\theta}_c(t),
\end{gathered}
\end{equation}
where $w_l(\xi,t)$ and $w_r(\xi,t)$ are the transverse displacements of the left and the right beam, respectively, $\dot{w}_l(\xi,t)$ and $w_l'(\xi,t)$ denote time and spatial derivatives of $w_l(\xi,t)$, respectively, $w_c(t)$ and $\theta_c(t)$ are the linear and angular displacements of the rigid body, respectively, $u_1(t)$ and $u_2(t)$ are external control inputs of the satellite model, $w_{d1}(t)$, $w_{d2}(t)$, $w_{d3}(t)$ and $w_{d4}(t)$ are external disturbances in the satellite model, $b_{d1}(\cdot) \in L^2(-1,0)$ and $b_{d2}(\cdot) \in L^2(0,1)$ are real-valued functions. Here $\dot{w}_c(t)=\dot{w}_l(\xi,t)|_{\xi=0}=\dot{w}_r(\xi,t)|_{\xi=0}$ and $\dot{\theta}_c(t)=\dot{w}_l^{\prime}(\xi,t)|_{\xi=0}=\dot{w}_r^{\prime}(\xi,t)|_{\xi=0}$ are linear and angular velocities of the rigid body, respectively. The parameters $a$, $\rho$, $E$, $I$ and $\gamma$ are cross sectional area, linear density, Young's modulus of elasticity, second moment of area of the cross section and the viscous damping coefficient of the beams, respectively, and $m$ and $I_m$ denote the mass and the mass moment of inertia of the center rigid body.
Measurements that are the outputs of the model are taken on the center rigid body and are given by,
\begin{equation}\label{outputs}
y_1(t)=\dot{w}_c(t),\ y_2(t)=\dot{\theta}_c(t).
\end{equation}

The main control objective is to construct a dynamic error feedback controller such that the outputs, the linear and the angular velocities of the center rigid body, track given reference signals $y_{ref}(t)$ asymptotically. i.e.,
\begin{align*}
\norm{y(t)-y_{ref}(t)} \rightarrow 0 \quad \text{as} \quad t \rightarrow \infty,
\end{align*}
where $y(t)=(y_1(t),y_2(t))^T$ is the output of the satellite model. In addition, the proposed controller is required to be robust in the sense that it achieves output tracking despite perturbations, disturbances and uncertainties in the satellite system.

As the first main contribution of this paper, we present a detailed proof of uniform exponential stability of the satellite model in the sense that the energy of the solutions decay exponentially to zero. 
The stability proof is based on the results from $C_0$-semigroup theory.  
We write the satellite system as a coupled system of a PDE (two beams are combined into a single system) and an ODE (rigid body) via a power preserving interconnection. The main proof is divided naturally into two steps. In the first step, we show that the imaginary axis is included in the resolvent set of the satellite system operator. In the second step, we derive an explicit expression for the resolvent operator 
and show that it is uniformly bounded on the imaginary axis. The stability proof is challenging because the input and the output operators of the PDE are not admissible and its transfer function is not well-posed (in the sense that the input-output map of the PDE is unbounded).

As the second main contribution of this paper, we construct two robust controllers, a passive controller \cite{RebarberWeiss2003}, \cite{Lassi2019} and an observer based controller \cite{HamPoh2010}, \cite{Pau2016}, for the robust output tracking of the satellite model. The proposed controller designs are based on the internal model principle \cite{Davison76},  \cite{Francis1975}, \cite{HamPoh2010}, \cite{Pau2016}, \cite{PauPoh2014}. 
Finally, simulation results testing the effectiveness of the controllers are presented.

There are several studies in the literature investigating control problems of satellite models. In \cite{Bontsema1988}, the stabilization problem of a flexible spacecraft has been investigated using frequency domain approach. In \cite{WeiHe2015}, dynamic modeling and vibration control of a flexible satellite has been considered and vibrations of the solar panels have been suppressed using the single-point control input on the center body. In \cite{Aoues2019},  modeling and control of a rotating flexible spacecraft has been considered, a Proportional Derivative controller and a nonlinear controller have been presented to suppress elastic vibrations of the satellite model.  References \cite{WeiHe2015} and \cite{Aoues2019} use Lyapunov methods to prove the stability of the models. To the best of our knowledge, robust output tracking problem for flexible satellites has not been considered in the literature. 

Stability of coupled PDE-ODE systems can often be obtained using controllability and observability results. In \cite{ZhaWei11}, controllability and observability results of a well-posed and strictly proper linear system coupled with a finite-dimensional linear system with an invertible first component in its feedthrough matrix were presented. In \cite{ZhaWei17}, using results from \cite{ZhaWei11}, strong stability of coupled impedance passive systems was shown and the results were applied to the SCOLE model to show that the SCOLE model coupled with tuned mass damper system is strongly stable. Moreover, the SCOLE model is not exactly controllable in the natural energy state space (\cite[Sec. 1]{ZhaoWeiss2010}) but it was shown in \cite{ZhaoWeiss2010} that the SCOLE model is exactly controllable in a smoother state space. In \cite{ZhaoWeiss2018}, it was shown that a coupled system consisting of a well-posed and impedance passive linear system and an internal model based controller in a feedback connection is strongly stable. In our case, since the beam system in the satellite model is not well-posed on the natural energy state space and the rigid center body has no feedthrough term, the results of \cite{ZhaWei11}, \cite{ZhaWei17} cannot be utilized in showing the exponential stability of the satellite system. Moreover, since our aim is to achieve exponential stability of the closed-loop system and one of the proposed controllers is infinite-dimensional, the results in \cite{ZhaoWeiss2010}, \cite{ZhaWei11}, \cite{ZhaWei17} and \cite{ZhaoWeiss2018} are not applicable in showing the exponential stability of the closed-loop system consisting of the satellite system and the controller. The results in the above mentioned references have unstable infinite-dimensional part and therefore only strong stability of the coupled system was obtained. In this work, since the beam system is exponentially stable due to the distributed damping, we are able to prove the exponential stability of the satellite system. 

A preliminary version of these results has been presented in IFAC World Congress 2020 \cite{GovHumPau2020}. As the main novelty of this version with respect to \cite{GovHumPau2020}, we present a detailed proof of the exponential stability of the satellite system. We present a passive controller and an observer based controller which also achieve the robust output tracking of the satellite model and reject external disturbances. In addition, simulation results showing the performances of the controllers are presented. 

The paper is organized as follows. In Section \ref{sec.absfor}, we present the abstract formulation of the satellite model. In Section \ref{sec.sta}, we present some technical lemmas and prove the exponential stability of the satellite model. In Section \ref{sec.OR}, we present the tracking problem, the reference signal to be tracked by the satellite model and the disturbance signals to be rejected. We present two internal model based controllers for the robust output tracking of the satellite model. In addition, simulation results are presented for particular choices of reference and disturbance signals.	In Section \ref{sec.con}, we conclude our results.

\subsection{Notation}
For normed linear spaces $X$ and $Y$, $\mathcal{L}(X,Y)$ denotes the set of bounded linear operators from $X$ to $Y$. For a linear operator $A$, $D(A), \mathcal{R}(A)$ and $\mathcal{N}(A)$ denote domain, range and kernel of $A$, respectively. The resolvent and the spectrum of $A$ are denoted by $\rho(A)$ and $\sigma(A)$, respectively. The resolvent operator is denoted by $R(\lambda,A)=(\lambda-A)^{-1},\ \lambda \in \rho(A)$. We denote by $X_{-1}$ the completion of $X$ with respect to the norm $\|x\|_{-1} = \|(\beta I-A)^{-1}x\|,\ x \in X,\ \beta\in \rho(A)$ and by $A_{-1}\in \mathcal{L}(X,X_{-1})$, the extension of $A$ to $X_{-1}$. For functions $f,g:I\subset \mathbb{R}\to \mathbb{R}_+$ and $f_k,g_k\geq 0$, we denote $f(x)\lesssim g(x)$ and $f_k\lesssim g_k$ if there exist $M_1, M_2>0$ such that $f(x)\leq M_1 g(x)$ and $f_k\leq M_2 g_k$ for all values of $x \in I$ and $k\in J\subset \mathbb{N}$.

\section{Abstract Formulation of the Satellite Model}\label{sec.absfor}
In this section, we write our satellite model (\ref{eq:pdeode})-(\ref{outputs}) in the state space form 
\begin{equation}\label{statespaceform}
\begin{aligned}
\dot{x}(t)&=Ax(t)+Bu(t)+B_dw_d(t),\quad x(0)=x_0,\\
y(t)&=Cx(t)
\end{aligned}
\end{equation}
where $x(t)\in X$ is the state variable and $X$ is a Hilbert space, $u(t) \in U=\mathbb{R}^2$ is the control input, $w_d(t) \in U_d=\mathbb{R}^4$ is the external disturbance and $y(t) \in Y=\mathbb{R}^2$ is the output. The operator $A:D(A)\subset X \to X$ generates a strongly continuous semigroup on $X$ and the operators $B \in \mathcal{L}(U,X)$, $B_d \in \mathcal{L}(U_d,X)$ and $C\in \mathcal{L}(X,Y)$ are bounded. The formulation (\ref{statespaceform}) will be used in Section 4 in the construction of controllers for robust output regulation.

In order to write the satellite model (\ref{eq:pdeode})-(\ref{outputs}) in the state space form, we decompose the satellite system into a PDE system (the two beams combined into a single system) coupled with an ODE system (center rigid body) where PDE interacts with ODE via boundary controls and boundary observations called ``virtual boundary inputs'' and ``virtual boundary outputs", respectively. Figure \ref{interconnection} shows the boundary interconnections between the beams and the center rigid body. This type of decomposition has been considered, for example, in \cite{ZhaoWeiss2010} for SCOLE model. 
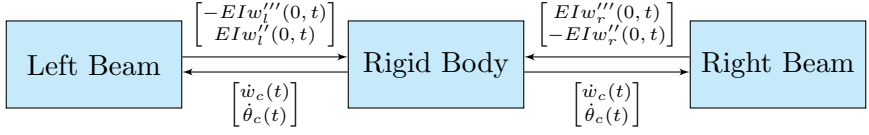
\begin{figure}[!ht]
\centering
\begin{tikzpicture}[auto, node distance=1.5cm,>=latex']
		\tikzstyle{block} = [draw, fill=cyan!20, rectangle, 
    minimum height=3em, minimum width=6em]
\tikzstyle{sum} = [draw, fill=blue!20, circle, node distance=1cm]
\tikzstyle{input1} = [coordinate, node distance=1cm]
\tikzstyle{linput} = [coordinate]
\tikzstyle{clinput} = [coordinate]
\tikzstyle{crinput} = [coordinate]
\tikzstyle{routput} = [coordinate]
\tikzstyle{output1} = [coordinate, node distance=1cm]
\tikzstyle{pinstyle} = [pin edge={to-,thin,black}]

    \node [input1, name=input1] {};
		\node [block, right of=input1] (Left Beam) {Left Beam};
		\node [linput, right of=Left Beam] (linput) {};
		\node [clinput, right of=linput] (clinput) {};
		\node [block, right of=clinput] (Rigid Body) {Rigid Body};
		\node [crinput, right of=Rigid Body] (crinput) {};
		\node [routput, right of=crinput] (routput) {};
		\node [block, right of=routput] (Right Beam) {Right Beam};
		\node [output1, right of=Right Beam] (output1) {};
		\draw [->] ([yshift=0.1cm] Left Beam.east) -- node[pos=0.5] {\tiny$\begin{bmatrix}-EI w_l^{\prime\prime\prime}(0,t)\\ EI w_l^{\prime\prime}(0,t)\end{bmatrix}$} ([yshift=0.1cm] Rigid Body.west);
		\draw [->] ([yshift=-0.1cm] Rigid Body.west) -- node {\tiny $\begin{bmatrix} \dot{w}_c(t) \\ \dot{\theta}_c(t) \end{bmatrix}$} ([yshift=-0.1cm] Left Beam.east);
		\draw [->] ([yshift=0.1cm] Right Beam.west) -- node [above] {\tiny$\begin{bmatrix}EI w_r^{\prime\prime\prime}(0,t) \\ -EI w_r^{\prime\prime}(0,t)\end{bmatrix}$} ([yshift=0.1cm] Rigid Body.east);
		\draw [->] ([yshift=-0.1cm] Rigid Body.east) -- node[below] {\tiny$\begin{bmatrix} \dot{w}_c(t) \\ \dot{\theta}_c(t) \end{bmatrix}$} ([yshift=-0.1cm] Right Beam.west);
\end{tikzpicture}
\caption{Coupling of the beams with the rigid body}
\label{interconnection}
\end{figure}
As the first step towards state space formulation, we write the PDE as an impedance passive abstract boundary control and observation system given by the following definitions.

\begin{definition}[Boundary Control and Observation System {\cite[Def. 3.3.2]{CurZwa1995}, \cite[Ch. 11]{JacZwa2011}}]
Let $\hat{X}$, $\hat{U}$ and $\hat{Y}$ be Hilbert spaces. Consider the system
\begin{subequations}\label{BCS}
\begin{align}
\dot{\hat{x}}(t)&=\mathcal{\hat{A}}\hat{x}(t),\quad \hat{x}(0)=\hat{x}_{0}, \label{bcs1}\\
\mathcal{\hat{B}}\hat{x}(t)&=\hat{u}(t)\label{bcs2},\\
\hat{y}(t)&=\mathcal{\hat{C}}\hat{x}(t)\label{bcs3}
\end{align}
\end{subequations}
where $\mathcal{\hat{A}}:D(\mathcal{\hat{A}})\subset \hat{X}\rightarrow \hat{X}$,  $\mathcal{\hat{B}}:D(\mathcal{\hat{B}})\subset \hat{X} \rightarrow \hat{U}$ and $\mathcal{\hat{C}}:D(\mathcal{\hat{A}})\rightarrow \hat{Y}$ are linear operators and $D(\mathcal{\hat{A}})\subset D(\mathcal{\hat{B}})$. Then (\ref{BCS}) is a boundary control and observation system if the following hold.
\begin{enumerate}
	\item[1.] The operator $\hat{A}:D(\hat{A})\rightarrow \hat{X}$ with $D(\hat{A})=D(\mathcal{\hat{A}})\cap \mathcal{N}(\mathcal{\hat{B}})$ and $\hat{A}\hat{x}=\mathcal{\hat{A}}\hat{x}$ for $\hat{x}\in D(\hat{A})$ is the infinitesimal generator of a $C_0$-semigroup $(\hat{T}(t))_{t\geq 0}$ on $\hat{X}$.
	\item[2.] There exists an operator $\hat{H} \in \mathcal{L}(\hat{U},\hat{X})$ such that for all $\hat{u} \in \hat{U}$ we have $\hat{H}\hat{u} \in D(\mathcal{\hat{A}})$, $\mathcal{\hat{A}}\hat{H} \in \mathcal{L}(\hat{U},\hat{X})$ and $\mathcal{\hat{B}}\hat{H}\hat{u}=\hat{u},\ \hat{u}\in \hat{U}$.
\end{enumerate}
\end{definition}

\begin{remark} \label{unboundedB}
Let $(\mathcal{\hat{A}},\mathcal{\hat{B}})$ be a boundary control system. Then according to \cite[Ch. 10]{TucWei2009}, there exists a unique $\hat{B}\in \mathcal{L}(\hat{U},\hat{X}_{-1})$ such that $\mathcal{\hat{A}}=\hat{A}_{-1}+\hat{B}\mathcal{\hat{B}}$ on $D(\mathcal{\hat{A}})$ and therefore (\ref{bcs1}) and (\ref{bcs2}) can be written as 
\begin{align*}
\dot{\hat{x}}(t)&=\hat{A}_{-1}\hat{x}(t)+\hat{B}\hat{u}(t),\quad \hat{x}(0)=\hat{x}_0.
\end{align*}
\end{remark}
\begin{definition}[Impedance Passive System]
The system $(\mathcal{\hat{A}}, \mathcal{\hat{B}}, \mathcal{\hat{C}})$ is called imped\-ance passive if the solutions of \eqref{BCS} satisfy
\begin{align*}
\frac{1}{2}\frac{d}{dt}\norm{\hat{x}(t)}_{\hat{X}}^2 \leq \text{Re}\left\langle \hat{u}(t),\hat{y}(t) \right\rangle_{\hat{U},\hat{Y}}, \quad t>0.
\end{align*}
\end{definition}
We note that the above definition holds also for the systems in the state space form. 
Since we are interested in controlling velocities of the center rigid body, we use energy state space \cite{JacZwa2011} instead of natural state space in order to write the PDE as an abstract system.

\subsection{Abstract Formulation of the Beams}

The left beam system that we extract from the the satellite system is described by,
\begin{subequations}\label{leftbeam}
\begin{align}
&\ddot{w}_l(\xi,t)+\frac{EI}{\rho a} w_l^{\prime\prime\prime\prime}(\xi,t)+\frac{\gamma}{\rho a}\dot{w}_l(\xi,t)=0,\label{lb1}\\
&\dot{w}_l(0,t)=u_{l1}(t),\  \dot{w}_l^{\prime}(0,t)=u_{l2}(t),\label{lb2}\\
&w_l^{\prime\prime}(-1,t)=0,\quad \ w_l^{\prime\prime\prime}(-1,t)=0,\label{lb3}\\
&y_{l1}(t)=-EI w_l^{\prime\prime\prime}(0,t),\  y_{l2}(t)=EI w_l^{\prime\prime}(0,t).\label{lb4}
\end{align}
\end{subequations}
where $-1<\xi<0, t>0$ and $u_{l1}(t),\ u_{l2}(t)$ are the virtual boundary inputs and $y_{l1}(t),\ y_{l2}(t)$ are the virtual boundary outputs of the left beam (see Figure \ref{interconnection}), respectively.

By choosing the state variable 
\begin{align*}x_l(t)=\begin{bmatrix}\rho a \dot{w}_l(\cdot,t) \\ w_l^{\prime\prime}(\cdot,t)\end{bmatrix},\end{align*} where $\dot{w}_l(\cdot,t)$ and $w_l^{\prime\prime}(\cdot,t)$ are the velocity and the bending moment of the left beam, respectively, (\ref{leftbeam}) can be written in boundary control and observation form on the state space $X_l=L^2([-1,0];\mathbb{R}^2)$ as 
\begin{subequations}\label{eq:beam1}
\begin{align}
\dot{x}_l(t)&=\mathcal{A}_lx_l(t),\label{abs1}\\ \mathcal{B}_l x_l(t)&=u_l(t),\label{abs2}\\ y_l(t)&= \mathcal{C}_l x_l(t),\label{abs3} 
\end{align}
\end{subequations}
where 
 \begin{align*}
\mathcal{A}_l x_l(t)=\begin{bmatrix} -\gamma {(\rho a)}^{-1} & -EI\partial_{\xi\xi} \\ {(\rho a)}^{-1}\partial_{\xi\xi} & 0 \end{bmatrix}\begin{bmatrix}\rho a \dot{w}_l(\cdot,t) \\ w_l^{\prime\prime}(\cdot,t)\end{bmatrix},
\end{align*} with
\begin{align*}
D(\mathcal{A}_l) = \{x_l \in X_l \ |\  &\mathcal{H}_lx_l \in H^2([-1,0];\mathbb{R}^2),   x_{l2}(-1) = x_{l2}^{\prime}(-1)=0\},
\end{align*}
\begin{align*}
\mathcal{H}_l=\begin{bmatrix} {(\rho a)}^{-1} & 0 \\ 0 & EI\end{bmatrix},\ 
\mathcal{B}_l x_l(t) = \begin{bmatrix} \dot{w}_l(0,t) \\ \dot{w}_l^{\prime}(0,t) \end{bmatrix}\ \text{and} \ 
\mathcal{C}_l x_l(t) = \begin{bmatrix}-EI w_l^{\prime\prime\prime}(0,t) \\ EI w_l^{\prime\prime}(0,t) \end{bmatrix}.
\end{align*}
The operators $\mathcal{B}_l:D(\mathcal{A}_l) \rightarrow U_l$ and $\mathcal{C}_l:D(\mathcal{A}_l)\rightarrow Y_l$ are the virtual control and observation operators with $U_l=\mathbb{R}^2$ and $Y_l=\mathbb{R}^2$.
Here, it is noted that the equations (\ref{abs1}), (\ref{abs2}), (\ref{abs3}) and $D(\mathcal{A}_l)$ corresponds to (\ref{lb1}), (\ref{lb2}), (\ref{lb4}) and (\ref{lb3}), respectively.
The space $X_l$  is a Hilbert space equipped with the energy norm \begin{align*}\|x_l\|_{X_l}^2 := \left\langle x_l,\mathcal{H}_lx_l \right\rangle_{L^2},\ x_l\in X_l.\end{align*} 
Here $\frac{1}{2}\norm{x_l}_{X_l}^2$ is the sum of the kinetic and potential energies of the left beam. The above choice of the state variable corresponds to the port-Hamiltonian formulation of the Euler Bernoulli beam. More details can be found, for example, in \cite{AugJac2014}, \cite{BAugner2018}, and \cite{Augner2018}.

In the same way, the right beam can be written in boundary control and observation form
on the Hilbert space $X_r=L^2([0,1];\mathbb{R}^2)$ with $u_{r1}(t)=\dot{w}_r(0,t),\ u_{r2}(t)=\dot{w}_r^{\prime}(0,t)$ as virtual boundary inputs and $y_{r1}(t)=EI w_r^{\prime\prime\prime}(0,t)$,\ $ y_{r2}(t)=-EI w_r^{\prime\prime}(0,t)$ as virtual outputs. We denote the input and output spaces of the right beam by $U_r=\mathbb{R}^2$ and $Y_r=\mathbb{R}^2$, respectively. Choosing the state variable $x_r(t)=\begin{bmatrix}{\rho a} \dot{w}_r(\cdot,t) \\ w_r^{\prime\prime}(\cdot,t)\end{bmatrix}$, we have
\begin{equation}\label{eq:beam2}
\begin{aligned}
\dot{x}_r(t)&=\mathcal{A}_rx_r(t),\\ \mathcal{B}_r x_r(t)&=u_r(t),\\ y_r(t)&= \mathcal{C}_r x_r(t),
\end{aligned}
\end{equation}
where
\begin{align*}
\mathcal{A}_r &= \begin{bmatrix} -\gamma {(\rho a)}^{-1} & -EI\partial_{\xi\xi} \\ {(\rho a)}^{-1}\partial_{\xi\xi} & 0 \end{bmatrix},
\mathcal{B}_r x_r(t) = \begin{bmatrix} \dot{w}_r(0,t) \\ \dot{w}_r^{\prime}(0,t) \end{bmatrix},
\mathcal{C}_r x_r(t) = \begin{bmatrix}EI w_r^{\prime\prime\prime}(0,t) \\ -EI w_r^{\prime\prime}(0,t) \end{bmatrix} 
\end{align*}
\begin{align*}
\text{and}\quad D(\mathcal{A}_r) = \{x_r \in X_r \ |\  &\mathcal{H}_rx_r \in H^2([0,1];\mathbb{R}^2),  x_{r2}(1)=x_{r2}^{\prime}(1)=0\},
\end{align*}
 $\mathcal{H}_r=\begin{bmatrix} {(\rho a)}^{-1} & 0 \\ 0 & EI\end{bmatrix}$. The space $X_r$ is equipped with the energy norm $\|x_r\|_{X_r}^2 := \left\langle x_r,\mathcal{H}_rx_r \right\rangle_{L^2},\ x_r\in X_r.$

Next, we combine the two beam systems (\ref{eq:beam1}) and (\ref{eq:beam2}) into a single open loop system on the Hilbert space $X_b=X_l \times X_r$ as follows. From the above formulation and from the boundary conditions in \eqref{eq:pdeode}, it is clear that $u_l(t)=u_r(t)$. Now, in order to have the coupling between the beam system and the rigid body as in Figure \ref{interconnection}, the input and the output of the combined beam system are chosen such that the output of the combined beam system is equal to the addition of the outputs of the left and the right beam systems and the input of the combined beam system is equal to the inputs of the left and the right beam systems. Therefore, denoting the input and output spaces of the combined system by $U_b$ and $Y_b$, respectively, let us define a new virtual output function 
\begin{align*}
y_b(t)=\begin{bmatrix}\mathcal{C}_l & \mathcal{C}_r\end{bmatrix}\begin{bmatrix}x_l(t) \\ x_r(t)\end{bmatrix} 
\end{align*}
and a virtual input function 
\begin{align*}
u_b(t)=\begin{bmatrix}\frac{1}{2}\mathcal{B}_l & \frac{1}{2}\mathcal{B}_r\end{bmatrix}\begin{bmatrix}x_l(t) \\ x_r(t)\end{bmatrix}.
\end{align*}
Then the combined system can be written as
\begin{equation}\label{eq:beamsystem}
\dot{x}_b(t)= \mathcal{A}_b x_b(t),\quad \mathcal{B}_bx_b(t)=u_b(t),\quad \mathcal{C}_bx_b(t)=y_b(t)
\end{equation}
where 
\begin{align*}
x_b(t) &=\begin{bmatrix} x_l(t) \\ x_r(t)\end{bmatrix},\ \mathcal{A}_b=\begin{bmatrix} \mathcal{A}_l & 0 \\ 0 & \mathcal{A}_r \end{bmatrix},\ \mathcal{B}_b=\begin{bmatrix}\frac{1}{2}\mathcal{B}_l & \frac{1}{2}\mathcal{B}_r\end{bmatrix},\ \mathcal{C}_b=\begin{bmatrix}\mathcal{C}_l & \mathcal{C}_r\end{bmatrix},\\ 
\text{and}\ D(\mathcal{A}_b)&=\{(x_l,x_r)\in D(\mathcal{A}_l)\times D(\mathcal{A}_r):\mathcal{B}_lx_l=\mathcal{B}_rx_r\}.
\end{align*}

\begin{lemma}
The beam system $(\mathcal{A}_b,\mathcal{B}_b,\mathcal{C}_b)$ in (\ref{eq:beamsystem}) is an impedance passive system on $(X_b,U_b,Y_b)$.
\end{lemma}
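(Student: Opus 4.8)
The plan is to reduce the statement to the (known) impedance passivity of each Euler--Bernoulli beam, and then to check that the factor $\tfrac12$ in $\mathcal B_b$ together with the interconnection constraint $\mathcal B_l x_l=\mathcal B_r x_r$ on $D(\mathcal A_b)$ makes the two beam power balances add up to the power balance of the combined system. Throughout, it suffices to verify the passivity inequality for classical solutions, i.e. for $x_b(t)\in D(\mathcal A_b)$.

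\emph{Step 1: power balance for a single beam.} Let $x_l(t)\in D(\mathcal A_l)$ solve \eqref{eq:beam1}. Since $\mathcal H_l x_l\in H^2([-1,0];\R^2)$ for $x_l\in D(\mathcal A_l)$, all the boundary traces below are well defined and integration by parts is justified. Differentiating $\tfrac12\norm{x_l(t)}_{X_l}^2=\tfrac12\int_{-1}^0\big(\rho a\,\dot w_l^2+EI\,(w_l'')^2\big)\,d\xi$, substituting $\rho a\,\ddot w_l=-EI\,w_l''''-\gamma\dot w_l$ from \eqref{lb1}, and integrating by parts twice in $\xi$, the two interior terms cancel and the boundary terms at $\xi=-1$ vanish by the free-end conditions $w_l''(-1)=w_l'''(-1)=0$ built into $D(\mathcal A_l)$. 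This leaves $\tfrac{d}{dt}\tfrac12\norm{x_l(t)}_{X_l}^2=-EI\,\dot w_l(0)w_l'''(0)+EI\,\dot w_l'(0)w_l''(0)-\gamma\norm{\dot w_l(\cdot,t)}_{L^2}^2$. By the definitions of $\mathcal B_l$ and $\mathcal C_l$ this equals $\iprod{\mathcal B_l x_l}{\mathcal C_l x_l}_{U_l,Y_l}-\gamma\norm{\dot w_l}_{L^2}^2\le\re\iprod{\mathcal B_l x_l}{\mathcal C_l x_l}_{U_l,Y_l}$, so $(\mathcal A_l,\mathcal B_l,\mathcal C_l)$ is impedance passive; the same computation (with the free-end conditions now at $\xi=1$) gives impedance passivity of $(\mathcal A_r,\mathcal B_r,\mathcal C_r)$. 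Alternatively, one may simply invoke the port-Hamiltonian impedance passivity of the Euler--Bernoulli beam from \cite{AugJac2014}, \cite{Augner2018}.

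\emph{Step 2: assembling the combined system.} For $x_b=(x_l,x_r)\in D(\mathcal A_b)$ we have $\norm{x_b}_{X_b}^2=\norm{x_l}_{X_l}^2+\norm{x_r}_{X_r}^2$ and each component solves its own beam system, so adding the two balances of Step 1 yields $\tfrac{d}{dt}\tfrac12\norm{x_b(t)}_{X_b}^2\le\re\iprod{\mathcal B_l x_l}{\mathcal C_l x_l}+\re\iprod{\mathcal B_r x_r}{\mathcal C_r x_r}$. On $D(\mathcal A_b)$ the coupling constraint reads $\mathcal B_l x_l=\mathcal B_r x_r=:v$, so the combined input is $\mathcal B_b x_b=\tfrac12\mathcal B_l x_l+\tfrac12\mathcal B_r x_r=v$ while the combined output is $\mathcal C_b x_b=\mathcal C_l x_l+\mathcal C_r x_r$. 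Hence $\re\iprod{\mathcal B_b x_b}{\mathcal C_b x_b}_{U_b,Y_b}=\re\iprod{v}{\mathcal C_l x_l}+\re\iprod{v}{\mathcal C_r x_r}=\re\iprod{\mathcal B_l x_l}{\mathcal C_l x_l}+\re\iprod{\mathcal B_r x_r}{\mathcal C_r x_r}$, which is exactly the bound obtained above; therefore $\tfrac{d}{dt}\tfrac12\norm{x_b(t)}_{X_b}^2\le\re\iprod{\mathcal B_b x_b(t)}{\mathcal C_b x_b(t)}_{U_b,Y_b}$, i.e. $(\mathcal A_b,\mathcal B_b,\mathcal C_b)$ is impedance passive on $(X_b,U_b,Y_b)$.

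The computations are routine; the only genuine points to watch are the regularity needed to justify the integration by parts (handled by $\mathcal H_l x_l\in H^2$ for $x_l\in D(\mathcal A_l)$) and, more importantly, the bookkeeping of the $\tfrac12$ in $\mathcal B_b$ against the absence of such a factor in $\mathcal C_b$: the mismatch is precisely cancelled by the interconnection constraint $\mathcal B_l x_l=\mathcal B_r x_r$, and dropping either the constraint or the factor $\tfrac12$ would destroy the balance. This identity between the scaled input pairing and the sum of the two beam pairings is the crux of the lemma.
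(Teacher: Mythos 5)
Your proposal is correct and follows essentially the same route as the paper: establish impedance passivity of each beam separately and then use the interconnection constraint $\mathcal B_l x_l=\mathcal B_r x_r$ to convert the sum of the two power balances into $\re\iprod{\mathcal B_b x_b}{\mathcal C_b x_b}$. The only difference is presentational — the paper cites \cite[Sec.~2.1]{GovHumPau2020} for the single-beam passivity, while you carry out the integration by parts explicitly (correctly, including the vanishing free-end boundary terms).
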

\begin{proof}
From \cite[Sec. 2.1]{GovHumPau2020}, we have that the left beam $(\mathcal{A}_l,\mathcal{B}_l,\mathcal{C}_l)$ and the right beam $(\mathcal{A}_r,\mathcal{B}_r,\mathcal{C}_r)$ are impedance passive systems. 
Now, using the boundary condition  
$u_l(t)=u_r(t)$, we obtain
\begin{align*}
\frac{1}{2}\frac{d}{dt}\norm{x_b(t)}_{X_b}^2 &= \frac{1}{2}\frac{d}{dt}\norm{x_l(t)}_{X_l}^2+\frac{1}{2}\frac{d}{dt}\norm{x_r(t)}_{X_r}^2,\\
                                                            &\leq \left\langle u_l(t),y_l(t)\right\rangle_{U_l,Y_l}+\left\langle u_r(t),y_r(t)\right\rangle_{U_r,Y_r}\\
                                                            &= \left\langle u_b(t),y_b(t)\right\rangle_{U_b,Y_b},
\end{align*} 
where $x_b(t),\ t>0$ are solutions of \eqref{eq:beamsystem}. Therefore, (\ref{eq:beamsystem}) is an impedance passive system.
\end{proof}
\begin{remark}\label{rem.Bb}
The impedance passivity of the systems $(\mathcal{A}_l,\mathcal{B}_l,\mathcal{C}_l)$, $(\mathcal{A}_r,\mathcal{B}_r,\mathcal{C}_r)$ and $(\mathcal{A}_b,\mathcal{B}_b,\mathcal{C}_b)$ imply that $A_l=\mathcal{A}_l|_{\mathcal{N}(\mathcal{B}_l)}$, $A_r=\mathcal{A}_r|_{\mathcal{N}(\mathcal{B}_r)}$ and $A_b=\mathcal{A}_b|_{\mathcal{N}(\mathcal{B}_b)}$ generate $C_0$-semigroups of contractions $T_l(t)$, $T_r(t)$ and $T_b(t)$ on $X_l$, $X_r$ and $X_b$, respectively. Therefore, $(\mathcal{A}_l,\mathcal{B}_l,\mathcal{C}_l)$, $(\mathcal{A}_r,\mathcal{B}_r,\mathcal{C}_r)$ and $(\mathcal{A}_b,\mathcal{B}_b,\mathcal{C}_b)$ are boundary control and observation systems \cite[Sec. 4.2]{GorZwaMas2005}. This implies from Remark \ref{unboundedB} that there exist unique operators $B_l\in \mathcal{L}(U_l,X_{l_{-1}})$, $B_r\in \mathcal{L}(U_r,X_{r_{-1}})$ and $B_b\in \mathcal{L}(U_b,X_{b_{-1}})$ such that $\mathcal{A}_l=A_{l_{-1}}+B_l\mathcal{B}_l$ on $D(\mathcal{A}_l)$, $\mathcal{A}_r=A_{r_{-1}}+B_r\mathcal{B}_r$ on $D(\mathcal{A}_r)$ and $\mathcal{A}_b=A_{b_{-1}}+B_b\mathcal{B}_b$ on $D(\mathcal{A}_b)$, respectively. 
\end{remark}
\subsection{The Rigid Body}
Without external inputs, the center rigid body that we extract from the satellite system is given by
\begin{equation}\label{rigidbody}
\begin{aligned}
m\ddot{w}_c(t)&=u_{c1}(t),\\
I_m\ddot{\theta}_c(t)&=u_{c2}(t),\\
y_{c1}(t)&=\dot{w}_c(t),\\ y_{c2}(t)&=\dot{\theta}_c(t),
\end{aligned}        
\end{equation}
 where $u_{c1}(t),\ u_{c2}(t)$ are the virtual inputs and $y_{c1}(t),\ y_{c2}(t)$ are the outputs of the rigid body (see Figure \ref{interconnection}), respectively. The state, input and output spaces of the rigid body are given by $X_c=\mathbb{R}^2,\ U_c=\mathbb{R}^2$ and $Y_c=\mathbb{R}^2$, respectively. Then, with the state variable $x_c(t)= \begin{bmatrix} \dot{w}_c(t) \\ \dot{\theta}_c(t)\end{bmatrix}$, the rigid body (\ref{rigidbody}) on the Hilbert space  $X_c$ can be written as,
\begin{equation}\label{eq:FDS}
\begin{aligned}
\dot{x}_c(t)&=A_c x_c(t)+B_c u_c(t), \\ y_c(t)&=C_c x_c(t),\\
\end{aligned}
\end{equation}
where,
\begin{align*}
A_c=0,\  B_c=\begin{bmatrix}\frac{1}{m} & 0 \\ 0 & \frac{1}{I_m} \end{bmatrix},\   C_c = \begin{bmatrix}1 & 0 \\ 0 & 1 \end{bmatrix},\ \text{and}\ u_c(t)=\begin{bmatrix}u_{c1}(t) \\ u_{c2}(t)\end{bmatrix}.
\end{align*}
The space $X_c$ is equipped with the energy norm 
\begin{align*}
\norm{x_c}_{X_c}^2 = x_c^*\mathcal{H}_c x_c, \quad \text{where}\quad \mathcal{H}_c = \begin{bmatrix} m & 0 \\ 0 & I_m \end{bmatrix}.
\end{align*}
It is straightforward to see that the rigid body is an impedance passive system on $X_c$ (see \cite[Sec. 2.3]{GovHumPau2020}). More details on the energy state space formulation of finite- dimensional systems can be found in \cite[Ch. 2.3]{JacZwa2011}.

\subsection{The Satellite System as a Coupled PDE-ODE System}\label{sec.pde-ode}

From the equations (\ref{eq:beamsystem}) and (\ref{eq:FDS}), we are now ready to write our satellite system (\ref{eq:pdeode})-(\ref{outputs}) as an abstract PDE-ODE system with the power-preserving interconnection $u_b(t)=y_c(t)$, $u_c(t) =-y_b(t)$ (see Figure \ref{PP-interconnection}) on the state space $X = X_b \times X_c$ as 
\begin{equation}\label{eq:satellite}
\begin{aligned}
\begin{bmatrix} \dot{x}_b(t) \\ \dot{x}_c(t) \end{bmatrix}&=\begin{bmatrix} \mathcal{A}_b & 0\\ -B_c \mathcal{C}_b & 0 \end{bmatrix}\begin{bmatrix} x_b(t) \\ x_c(t) \end{bmatrix}+\begin{bmatrix} 0 \\ B_c\end{bmatrix}u(t)+\begin{bmatrix} B_{d0} & 0 \\ 0 & B_c\end{bmatrix}w_d(t),\\
y(t) &= \begin{bmatrix} 0 & C_c\end{bmatrix}\begin{bmatrix} x_b(t) \\ x_c(t) \end{bmatrix},
\end{aligned}
\end{equation}
where $u(t)=\begin{bmatrix} u_1(t)\\u_2(t) \end{bmatrix}$,  $y(t)=\begin{bmatrix} y_1(t)\\y_2(t) \end{bmatrix}$, $w_d(t)=\begin{bmatrix} w_{d1}(t) & w_{d2}(t) & w_{d3}(t) & w_{d4}(t) \end{bmatrix}^T$ and $B_{d0}=\begin{bmatrix} b_{d1}(\cdot) & 0 \\ 0 & 0 \\ 0 & b_{d2}(\cdot) \\ 0 & 0\end{bmatrix}$.
\begin{figure}[!ht]
\centering
\begin{tikzpicture}[auto, node distance=2cm,>=latex']
		\tikzstyle{block} = [draw, fill=cyan!20, rectangle, 
    minimum height=3em, minimum width=6em]
\tikzstyle{sum} = [draw, fill=blue!20, circle, node distance=1cm]
\tikzstyle{input} = [coordinate, node distance=1cm]
\tikzstyle{output} = [coordinate]
\tikzstyle{sinput} = [coordinate]
\tikzstyle{routput} = [coordinate]
\tikzstyle{eoutput} = [coordinate, node distance=1cm]
\tikzstyle{pinstyle} = [pin edge={to-,thin,black}]

    \node [sinput, name=sinput] {};
		\node [input, right of=sinput] (input) {};
    \node [block, right of=input] (Beam System) {Beam System};
		\node [output, right of=Beam System] (output) {};
		\node [eoutput, right of=output] (eoutput) {};
    \node [block, below of=Beam System] (Rigid Body) {Rigid Body};
		\node [routput, below of=input] (routput) {};
		
		\draw  [draw,->] (sinput) -- node[pos=0.9] {$u_b(t)$} (input);
		\draw  (input) -- node {$$} (Beam System);
    \draw  [->] (Beam System) -- node[pos=0.99] {$-y_b(t)$}  
             node [near end] {$$} (output);
		\draw  (output) -- node {} (eoutput);
		\draw [->] (eoutput) |-node[pos=1.8] {$y_c(t)$} 
             node [near end] {$$}  (Rigid Body);
    \draw  (Rigid Body) -|  (sinput) -- (input);
		\draw [->] (Rigid Body) -- (routput);
		\draw (Beam System) -- (eoutput);
		\draw (output) -- (eoutput) |- node[pos=0.8] {$u_c(t)$}   (Rigid Body)  ;
 \end{tikzpicture}
\caption{The interconnection between the beams and the rigid body}
\label{PP-interconnection}
\end{figure}
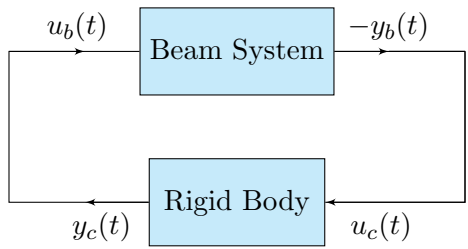

Equation (\ref{eq:satellite}) is in the form (\ref{statespaceform}) with $A=\begin{bmatrix} \mathcal{A}_b & 0\\ -B_c \mathcal{C}_b & 0 \end{bmatrix}$,  $B=\begin{bmatrix} 0 \\ B_c\end{bmatrix}$, $C=\begin{bmatrix} 0 & C_c\end{bmatrix}$, $B_d=\begin{bmatrix} B_{d0} & 0 \\ 0 & B_c\end{bmatrix}$ and $ x(t)=\begin{bmatrix} x_b(t) \\ x_c(t) \end{bmatrix}$. The domain of $A$ is given by
\begin{align*} D(A)=\{(x_b,x_c) \in D(\mathcal{A}_b)\times X_c\  :\  \mathcal{B}_bx_b= C_c x_c\}.\end{align*} 
The norm on $X$ is defined as 
\begin{align*}
\bigg\|{\begin{bmatrix} x_b \\ x_c \end{bmatrix}}\bigg\|_{X}^2=\norm{x_b}_{X_b}^2+\norm{x_c}_{X_c}^2,\quad x_b\in X_b,\ x_c\in X_c.
\end{align*}
\begin{remark}
The operator $A$  is dissipative, since using the power preserving interconnection, we obtain
\begin{align*}
\frac{1}{2}\frac{d}{dt}\bigg\|\begin{bmatrix} x_b(t) \\ x_c(t) \end{bmatrix}\bigg\|_{X}^2 \leq 0.
\end{align*} Therefore, by \cite[Theorem 3.5]{Augner2018},  $A$ generates a $C_0$-semigroup of contractions on $X$.
\end{remark}
\section{Stability of the Satellite Model}\label{sec.sta}
 
In this section, we will show the exponential stability of the satellite system  
in the sense that the operator $A$ defined in Section \ref{sec.pde-ode} generates an exponentially stable semigroup $T(t)$. Let us recall the operator $A$  
\begin{equation}\label{operatorA}
\begin{aligned}
A&=\begin{bmatrix} \mathcal{A}_b & 0\\ -B_c \mathcal{C}_b & 0 \end{bmatrix},\\
D(A)&=\{(x_b,x_c) \in D(\mathcal{A}_b)\times X_c\  :\  \mathcal{B}_bx_b= C_c x_c\}.
\end{aligned}
\end{equation} 
\begin{theorem}\label{mainthm}
The semigroup $T(t)$ generated by $A$ in \eqref{operatorA} is exponentially stable.
\end{theorem}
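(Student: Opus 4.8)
The plan is to apply the standard frequency-domain characterization of exponential stability for $C_0$-semigroups of contractions (the Gearhart--Prüss--Huang theorem): since $A$ generates a contraction semigroup on the Hilbert space $X$, it suffices to show that $i\R \subset \rho(A)$ and that $\sup_{\omega \in \R}\norm{R(i\omega,A)}_{\Lin(X)} < \infty$. I would organize the argument in the two steps already announced in the introduction.

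\emph{Step 1: $i\R \subset \rho(A)$.} I would solve the resolvent equation $(i\omega - A)(x_b,x_c)^T = (f_b,f_c)^T$ explicitly. Writing $x_b = (x_l,x_r)$ in the port-Hamiltonian beam variables, the equation $(i\omega-\mathcal{A}_b)x_b = f_b + (\text{coupling term through }x_c)$ together with $\mathcal{B}_b x_b = C_c x_c$ and the ODE row $-B_c\mathcal{C}_b x_b = i\omega x_c - f_c$ reduces, after eliminating the boundary data, to a boundary-value problem for a fourth-order ODE on each beam with the clamped/free conditions at $\xi = \mp 1$ and transmission/ODE conditions at $\xi = 0$. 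For $\omega \neq 0$ the distributed damping term $\gamma(\rho a)^{-1}$ makes the homogeneous problem have only the trivial solution: testing the homogeneous equation with the solution itself and using the power-preserving interconnection (dissipation identity from the Remark) forces $\re\iprod{i\omega x}{x}_X = 0$, hence the damping term yields $\dot{w}_l \equiv 0$, $\dot{w}_r \equiv 0$ on the beams, and then the boundary conditions propagate this to $x_c = 0$ and $x_b = 0$. The case $\omega = 0$, i.e. $0 \in \rho(A)$, I would treat separately by solving $A x = -f$ directly; since $A_c = 0$ this requires a compatibility-free explicit integration, using $\mathcal{C}_b x_b = -B_c^{-1}(\text{second block of }f)$ to pin down $x_c$. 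Closedness of $A$ plus injectivity plus surjectivity (the explicit solution) gives $i\omega \in \rho(A)$.

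\emph{Step 2: uniform boundedness of $R(i\omega,A)$ on $i\R$.} Here I would derive an explicit formula for $R(i\omega,A)$ by carrying through the solution of the boundary-value problem above, expressing $x_l,x_r$ via variation of parameters against the fundamental solutions $e^{\mu_k(\omega)\xi}$ of the characteristic equation $EI(\rho a)^{-1}\mu^4 = -(\omega^2 + i\omega\gamma(\rho a)^{-1})\cdot(\rho a)$ (roots behaving like $|\omega|^{1/2}$ as $|\omega|\to\infty$), and solving the resulting small linear system for the boundary constants. I would then estimate $\norm{x}_X \lesssim \norm{f}_X$ with a constant independent of $\omega$, splitting into a low-frequency regime $|\omega| \le R_0$ (where continuity of $\omega \mapsto R(i\omega,A)$ on the compact interval from Step 1 suffices) and a high-frequency regime $|\omega| \ge R_0$, where the decisive gain comes from the damping: the factor $\re(\mu_k) \sim c|\omega|^{1/2} > 0$ makes the boundary determinant bounded away from zero after suitable normalization, and the $1/\sqrt{|\omega|}$ decay of the relevant matrix entries controls the inverse. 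An equivalent route, which I would fall back on if the explicit determinant is unwieldy, is the contradiction argument: suppose $\norm{x_n}_X = 1$, $\omega_n \to \infty$, $(i\omega_n - A)x_n \to 0$; the dissipation identity forces the damping dissipation $\gamma(\rho a)^{-1}\norm{\dot w_{l,n}}^2 + \gamma(\rho a)^{-1}\norm{\dot w_{r,n}}^2 \to 0$ and (via the interconnection) $\iprod{u(t),y(t)}$-type boundary terms $\to 0$, and then one bootstraps through the equations and boundary conditions to conclude $\norm{x_n}_X \to 0$, a contradiction.

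\emph{Main obstacle.} The hard part will be Step 2, and specifically the fact flagged in the introduction: the beam's boundary input/output operators $\mathcal{B}_b,\mathcal{C}_b$ are not admissible and the beam transfer function is not well-posed. Consequently one cannot simply quote an admissibility/well-posedness-based perturbation result for the interconnection; the uniform bound must be extracted from the genuine decay of the resolvent kernel produced by the distributed damping, paying close attention to how the unbounded boundary trace $\mathcal{C}_b x_b = (-EIw_l'''(0),EIw_l''(0))^T$ — two spatial derivatives off the energy-norm regularity — is nonetheless tamed by the $|\omega|^{1/2}$-growth of the characteristic roots. Keeping all constants uniform in $\omega$ through this trace estimate is the crux of the proof.
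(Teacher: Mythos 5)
Your overall strategy coincides with the paper's: both use the Gearhart--Pr\"uss frequency-domain criterion for the contraction semigroup generated by $A$, prove $i\R\subset\rho(A)$ by explicitly solving the resolvent equation, and then bound $\norm{R(i\omega,A)}$ uniformly. Your Step 1 is sound (the dissipation/unique-continuation argument for injectivity is a valid alternative to the paper's algebraic one, and explicit solvability at $\omega=0$ amounts to checking that the static beam transfer function $P_b(0)=\gamma\diag(2,\tfrac{2}{3})$ is invertible, which the paper verifies in Lemma~\ref{lemtf}). Where you differ organizationally is Step 2: the paper factors the resolvent through the transfer functions $P_b$, $P_c$ and the Schur-complement-type operator $S(i\omega)=[i\omega+B_cP_b(i\omega)C_c]^{-1}$, and separately establishes $\norm{P_b(i\omega)}\lesssim|\omega|+1$, $\norm{R(i\omega,A_{b_{-1}})B_b}\lesssim\sqrt{|\omega|+1}$, $\norm{C_bR(i\omega,A_b)}\lesssim\sqrt{|\omega|+1}$, and uniform boundedness of $(I+P_b P_c)^{-1}$; the $\sqrt{|\omega|}$ growth of the input and output maps is then exactly compensated by the $1/|\omega|$ decay of $S(i\omega)$. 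Your monolithic variation-of-parameters route is equivalent in principle, and you correctly identify the non-admissibility of the boundary traces as the source of difficulty.

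The genuine gap is at the crux of Step 2. You assert that $\re(\mu_k)\sim c|\omega|^{1/2}$ ``makes the boundary determinant bounded away from zero after suitable normalization,'' with the $1/\sqrt{|\omega|}$ decay of the matrix entries controlling the inverse. This is precisely what must be proved, and the asymptotics do not make it automatic: in the paper's notation the relevant determinant factor is $Q_1(\omega)=1-\frac{4EI}{m}\frac{\alpha(\omega)^3}{\omega^2}C_{2,\omega}$, and although $\alpha(\omega)^3/\omega^2\sim|\omega|^{-1/2}$, the coefficient $C_{2,\omega}$ grows like $\sqrt{|\omega|}$ (because $\im\alpha(\omega)\sim|\omega|^{-1/2}$ produces a $\coth$ of a small argument), so the correction to $1$ is $O(1)$, not $o(1)$. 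One must therefore exclude near-resonances where this $O(1)$ term approaches $-1$ along a sequence of frequencies; the paper does this in Lemma~\ref{ub_PbPc} by comparing $Q_1(\omega)$ with $1+f(\omega)\tan(\alpha(\omega))$ and running a two-case contradiction argument on the decay rate of $\abs{\cos(\re\alpha(\omega_k))}$. Without an argument of this type your determinant estimate does not close. Your fallback (take $\norm{x_n}_X=1$ with $(i\omega_n-A)x_n\to0$ and bootstrap from the vanishing damping dissipation) faces the same obstruction in a different guise: $\norm{\dot w_n}_{L^2}\to0$ does not control $\omega_n\dot w_n$, and propagating smallness through the non-admissible boundary traces at $\xi=0$ requires essentially the same quantitative resonance analysis.
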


 We prove the theorem by using frequency domain criteria \cite[Cor. 3.36]{LuoGuoMor99} which states that the semigroup $T(t)$ generated by $A$ is exponentially stable if and only if $i\mathbb{R}\subset \rho(A)$ and $\sup_{\omega\in\mathbb{R}}{\norm{R(i\omega,A)}}<\infty$. We complete the proof in the following steps. Since the satellite system is a coupled system of the beam system and the center rigid body, we will first show that $i\mathbb{R}\subset \rho(A_b)$ and $\sup_{\omega\in\mathbb{R}}{\norm{R(i\omega,A_b)}}<\infty$ where $A_b=\mathcal{A}_b|_{\mathcal{N}(\mathcal{B}_b)}$. As the second step, we will show that $i\mathbb{R}\subset \rho(A)$. In this step, we will obtain an expression for the resolvent operator $R(i\omega,A)$. Next, we will estimate upper bounds of the operators which appear in the resolvent expression. Finally, we will show that $R(i\omega,A)$ is uniformly bounded.

\begin{lemma}\label{lem.observability}
The operator $A_b$ defined in Remark \ref{rem.Bb} satisfies $i\mathbb{R}\subset \rho(A_b)$ and $\sup_{\omega\in\mathbb{R}}{\norm{R(i\omega,A_b)}}<\infty$.
\end{lemma}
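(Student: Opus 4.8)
The plan is to analyze the beam resolvent equation $(i\omega - A_b)x_b = f$ directly. Writing $x_b = (x_l, x_r)$ and $f = (f_l, f_r)$ with $f_l = (f_{l1}, f_{l2})$, and recalling that the first component of $x_l$ is (a multiple of) the velocity and the second is the bending moment, I would translate the abstract equation back into the original Euler--Bernoulli variables. Since $A_b = \mathcal{A}_b|_{\mathcal{N}(\mathcal{B}_b)}$ and $\mathcal{B}_b = [\tfrac12\mathcal{B}_l, \tfrac12\mathcal{B}_r]$ with the coupling $\mathcal{B}_l x_l = \mathcal{B}_r x_r$, the boundary condition $x_b \in \mathcal{N}(\mathcal{B}_b)$ together with the defining constraint of $D(\mathcal{A}_b)$ forces $\mathcal{B}_l x_l = \mathcal{B}_r x_r = 0$, i.e.\ $\dot w_l(0) = \dot w_l'(0) = \dot w_r(0) = \dot w_r'(0) = 0$ (clamped at the interface), plus the free-end conditions at $\xi = \pm 1$. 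So the left and right beam subproblems decouple completely, and it suffices to treat one viscously damped Euler--Bernoulli beam clamped at one end and free at the other.

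First I would establish $i\omega \in \rho(A_b)$ for each fixed $\omega \in \mathbb{R}$. For $\omega \neq 0$, dissipativity of $A_b$ gives that any solution of $(i\omega - A_b)x_b = 0$ satisfies $\re\iprod{A_b x_b}{x_b} = 0$, which by the damping term $\gamma(\rho a)^{-1}$ forces the velocity component to vanish; feeding this back into the equation and using the clamped/free boundary conditions forces $x_b = 0$, so $i\omega - A_b$ is injective. For $\omega = 0$ one checks $0 \in \rho(A_b)$ separately by solving the static problem explicitly (a fourth-order ODE $EI\, v'''' = g$ with two clamped and two free conditions is uniquely solvable). Surjectivity and boundedness of the inverse then follow because $A_b$ has compact resolvent (the domain embeds compactly into $X_b$ via $H^2 \hookrightarrow L^2$), so injectivity on all of $i\mathbb{R}$ already yields $i\mathbb{R} \subset \rho(A_b)$.

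The substantive part is the uniform bound $\sup_{\omega}\norm{R(i\omega,A_b)} < \infty$, and here I would argue by contradiction in the style of a frequency-domain observability estimate: suppose there are $\omega_n \in \mathbb{R}$ and $x_n \in D(A_b)$ with $\norm{x_n}_{X_b} = 1$ and $\norm{(i\omega_n - A_b)x_n}_{X_b} =: \norm{f_n} \to 0$. Since the resolvent is continuous on the compact set $\{|\omega|\le R\}$ for every $R$, necessarily $|\omega_n|\to\infty$. Taking the real part of $\iprod{(i\omega_n - A_b)x_n}{x_n}$ gives $\gamma(\rho a)^{-1}\norm{\dot w_n}_{L^2}^2 = \re\iprod{f_n}{x_n} \to 0$, so the velocity parts tend to zero in $L^2$. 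The goal is then to propagate this into decay of the bending-moment (potential energy) part, contradicting $\norm{x_n} = 1$. I expect this to be the main obstacle: with $|\omega_n|\to\infty$ one must control the second component $w_n''$ uniformly, and the natural route is a multiplier/energy estimate on the beam equation $-\omega_n^2 \rho a\, w_n + EI\, w_n'''' + i\omega_n \gamma w_n = (\text{source built from } f_n)$ — for instance testing against $w_n$ itself and against $\xi w_n'$ (a Rellich-type multiplier) to trade $\omega_n^2\|w_n\|^2$ and $\|w_n''\|^2$ against the now-small velocity term $\omega_n w_n$ and the boundary contributions, which vanish by the clamped/free conditions. One has to be careful that the problem is stated in the energy variables $(\rho a\,\dot w, w'')$ rather than $(\dot w, w)$, so the estimates should be carried out on $x_n$ directly; alternatively, since the left/right beams are genuinely decoupled here, one can cite the known exponential stability of the viscously damped Euler--Bernoulli beam with these boundary conditions from the port-Hamiltonian literature (\cite{AugJac2014}, \cite{Augner2018}) and invoke the same frequency-domain characterization in reverse to get the uniform bound for free. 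I would present the direct multiplier argument as the primary proof and mention the literature as an alternative.
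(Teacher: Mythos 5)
Your proposal is correct, but it takes a genuinely different route from the paper. You argue directly in the frequency domain: after observing (correctly) that $x_b\in\mathcal{N}(\mathcal{B}_b)$ together with the constraint $\mathcal{B}_lx_l=\mathcal{B}_rx_r$ in $D(\mathcal{A}_b)$ decouples the system into two clamped--free viscously damped beams, you get $i\mathbb{R}\subset\rho(A_b)$ from injectivity plus compactness of the resolvent, and the uniform bound by contradiction from the dissipation identity. The paper instead proves exponential stability of $T_b(t)$ by writing $A_r=A_0+B_0$ with $A_0$ skew-adjoint and invoking the Chen--Fulling--Narcowich--Sun theorem, which requires computing the eigenvalues and eigenfunctions of the undamped beam explicitly, checking the spectral gap condition, and verifying the uniform lower bound $\norm{B_0\phi_k}\geq\delta$ through the asymptotics of $\norm{g_k}_{L^2}/\norm{f_k}_{L^2}$. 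Your route avoids all spectral computations and is more elementary and robust (it would survive, e.g., variable coefficients, where the gap condition could fail), while the paper's route produces the explicit eigenstructure as a by-product. One remark on your sketch: the step you flag as the main obstacle --- propagating the decay of the velocity component into decay of the bending moment --- does not in fact require a Rellich-type multiplier. Writing $(i\omega_n-A_b)x_n=h_n\to0$ with $x_n=(f_n,g_n)$ in the energy variables, pair the second equation $i\omega_n g_n-(\rho a)^{-1}f_n''=h_{n2}$ with $g_n$ in $L^2$; integration by parts kills all boundary terms by the clamped/free conditions, and substituting $EI\,g_n''=h_{n1}-(i\omega_n+\gamma(\rho a)^{-1})f_n$ from the first equation gives
\begin{align*}
|\omega_n|\,\norm{g_n}_{L^2}^2\lesssim \norm{f_n}_{L^2}\norm{h_{n1}}_{L^2}+(|\omega_n|+1)\norm{f_n}_{L^2}^2+\norm{h_{n2}}_{L^2}\norm{g_n}_{L^2},
\end{align*}
so dividing by $|\omega_n|\to\infty$ and using $\norm{f_n}_{L^2}\to0$ already yields $\norm{g_n}_{L^2}\to0$, the desired contradiction. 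Hence your primary argument closes with the natural test function alone, and the fallback citation to the port-Hamiltonian literature is not needed.
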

\begin{proof}
We show that the semigroup $T_b(t)$ generated by $A_b$ is exponentially stable  
which guarantees $i\mathbb{R}\subset \rho(A_b)$ and uniform boundedness of the resolvent $R(i\omega,A_b)$.
First we claim that the operator $A_r=\mathcal{A}_r|_{\mathcal{N}(\mathcal{B}_r)}$ corresponding to the right beam system (\ref{eq:beam2}) generates an exponentially stable semigroup $T_r(t), t\geq 0$. We use \cite[Main Theorem 1]{Chen1991}.
We write $A_r$ as $A_r=A_0+B_0$ \
where \begin{align*}
A_0=\begin{bmatrix} 0 & -EI\partial_{\xi\xi} \\ {(\rho a)}^{-1}\partial_{\xi\xi} & 0 \end{bmatrix},\quad B_0=\begin{bmatrix} -\gamma (\rho a)^{-1} & 0 \\ 0 & 0 \end{bmatrix}
\end{align*}
and $D(A_0)=D(A_r)$. We will show that the operators $A_0$ and $B_0$ satisfy the following conditions.
\begin{enumerate}
	\item [(c1)] The operator $A_0$ is skew-adjoint and it has compact resolvent.
	\item [(c2)] The spectrum of $A_0$ satisfies the gap property
	\begin{align*}
	\inf{\{|\lambda_j-\lambda_k|\ | j,k=1,2,3,\cdots, j\neq k\}}>0.
	\end{align*}
	\item [(c3)] The operator $B_0$ is dissipative.
	\item [(c4)] If any sequence $\{(x_{r_n})\in X_r,\ n=1,2,\cdots\}$ satisfies \begin{align*}\lim_{n \to \infty}\text{Re}\left\langle B_0 x_{r_n},x_{r_n}\right\rangle_{X_r}=0,\end{align*} then $\lim_{n\to\infty}\norm{B_0 x_{r_n}}_{X_r}=0$.
	\item [(c5)] There exists $\delta>0$ such that $\norm{B_0 \phi_k}_{X_r}\geq \delta$, where $\phi_k,\ k\in \mathbb{Z}$ is an orthonormal eigenvector of $A_0$.
\end{enumerate}
 We have $\text{Re}\left\langle A_0 x_r,x_r \right\rangle=0, x_r \in D(A_0)$. Therefore, by \cite[Thm. 2.3]{AugJac2014}, $A_0$ has compact resolvent. This implies that the operator $A_0$ is skew-adjoint.

By a direct computation, we can obtain eigenvalues $i\lambda_k$ of $A_0$ and orthonormal basis $\phi_k=(f_k,g_k)^T, k\in \mathbb{Z}$ consisting of eigenvectors of $A_0$. The eigenvalues and the eigenvectors are given by
\begin{equation}\label{eigvec}
\begin{aligned}
i\lambda_k &=i \sqrt{\frac{EI}{\rho a}} [\pi(k-\frac{1}{2})+\mathcal{O}(e^{-\pi(k-\frac{1}{2})})]^2, \\
f_k(\xi)&=\beta_k [(\cosh{(\mu_k)}+\cos{(\mu_k)}) (\cosh{(\mu_k\xi)}-\cos{(\mu_k\xi)}) \\ & \qquad \qquad -(\sinh{(\mu_k)}-\sin{(\mu_k)}) (\sinh{(\mu_k\xi)}-\sin{(\mu_k\xi)})],\\
g_k(\xi)&=\frac{\beta_k}{ i\sqrt{\rho a EI}}[(\cosh{(\mu_k)}+\cos{(\mu_k)}) (\cosh{(\mu_k\xi)}+\cos{(\mu_k\xi)})\\ & \qquad \qquad -(\sinh{(\mu_k)}-\sin{(\mu_k)})(\sinh{(\mu_k\xi)}+\sin{(\mu_k\xi)})],
\end{aligned}
\end{equation}
where $\mu_k=(\frac{\rho a}{EI})^{\frac{1}{4}} \sqrt{\lambda_k}$ are the 
 solutions of $\cosh{(\mu_k)}\cos{(\mu_k)}+1=0$ and $\beta_k >0$ are chosen such that $\norm{\phi_k}_{X_r}=1$. 
It is clear that 
the condition (c2) is satisfied since the gap between two successive eigenvalues satisfies $|\lambda_k-\lambda_{k+1}|\rightarrow \infty$ as $k \rightarrow \infty$.
The operator $B_0$ is dissipative since
\begin{align*}
\text{Re}\left\langle B_0x_r,x_r\right\rangle_{X_r} = -\gamma(\rho a)^{-2} \norm{x_{r_1}}_{L^2}^2 \leq 0.
\end{align*}
Also, $-\text{Re}\left\langle B_0x_r,x_r\right\rangle_{X_r}=\gamma^{-1}\rho a \norm{B_0x_r}^2_{X_r}$ holds. This implies that the conditions (c3) and (c4) are satisfied. 

Next, we show that the condition (c5) is satisfied.   
The formulas for $f_k$ and $g_k$ in \eqref{eigvec} can be used to show that 
\begin{equation}\label{eq.lim}
\lim_{|k|\to\infty} \frac{\norm{g_k}_{L^2}}{\norm{f_k}_{L^2}}=\frac{1}{\sqrt{\rho a EI}}.
\end{equation}
 Here we note that $f_k \neq 0,\ \forall k\in \mathbb{Z}$, since $(f_k,g_k)^T$ are  eigenvectors and $f_k=0$ would imply $g_k=\frac{(\rho a)^{-1}}{i\lambda_k}f_k^{\prime\prime}=0$. The equation \eqref{eq.lim} implies that for all $\epsilon>0$, there exists $N \in \mathbb{N}$ such that for all $k\in \mathbb{Z}$ with  $|k|\geq N$, we have
\begin{align*}
\bigg|\frac{\norm{g_k}_{L^2}}{\norm{f_k}_{L^2}}\bigg|\leq \epsilon+\frac{1}{\sqrt{\rho a EI}}.
\end{align*}
Thus
\begin{align*} \frac{\norm{g_k}_{L^2}}{\norm{f_k}_{L^2}} \leq \frac{C'}{\sqrt{\rho a EI}},\  \forall\  k\in \mathbb{Z}, \end{align*}
 where $C'= \max\{1+\epsilon \sqrt{\rho a EI} ,\sqrt{\rho a EI}\max_{|k|<N}\frac{\norm{g_k}_{L^2}}{\norm{f_k}_{L^2}}\}$.   Now we obtain
\begin{align*}
\norm{B_0 \phi_k}_{X_r}^2&= \gamma^2 (\rho a)^{-3}\norm{f_k}_{L^2}^2,\\
                       &\geq\frac{1}{2}\gamma^2 (\rho a)^{-3}(\norm{f_k}_{L^2}^2+\frac{\rho a EI}{C'^2} \norm{g_k}_{L^2}^2 ),\\
											 &\geq\frac{1}{2C'^2}\gamma^2 (\rho a)^{-2}\norm{\phi_k}_{X_r}^2,\\
											 &=\frac{1}{2C'^2}\gamma^2 (\rho a)^{-2} \geq \delta^2 >0, \ \forall k\in \mathbb{Z}.
\end{align*}
Now all the conditions (c1)-(c5) are satisfied. Hence by \cite[Main Theorem 1]{Chen1991}, we have that $A_r$ generates an exponentially stable semigroup $T_r(t)$.

Analogously, we have that $A_l=\mathcal{A}_l|_{\mathcal{N}(\mathcal{B}_l)}$ generates an exponentially stable semigroup $T_l(t),\ t>0$. Thus $A_b$ generates an exponentially stable semigroup $T_b(t)$ which completes the proof.
\end{proof}
\begin{lemma}\label{beamres}
Let $P_b(\cdot)=\mathcal{C}_bR(\cdot,A_{b_{-1}})B_b$ and $P_c(\cdot)=C_cR(\cdot,A_c)B_c$ be the transfer functions of the beam system $(\mathcal{A}_b,\mathcal{B}_b,\mathcal{C}_b)$ and the center rigid body $(A_c,B_c,C_c)$, respectively. Assume that $P_b(0)$ and $I+P_b(i\omega)P_c(i\omega)$, $\omega \in \mathbb{R}\backslash\{0\}$ are nonsingular. Then the operator $A$ in \eqref{operatorA} satisfies $i\mathbb{R}\subset \rho(A)$.
\end{lemma}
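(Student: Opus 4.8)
The plan is to prove that $i\omega - A$ is boundedly invertible for every $\omega\in\mathbb{R}$ by solving the resolvent equation explicitly. Fix $(f_b,f_c)\in X_b\times X_c$ and look for $(x_b,x_c)\in D(A)$ with $(i\omega-A)(x_b,x_c)^T=(f_b,f_c)^T$. Using the form of $A$ in \eqref{operatorA} together with the domain constraint $\mathcal{B}_bx_b=C_cx_c$, this is equivalent to the three relations
\[
(i\omega-\mathcal{A}_b)x_b=f_b,\qquad B_c\mathcal{C}_bx_b+i\omega x_c=f_c,\qquad \mathcal{B}_bx_b=C_cx_c.
\]
By Lemma \ref{lem.observability} we have $i\mathbb{R}\subset\rho(A_b)$, and by Remark \ref{rem.Bb} the control operator $B_b\in\mathcal{L}(U_b,X_{b_{-1}})$ is available; hence the boundary control system theory recalled in Remark \ref{unboundedB} shows that the first relation forces
\[
x_b=R(i\omega,A_b)f_b+R(i\omega,A_{b_{-1}})B_bu_b,\qquad u_b:=\mathcal{B}_bx_b\in U_b=\mathbb{R}^2,
\]
with $\mathcal{B}_b x_b = u_b$ consistent and $\mathcal{C}_bx_b=C_bR(i\omega,A_b)f_b+P_b(i\omega)u_b$, where $C_b=\mathcal{C}_b|_{\mathcal{N}(\mathcal{B}_b)}$.

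The next step is to eliminate $x_b$ and $x_c$ and obtain a finite-dimensional equation for the unknown $u_b$. For $\omega\neq0$, solve the second relation for $x_c=(i\omega)^{-1}(f_c-B_c\mathcal{C}_bx_b)$, substitute into $\mathcal{B}_bx_b=C_cx_c$, and use $P_c(i\omega)=C_cR(i\omega,A_c)B_c=(i\omega)^{-1}C_cB_c$ to arrive at
\[
\bigl(I+P_c(i\omega)P_b(i\omega)\bigr)u_b=C_cR(i\omega,A_c)f_c-P_c(i\omega)C_bR(i\omega,A_b)f_b,
\]
which has a unique solution $u_b$ by the assumed nonsingularity of $I+P_b(i\omega)P_c(i\omega)$ (equivalently of $I+P_c(i\omega)P_b(i\omega)$). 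The case $\omega=0$ is treated separately: the second relation becomes $B_c\mathcal{C}_bx_b=f_c$, and since $B_c$ is invertible and $P_b(0)$ is nonsingular by hypothesis, $u_b=P_b(0)^{-1}\bigl(B_c^{-1}f_c-C_bR(0,A_b)f_b\bigr)$ and $x_c=C_c^{-1}u_b$. In both cases $x_b$ and $x_c$ are then fully determined, and one checks that $x_b\in D(\mathcal{A}_b)$ with $\mathcal{B}_bx_b=u_b=C_cx_c$, so that $(x_b,x_c)\in D(A)$; solving the same system with $(f_b,f_c)=0$ forces $u_b=0$ and hence $x_b=x_c=0$.

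Finally, existence of a solution for every right-hand side gives surjectivity of $i\omega-A$, the zero-data computation gives injectivity, and boundedness of $(i\omega-A)^{-1}$ then follows since $A$ is closed (closed graph theorem), so $i\omega\in\rho(A)$ and therefore $i\mathbb{R}\subset\rho(A)$. As a byproduct, collecting the formulas for $x_b$ and $x_c$ yields an explicit $2\times2$ block expression for $R(i\omega,A)$, which will be the starting point for the uniform-boundedness estimate in the next step. The main obstacle here is not any deep spectral argument but the careful bookkeeping with the unbounded operator $B_b\in\mathcal{L}(U_b,X_{b_{-1}})$: one must justify that $R(i\omega,A_{b_{-1}})B_bu_b\in D(\mathcal{A}_b)$ and that $\mathcal{B}_b\bigl(R(i\omega,A_{b_{-1}})B_bu_b\bigr)=u_b$, relying on \cite[Ch. 10]{TucWei2009}, together with the separate handling of $\omega=0$ where the term $i\omega x_c$ degenerates and $R(i\omega,A_c)$ ceases to exist. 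The nonsingularity of $I+P_b(i\omega)P_c(i\omega)$, which would otherwise be the crux of the argument, is supplied as a hypothesis (and, as indicated above, is a consequence of the impedance passivity of the two subsystems).
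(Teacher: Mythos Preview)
Your proposal is correct and follows essentially the same approach as the paper: both solve the resolvent equation by writing $x_b=R(i\omega,A_b)f_b+R(i\omega,A_{b_{-1}})B_bu_b$, reduce to a finite-dimensional equation for the boundary input (the paper packages this via $S(i\omega)=[i\omega I_{X_c}+B_cP_b(i\omega)C_c]^{-1}$, you via $(I+P_c(i\omega)P_b(i\omega))^{-1}$, which is equivalent), handle $\omega=0$ separately through the invertibility of $P_b(0)$, and verify the domain condition using $\mathcal{B}_bR(i\omega,A_{b_{-1}})B_b=I$. The only cosmetic difference is that the paper proves injectivity first and surjectivity second, whereas you obtain both from the unique solvability of the reduced equation.
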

\begin{proof}
We will show that the operator $i\omega-A$  
is bijective. 
Let $\omega \in \mathbb{R}$ be arbitrary. We start by proving $i\omega-A$ is injective. Let $(x_b,x_c)^T \in D(A)=\{(x_b,x_c) \in D(\mathcal{A}_b)\times X_c\  :\  \mathcal{B}_bx_b= C_c x_c\}$ be such that $(i\omega-A)(x_b,x_c)^T=0$. Then by using the structure of $A$, we obtain 
\begin{align*}
\begin{bmatrix} (i\omega-\mathcal{A}_b)x_b \\ B_c\mathcal{C}_bx_b+i\omega x_c\end{bmatrix}=0.
\end{align*}
We have from Lemma \ref{lem.observability} that $i\mathbb{R}\subset \rho(A_b)$. By using Remark \ref{rem.Bb}, solving the above equation, we obtain 
\begin{equation}\label{injective}
\begin{aligned}
x_b=R(i\omega,A_{b_{-1}})B_bC_c x_c,\\
[i\omega I_{X_c}+B_c P_b(i\omega)C_c]x_c=0.
\end{aligned}
\end{equation}
We have that $B_c$, $C_c$ are nonsingular and $P_b(0)$ and $I+P_b(i\omega)P_c(i\omega)$ are assumed to be nonsingular. 
Therefore, the function
\begin{equation}\label{schcom}
\begin{aligned}
S(i\omega) =\begin{cases} \frac{1}{i\omega}+\frac{1}{\omega^2} B_c P_b(i\omega) (I+P_b(i\omega)P_c(i\omega))^{-1} C_c,\ \omega \in \mathbb{R}\backslash\{0\},\\
	(B_c P_b(0)C_c)^{-1},\ \omega=0\end{cases}
\end{aligned}
\end{equation}
is well-defined for all $\omega\in\mathbb{R}$.
A direct computation shows that $S(i\omega) = [i\omega I_{X_c}+B_c P_b(i\omega)C_c]^{-1}$ for all $\omega\in \mathbb{R}$. This implies by \eqref{injective} that $(x_b,x_c)=0$. Thus, the operator $i\omega-A$ is injective.

Now it remains to prove that $i\omega-A$ is surjective. For all $f_b\in X_b$ and $f_c \in X_c$, our aim is to find $(x_b,x_c)^T \in D(A)$ such that
\begin{equation} \label{reseq1}
\begin{aligned}
\begin{bmatrix} f_b \\ f_c \end{bmatrix}&=(i\omega-A)\begin{bmatrix} x_b  \\ x_c\end{bmatrix}=\begin{bmatrix} (i\omega-\mathcal{A}_b)x_b \\ B_c\mathcal{C}_bx_b+i\omega x_c\end{bmatrix}.
\end{aligned}
\end{equation} 
Since $i\mathbb{R}\subset \rho(A_b)$, using Remark \ref{rem.Bb}, the solution of (\ref{reseq1}) is given by
\begin{equation}\label{resderivation}
\begin{aligned}
x_b  &=[R(i\omega,A_b)-R(i\omega,A_{b_{-1}})B_b C_cS(i\omega)B_c C_b R(i\omega,A_b)]f_b \\ &\quad +R(i\omega,A_{b_{-1}})B_b C_cS(i\omega) f_c\\
	x_c &=-S(i\omega)B_cC_bR(i\omega,A_b)f_b +S(i\omega)f_c
\end{aligned}
\end{equation}
where $C_b=\mathcal{C}_b|_{\mathcal{N}(\mathcal{B}_b)}$. Moreover, for $(x_b,x_c) \in D(\mathcal{A}_b) \times X_c$, we have
\begin{align*}
&\begin{bmatrix}\mathcal{B}_b & -C_c\end{bmatrix}\begin{bmatrix}x_b \\ x_c\end{bmatrix}\\ & =\mathcal{B}_bR(i\omega,A_b)f_b-\mathcal{B}_bR(i\omega,A_{b_{-1}})B_bC_c S(i\omega)B_c C_b R(i\omega,A_b)f_b \\ &+\mathcal{B}_bR(i\omega,A_{b_{-1}})B_bC_c S(i\omega)f_c +C_cS(i\omega)B_c C_bR(i\omega,A_b)f_b-C_cS(i\omega)f_c \\
&=0
\end{align*}
since $\mathcal{B}_bR(i\omega,A_{b_{-1}})B_b=I$ \cite[Prop. 10.1.2]{JacZwa2011} and $\mathcal{R}(R(i\omega,A_b))\subset D(A_b)$. Thus $(x_b,x_c)^T \in D(A)$. This implies that the operator $i\omega-A$ is surjective. Thus $i\omega-A,\ \omega\in \mathbb{R}$ has a bounded inverse, which completes the proof.
\end{proof} 
\begin{remark}
From the equation \eqref{resderivation} in Lemma \ref{beamres}, the resolvent operator $R(i\omega,A)$ has the form
\begin{equation}\label{reseqn}
R(i\omega,A)=\begin{bmatrix} R_{11}(i\omega) &  R_{12}(i\omega)\\ R_{21}(i\omega)  & R_{22}(i\omega) \end{bmatrix}
\end{equation}
where
\begin{equation}\label{subreseqn}
\begin{aligned}
 R_{11}(i\omega) &=R(i\omega,A_b)-R(i\omega,A_{b_{-1}})B_bC_c S(i\omega)B_c C_b R(i\omega,A_b),\\
R_{12}(i\omega)&=R(i\omega,A_{b_{-1}})B_bC_c S(i\omega),\\
R_{21}(i\omega)&=-S(i\omega)B_c C_bR(i\omega,A_b),\\
R_{22}(i\omega)&=S(i\omega).
\end{aligned}
\end{equation}
\end{remark}
In the following we derive an upper bound for the transfer function $P_b(i\omega)$ and upper bounds for the operators $R(i\omega,A_{b_{-1}})B_b$, $C_b R(i\omega,A_b)$ and $(I+P_b(i\omega)P_c(i\omega))^{-1}$. 
\begin{lemma}\label{lemtf}
Let $P_b(\cdot)$ be the transfer function of the beam system $(\mathcal{A}_b, \mathcal{B}_b, \mathcal{C}_b)$. Then there exists $M>0$ such that $\|P_b(i\omega)\| \leq M(|\omega|+1)$ for all $\omega \in \mathbb{R}$. Moreover, $P_b(0)$ is nonsingular.
\end{lemma}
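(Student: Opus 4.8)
The plan is to compute the transfer function $P_b(i\omega)$ explicitly and then estimate it directly. First I would note that $\mathcal{A}_b=\diag(\mathcal{A}_l,\mathcal{A}_r)$ while on $D(\mathcal{A}_b)$ one has $\mathcal{B}_lx_l=\mathcal{B}_rx_r$ and $\mathcal{B}_b=\tfrac12[\mathcal{B}_l\ \mathcal{B}_r]$, so solving $(i\omega-\mathcal{A}_b)x=0$, $\mathcal{B}_bx=u$ amounts to solving the two beam resolvent problems with the common boundary input $u$; hence $P_b(s)=P_l(s)+P_r(s)$, and by the reflection $\xi\mapsto-\xi$ it suffices to bound $P_r(i\omega)$. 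For $P_r$, eliminating the first state component in $(sI-\mathcal{A}_r)x_r=0$, $\mathcal{B}_rx_r=u_r$ reduces the problem to the constant-coefficient equation $q''''=\mu(s)^4q$ on $[0,1]$ for $q=w_r''$, with $\mu(s)^4=-\tfrac{s(\rho a s+\gamma)}{EI}$, the free-end conditions $q(1)=q'(1)=0$, and $q''(0),q'''(0)$ proportional (with factor $-\tfrac{\rho a s+\gamma}{EI}$) to $u_{r1},u_{r2}$, while $\mathcal{C}_rx_r=(EIq'(0),-EIq(0))^{T}$. Writing $q$ in the basis $\{\cosh\mu\xi,\sinh\mu\xi,\cos\mu\xi,\sin\mu\xi\}$ and imposing the free-end conditions, a direct computation gives $P_r(i\omega)$ (with $\mu=\mu(i\omega)$) as a symmetric $2\times2$ matrix equal to $\tfrac{\rho a i\omega+\gamma}{\Delta(\mu)}$ times a matrix whose entries are explicit combinations of $\cosh\mu\sin\mu$, $\sinh\mu\cos\mu$, $\sinh\mu\sin\mu$ divided by $\mu^{d}$ with $d\in\{1,2,3\}$, where $\Delta(\mu)=2(1+\cosh\mu\cos\mu)$ is the clamped--free characteristic function; note $\Delta(\mu(i\omega))\neq0$ for all $\omega$, since a zero would make $i\omega\in\sigma(A_r)$, contradicting the exponential stability of $T_r(t)$ in Lemma~\ref{lem.observability}.

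Next I would analyze the curve $\mu(i\omega)$. Since $\mu(i\omega)^4=(\rho a\omega^2-i\gamma\omega)/EI$, choosing the branch with $\re\mu(i\omega)>0$ gives $\nu:=\re\mu(i\omega)\asymp|\omega|^{1/2}\to\infty$ and $\eta:=\im\mu(i\omega)\to0$ with $|\eta|\asymp1/\nu$ (so $\nu|\eta|$ stays between two positive constants). Along this curve $|\cos\mu|,|\sin\mu|=O(1)$ and $|\cosh\mu|,|\sinh\mu|\sim\tfrac12e^{\nu}$, hence $\Delta(\mu)=2+e^{\mu}\cos\mu+O(e^{-\nu})$; expanding $e^{\mu}\cos\mu$ for small $\eta$ yields $\re\Delta=2+e^{\nu}\cos\nu+O(e^{\nu}\eta^{2})+O(e^{-\nu})$ and $\im\Delta=e^{\nu}\eta(\cos\nu-\sin\nu)+O(e^{\nu}\eta^{2})+O(e^{-\nu})$. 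The key estimate is $|\Delta(\mu(i\omega))|\gtrsim e^{\nu}/\nu$ for $|\omega|$ large, which I would prove by cases on the real number $\nu$: if $|\cos\nu|\geq1/\nu$ then $e^{\nu}|\cos\nu|\geq e^{\nu}/\nu$ dominates the error terms (using $e^{\nu}\eta^{2}=o(e^{\nu}/\nu)$), so $|\re\Delta|\gtrsim e^{\nu}/\nu$; if $|\cos\nu|<1/\nu$ then $|\sin\nu|\geq\tfrac12$, so $|\cos\nu-\sin\nu|\geq\tfrac14$ and $|\im\Delta|\gtrsim e^{\nu}|\eta|\gtrsim e^{\nu}/\nu$.

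With this in hand the estimate follows quickly: each numerator combination is $O(e^{\nu})$ along the curve, so each entry of $P_r(i\omega)$ is $\lesssim\dfrac{|\rho a i\omega+\gamma|\,e^{\nu}}{|\mu|^{d}(e^{\nu}/\nu)}\lesssim(|\omega|+1)\dfrac{\nu}{|\mu|^{d}}$, which is $\lesssim|\omega|+1$ because $\nu\asymp|\mu|\asymp|\omega|^{1/2}$ and $d\geq1$; hence $\|P_r(i\omega)\|\lesssim|\omega|+1$ for $|\omega|$ large. On a bounded $\omega$-interval the apparent singularities at $\mu=0$ are removable (the numerators vanish at $\mu=0$ to the orders $d$), so $P_r$ is continuous on $i\R$ and bounded on compact sets; combining the two ranges gives $\|P_r(i\omega)\|\leq M(|\omega|+1)$ on $\R$, and likewise for $P_b=P_l+P_r$. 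For the last claim, at $s=0$ the equation is $q''''=0$, so $q$ is cubic and a one-line computation gives $P_r(0)=\gamma\pmat{1 & 1/2\\ 1/2 & 1/3}$ and $P_l(0)=\gamma\pmat{1 & -1/2\\ -1/2 & 1/3}$, whence $P_b(0)=2\gamma\,\diag(1,\tfrac13)$ is nonsingular (equivalently, impedance passivity of the beam system gives $P_b(0)=P_b(0)^{*}\succeq0$, and $P_b(0)\succeq P_r(0)\succ0$ from the explicit value).

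I expect the only real obstacle to be the lower bound $|\Delta(\mu(i\omega))|\gtrsim e^{\nu}/\nu$: it is not enough to know that the spectral curve $\mu(i\omega)$ avoids the (real) zeros $\mu_k\approx(k-\tfrac12)\pi$ of $\Delta$ — one needs the precise rate $|\im\mu(i\omega)|\asymp1/\re\mu(i\omega)$ at which its distance to those zeros decays, and this is exactly where the viscous damping $\gamma>0$ enters. It is also what forces $P_b(i\omega)$ to grow linearly (rather than boundedly) in $\omega$, so the bound $M(|\omega|+1)$ is sharp in its order.
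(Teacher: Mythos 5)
Your proposal is correct and follows essentially the same route as the paper: the same decomposition $P_b=P_l+P_r$, the same explicit clamped--free beam solution with characteristic denominator $2(1+\cosh\mu\cos\mu)$, the same asymptotics $\re\mu(i\omega)\asymp|\omega|^{1/2}$, $\im\mu(i\omega)\asymp|\omega|^{-1/2}$ supplied by the damping, and the same explicit values $P_r(0),P_l(0)$ giving $P_b(0)=\gamma\,\diag(2,\tfrac23)$. The only (cosmetic) difference is that you lower-bound $|\Delta(\mu(i\omega))|\gtrsim e^{\nu}/\nu$ directly by a dichotomy on $|\cos\nu|$, whereas the paper reaches the equivalent estimate by bounding the ratios $|C_{2,\omega}|,|C_{3,\omega}|\lesssim\sqrt{|\omega|}$ through $|\tan(\alpha(\omega))|\leq|\coth(y_\omega)|+|\tanh(y_\omega)|\lesssim\sqrt{|\omega|}$.
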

\begin{proof}
For $u_b\in U_b$, the transfer function of the beam system $(\mathcal{A}_b, \mathcal{B}_b, \mathcal{C}_b)$ is given by
\begin{align*}
P_b(i\omega)u_b=P_l(i\omega)u_l+P_r(i\omega)u_r,\ \omega\in \mathbb{R},
\end{align*}
where $P_l(i\omega)$ and $P_r(i\omega)$ are the transfer functions of the left and the right beam systems, respectively, and we will now derive an explicit expression for them. For $u_r\in U_r$, the transfer function $P_r(i\omega)$ of the right beam system $(\mathcal{A}_r, \mathcal{B}_r, \mathcal{C}_r)$ can be obtained as the unique solution of 
\begin{align*}
(i\omega-\mathcal{A}_r) x_r = 0,\\
 \mathcal{B}_rx_r=u_r \\
P_r(i\omega)u_r=\mathcal{C}_rx_r
\end{align*}
with $x_r \in D(\mathcal{A}_r)=\{x_r=(f_r,g_r)^T \in X_r \ |\  \mathcal{H}_rx_r \in H^2([0,1];\mathbb{R}^2),\  g_r(1)=g_r^{\prime}(1)=0\}$ (\cite[Thm. 12.1.3]{JacZwa2011}). Replacing the operators with the corresponding expressions, the above equations can be written as 
\begin{equation}\label{eqbvp}
\begin{aligned}
 (i\omega+\gamma (\rho a)^{-1})f_r+EI g_r^{\prime\prime}=0, \\
-(\rho a)^{-1}f_r^{\prime\prime}+i\omega g_r =0 ,\\
 f_r(0)= \rho a\  u_{r1}, \ f_r'(0)=\rho a\ u_{r2},\\
EI g_r(1) = 0,\ EI g_r'(1)=0,\\
P_r(i\omega)u_r=EI\begin{bmatrix} g_r'(0) \\ -g_r(0) \end{bmatrix}.
\end{aligned}
\end{equation}
We consider the case $\omega=0$ separately. Solving (\ref{eqbvp}) for $\omega=0$, we obtain
\begin{align*}
f_r(\xi)&=\rho a (u_{r1}+\xi u_{r2}),\\
g_r(\xi)&=\frac{\gamma}{EI}\bigg[\bigg(\frac{-\xi^2}{2}+\xi-\frac{1}{2}\bigg)u_{r1}+\bigg(\frac{-\xi^3}{6}+\frac{\xi}{2}-\frac{1}{3}\bigg)u_{r2}\bigg]
\end{align*}
and therefore $P_r(0)$ is given by
\begin{align*}
P_r(0)u_r=EI\begin{bmatrix} g_r'(0) \\ -g_r(0) \end{bmatrix}=\gamma \begin{bmatrix} 1 & \frac{1}{2} \\ \frac{1}{2} & \frac{1}{3}  \end{bmatrix}\begin{bmatrix} u_{r1} \\ u_{r2}\end{bmatrix}.
\end{align*}
Similarly, we obtain that $P_l(0)$ is given by
\begin{align*}
P_l(0)u_l= EI\begin{bmatrix} -g_l'(0) \\ g_l(0) \end{bmatrix}=\gamma \begin{bmatrix} 1 & \frac{-1}{2} \\ \frac{-1}{2} & \frac{1}{3}  \end{bmatrix}\begin{bmatrix} u_{l1} \\ u_{l2}\end{bmatrix}.
\end{align*}
Now using the boundary conditions $u_{l1}=u_{r1}, u_{l2}=u_{r2}$, the transfer function of the combined beam system is given by,
\begin{equation}\label{beamtf}
P_b(0)u_b=EI\begin{bmatrix} g_r'(0)-g_l'(0) \\ -g_r(0)+g_l(0) \end{bmatrix}=\gamma \begin{bmatrix} 2 & 0 \\ 0 & \frac{2}{3}  \end{bmatrix}u_b.
\end{equation}
Thus $P_b(0)$ is indeed nonsingular.
For $\omega \in \mathbb{R}\backslash\{0\}$, solving (\ref{eqbvp}), we obtain
 \begin{align*}
f_r(\xi)&=\rho a\bigg[(C_{1,\omega}u_{r1} -\frac{C_{3,\omega}}{\alpha(\omega)}u_{r2})f_1(\xi)+(C_{2,\omega}u_{r1}+\frac{C_{4,\omega}}{\alpha(\omega)}u_{r2})f_2(\xi) \\ & \qquad \qquad + \cos{(\alpha(\omega) \xi)}u_{r1}+ \frac{\sin{(\alpha(\omega) \xi)}}{\alpha(\omega)} u_{r2}\bigg],\\
g_r(\xi)&=\frac{{\alpha(\omega)^2}}{i\omega} \bigg[( C_{1,\omega}u_{r1} -\frac{C_{3,\omega}}{\alpha(\omega)}u_{r2})g_1(\xi)+( C_{2,\omega}u_{r1}+\frac{C_{4,\omega}}{\alpha(\omega)}u_{r2})g_2(\xi) \\ & \qquad \qquad - \cos{(\alpha(\omega) \xi)}u_{r1}-\frac{\sin{(\alpha(\omega) \xi)}}{\alpha(\omega)} u_{r2}\bigg],
\end{align*}
where
\begin{equation}\label{eq.C2C3}
\begin{aligned}
C_{1,\omega}&=\frac{C_1(\omega)}{C_2(\omega)},\
C_{2,\omega}=\frac{C_2(\omega)\cos{(\alpha(\omega))}-C_1(\omega)C_5(\omega)}{C_2(\omega)C_3(\omega)},\\
C_{3,\omega}&=\frac{C_4(\omega)}{C_2(\omega)},\
C_{4,\omega}=\frac{C_2(\omega) \sin{(\alpha(\omega))}+C_4(\omega)C_5(\omega)}{C_2(\omega)C_3(\omega)},
\end{aligned}
\end{equation}
\begin{align*}
f_1(\xi)&=\cosh{(\alpha(\omega) \xi)}-\cos{(\alpha(\omega) \xi}),\ f_2(\xi)=\sinh{(\alpha(\omega) \xi)}-\sin{(\alpha(\omega) \xi}),\\
g_1(\xi)&=\cosh{(\alpha(\omega) \xi)}+\cos{(\alpha(\omega) \xi}),\ g_2(\xi)=\sinh{(\alpha(\omega) \xi)}+\sin{(\alpha(\omega) \xi})
\end{align*} and
\begin{align*}
C_1(\omega)&=1 +\cos{(\alpha(\omega))}\cosh{(\alpha(\omega))}+\sin{(\alpha(\omega))}\sinh{(\alpha(\omega))},\\
C_2(\omega)&=2+2 \cosh{(\alpha(\omega))}\cos{(\alpha(\omega))},\\
C_3(\omega)&=\sinh{(\alpha(\omega))}+\sin{(\alpha(\omega))},\\
C_4(\omega)&=\cos{(\alpha(\omega))}\sinh{(\alpha(\omega))}-\sin{(\alpha(\omega))}\cosh{(\alpha(\omega))},\\
C_5(\omega)&=\cosh{(\alpha(\omega))}+\cos{(\alpha(\omega))},\\
\alpha(\omega)&= \bigg(\frac{\rho a}{EI} \bigg)^{\frac{1}{4}}(\omega^2-i\gamma (\rho a)^{-1}\omega)^{\frac{1}{4}}.
\end{align*}
Therefore, the transfer function of the right beam can be written as,
\begin{align*}
P_r(i\omega)u_r=EI\begin{bmatrix} g_r'(0) \\ -g_r(0) \end{bmatrix}, \quad \omega \in \mathbb{R} \backslash \{0\}
\end{align*}
where
\begin{align*}
g_r'(0)&= 2\frac{{\alpha(\omega)}^3}{i\omega} C_{2,\omega}u_{r1} + \frac{{\alpha(\omega)}^2}{i\omega} (2C_{4,\omega}-1) u_{r2},\\
g_r(0)&= \frac{{\alpha(\omega)}^2}{i\omega} (2C_{1,\omega}-1)u_{r1} -2 \frac{{\alpha(\omega)}}{i\omega} C_{3,\omega} u_{r2}.
\end{align*} \normalsize
In the same way, we can obtain the transfer function of the left beam which is given by,
\begin{align*}
P_l(i\omega)u_l=EI\begin{bmatrix} -g_l'(0) \\ g_l(0) \end{bmatrix},\quad \omega \in \mathbb{R} \backslash \{0\}
\end{align*}
where
\begin{align*}
g_l'(0)&= -2\frac{{\alpha(\omega)}^3}{i\omega} C_{2,\omega}u_{l1} + \frac{{\alpha(\omega)}^2}{i\omega} (2C_{4,\omega}-1) u_{l2},\\
g_l(0)&= \frac{{\alpha(\omega)}^2}{i\omega} (2C_{1,\omega}-1)u_{l1} + 2\frac{{\alpha(\omega)}}{i\omega}C_{3,\omega} u_{l2}.
\end{align*} \normalsize
Thus, the transfer function of the combined beam system is given by,
\begin{align}\label{beamtf1}
P_b(i\omega)u_b=4EI\frac{{\alpha(\omega)}}{i\omega}\begin{bmatrix} \alpha(\omega)^2 C_{2,\omega} & 0 \\ 0 &  C_{3,\omega}\end{bmatrix} u_b,\quad \omega \in \mathbb{R} \backslash \{0\}.
\end{align}  
Now, let us estimate the absolute values of $C_{2,\omega}$ and $C_{3,\omega}$ which contain trigonometric and hyperbolic terms.
Writing $\alpha(\omega)$ in terms of its real and imaginary parts, we obtain
\begin{equation}\label{eq.alpha}
\begin{aligned}
\alpha(\omega) &= \bigg(\frac{\rho a}{EI} \bigg)^{\frac{1}{4}}(\omega^2-i\gamma (\rho a)^{-1}\omega)^{\frac{1}{4}},\\
       &= |\alpha(\omega)|\bigg(\cos{\bigg(\frac{\theta(\omega)+2\pi k}{4}\bigg)}+i \sin{\bigg(\frac{\theta(\omega)+2\pi k}{4}\bigg)}\bigg),\ k=0,1,2,3,
\end{aligned}
\end{equation}
where
\begin{align*}
|\alpha(\omega)|&=\bigg(\frac{\rho a}{EI}|\omega|\sqrt{\omega^2+\gamma^2(\rho a)^{-2}}\bigg)^{\frac{1}{4}},\\
\theta(\omega) &= \tan^{-1}\bigg(\frac{-\gamma (\rho a)^{-1}}{\omega}\bigg).
\end{align*}
We have 
\begin{align*}
\text{Re}(\alpha(\omega))&= \pm |\alpha(\omega)| \cos{\bigg(\frac{\theta(\omega)}{4}\bigg)} \ \text{or}\ \text{Re}(\alpha(\omega))=\mp |\alpha(\omega)| \sin{\bigg(\frac{\theta(\omega)}{4}\bigg)},\\
\text{Im}(\alpha(\omega))&= \pm |\alpha(\omega)| \sin{\bigg(\frac{\theta(\omega)}{4}\bigg)} \ \text{or}\ \text{Im}(\alpha(\omega))=\pm |\alpha(\omega)| \cos{\bigg(\frac{\theta(\omega)}{4}\bigg)}.
\end{align*}
In addition, there exists $\omega_1 \geq \gamma (\rho a)^{-1}>0$ such that $\tan^{-1}(\frac{\gamma (\rho a)^{-1}}{|\omega|})\leq \frac{\gamma (\rho a)^{-1}}{|\omega|}$ for all $|\omega|>\omega_1$. 
Therefore, there exist $M_1, M_2, M_3, M_4>0$ and $\omega_2>\omega_1$ such that 
\begin{align*}
M_1 \sqrt{|\omega|}\leq |\alpha(\omega)|\bigg|\cos{\bigg(\frac{\theta(\omega)}{4}\bigg)}\bigg|&\leq M_2 \sqrt{|\omega|}\\
M_3 \frac{1}{\sqrt{|\omega|}}\leq|\alpha(\omega)|\bigg|\sin{\bigg(\frac{\theta(\omega)}{4}\bigg)}\bigg|&\leq M_4 \frac{1}{\sqrt{|\omega|}} 
\end{align*}
for all $|\omega|\geq \omega_2$. Denoting $x_{\omega}=\re(\alpha(\omega))$ and $y_{\omega}=\im(\alpha(\omega))$, the above estimates imply that when $|x_{\omega}|$ grows at a rate of $\sqrt{|\omega|}$, $|y_{\omega}|$ decays at a rate of $\frac{1}{\sqrt{|\omega|}}$ or the other way around. 
Since $\abs{C_{2,\omega}}$ and $\abs{C_{3,\omega}}$ have similar terms for all the four roots of $\alpha(\omega)$, we restrict our analysis to the principal branch of the fourth root of $\alpha(\omega)$ and note that the other branches can be treated similarly. 

 The definition of $\alpha(\omega)$ and straightforward estimates can be used to verify that $\abs{\cosh(\alpha(\omega))\cos(\alpha(\omega))}\to \infty$ as $\abs{\omega}\to \infty$. Therefore, there exists $\omega_0>\omega_2$ such that
\begin{equation}\label{eq.coscosh}
|\cosh{\alpha(\omega)}\cos{\alpha(\omega)}|\geq 2 
\end{equation} for all $|\omega|\geq \omega_0$ and this further implies that 
\begin{equation}\label{eq.bd}
\begin{split}
\bigg|\frac{\sin{(\alpha(\omega))}\sinh{(\alpha(\omega))}}{1+\cos{(\alpha(\omega))}\cosh{(\alpha(\omega))}}\bigg|
&\leq\frac{\abs{\sin{(\alpha(\omega))}\sinh{(\alpha(\omega))}}}{\abs{\cos{(\alpha(\omega))}\cosh{(\alpha(\omega))}}-1}\\
&\leq 2\abs{\tan{(\alpha(\omega))}}\abs{\tanh{(\alpha(\omega))}}\\
&\leq 2(\abs{\coth{(y_{\omega})}}+\abs{\tanh{(y_{\omega})}})\\ & \qquad (\abs{\tanh{(x_{\omega})}}+\abs{\coth{(x_{\omega})}} )
\end{split}
\end{equation}
where  
the last inequality is obtained by separating real and imaginary parts of the second inequality and using straightforward estimates. Here we note that $\abs{\tanh{(x_{\omega})}}$, $\abs{\coth{(x_{\omega})}}$ and $\abs{\tanh{(y_{\omega})}}$ are all uniformly bounded for $|\omega|\geq \omega_0$ and since $|y_{\omega}|$ decays at a rate of $\frac{1}{\sqrt{|\omega|}}$, using Taylor series, we can estimate, there exists $M_0'>0$ such that
\begin{align*}
|\coth{(y_{\omega})}|&=\bigg|y_{\omega}^{-1}+\frac{y_{\omega}}{3}-\frac{y_{\omega}^3}{45}+\cdots\bigg|
                          \leq M_0' \sqrt{|\omega|} 
\end{align*}
for $|\omega|\geq \omega_0$.  
Therefore, from \eqref{eq.bd}, we obtain 
\begin{equation} \label{ub1}
\bigg|\frac{\sinh{(\alpha(\omega))}\sin{(\alpha(\omega))}}{1+\cosh{(\alpha(\omega))}\cos{(\alpha(\omega))}}\bigg| 
\lesssim \sqrt{|\omega|}
\end{equation} for $|\omega|\geq \omega_0$. 
Moreover, $\abs{\frac{\sin{(x_{\omega})}\cosh{(y_{\omega})}}{\sinh{(x_{\omega})}\cos{(y_{\omega})}}}\to 0$ as $|\omega| \to \infty$. This implies that there exists $M_0^{\prime\prime}>0$ such that
\begin{align*}
\abs{\sinh{(\alpha(\omega))}+\sin{(\alpha(\omega))}}&\geq \abs{\re(\sinh{(\alpha(\omega))}+\sin{(\alpha(\omega))})} \\
                                &= \abs{\sinh{(x_{\omega})}\cos{(y_{\omega})}+\sin{(x_{\omega})}\cosh{(y_{\omega})}}\\
																&=\abs{\sinh{(x_{\omega})}\cos{(y_{\omega})}}\bigg|1+\frac{\sin{(x_{\omega})}\cosh{(y_{\omega})}}{\sinh{(x_{\omega})}\cos{(y_{\omega})}}\bigg|\\
																&\geq M_0^{\prime\prime} \abs{\sinh{(x_{\omega})}\cos{(y_{\omega})}} 
\end{align*} for $|\omega|\geq \omega_0$.  
Since $|\cos{(\alpha(\omega))}|$ and $|\tan{(y_{\omega})}|$ are uniformly bounded for $|\omega|\geq \omega_0$, the above estimate implies that there exist $M_1'>0$ and $M_1^{\prime\prime}>0$ such that
\begin{equation}\label{ub2}
\bigg|\frac{\cos{(\alpha(\omega))}}{\sinh{(\alpha(\omega))}+\sin{(\alpha(\omega))}}\bigg|
																			\leq M_1'\qquad \text{and} 
\end{equation}
  
\begin{equation}\label{ub3}
\begin{split}
\bigg|\frac{\cosh{(\alpha(\omega))}}{\sinh{(\alpha(\omega))}+\sin{(\alpha(\omega))}}\bigg|
                      &\leq \frac{1}{M_0^{\prime\prime}} \bigg|\frac{\cosh{(x_{\omega})}\cos{(y_{\omega})}+i\sinh{(x_{\omega})}\sin{(y_{\omega})}}{\sinh{(x_{\omega})}\cos{(y_{\omega})}}\bigg|\\ 
										 &\leq \frac{1}{M_0^{\prime\prime}} [|\coth{(x_{\omega})}|+|\tan{(y_{\omega})}|] 
										 \leq M_1^{\prime\prime}
\end{split}
\end{equation} for $|\omega|\geq \omega_0$. We note that $\abs{\coth{(x_{\omega})}}$ is uniformly bounded for $|\omega|\geq \omega_0$.
Using the estimates \eqref{ub1}, \eqref{ub2} and \eqref{ub3}, from \eqref{eq.C2C3}, we obtain 
\begin{equation}\label{eq.C2}
\begin{split}
\abs{C_{2,\omega}} &\leq \bigg|\frac{\cos{(\alpha(\omega))}}{\sinh{(\alpha(\omega))}+\sin{(\alpha(\omega))}}\bigg|\\ 
																	                       & \qquad+\bigg|\frac{1}{2}+\frac{\sin{(\alpha(\omega))}\sinh{(\alpha(\omega))}}{2+2 \cosh{(\alpha(\omega))}\cos{(\alpha(\omega))}}\bigg|  \bigg|\frac{\cosh{(\alpha(\omega))}+\cos{(\alpha(\omega))}}{\sinh{(\alpha(\omega))}+\sin{(\alpha(\omega))}}\bigg|\\
																												&\lesssim \sqrt{|\omega|}
\end{split}
\end{equation} 
for $|\omega|\geq \omega_0$. 
\normalsize
 Again using \eqref{eq.coscosh}, from \eqref{eq.C2C3}, we can estimate
\begin{align*}
\abs{C_{3,\omega}}&=\bigg|\frac{\cos{(\alpha(\omega))}\sinh{(\alpha(\omega))}-\sin{(\alpha(\omega))}\cosh{(\alpha(\omega))}}{2+2 \cosh{(\alpha(\omega))}\cos{(\alpha(\omega))}}\bigg|\\
                  &\leq \bigg|\frac{\cos{(\alpha(\omega))}\sinh{(\alpha(\omega))}-\sin{(\alpha(\omega))}\cosh{(\alpha(\omega))}}{\cosh{(\alpha(\omega))}\cos{(\alpha(\omega))}}\bigg|\\
										 &\leq |\tanh{(\alpha(\omega))}|+|\tan{(\alpha(\omega))}|\\
										 &\leq \abs{\tanh{(x_{\omega})}}+\abs{\coth{(x_{\omega})}}+\abs{\coth{(y_{\omega})}}+\abs{\tanh{(y_{\omega})}}.
\end{align*} Since $\abs{\tanh{(x_{\omega})}}$, $\abs{\coth{(x_{\omega})}}$ and $\abs{\tanh{(y_{\omega})}}$ are uniformly bounded and $\abs{\coth{(y_{\omega})}}\leq M_0'\sqrt{|\omega|}$  for $|\omega|\geq\omega_0$, we have  
\begin{equation}\label{eq.C3}\abs{C_{3,\omega}}\lesssim \sqrt{|\omega|}\end{equation} for $|\omega|\geq \omega_0$.
Finally, from the estimates \eqref{eq.C2}, \eqref{eq.C3} and from equation (\ref{beamtf1}) we obtain
\begin{align*}
\|P_b(i\omega)u_b\|^2 &\lesssim 16(EI)^2\bigg[\frac{{|\alpha(\omega)|}^6}{|\omega|^2} |\omega| |u_{b1}|^2+ \frac{{|\alpha(\omega)|^2}}{|\omega|^2} |\omega||u_{b2}|^2\bigg],\\
                    &\lesssim (|\omega|+1)^2 |u_b|^2 
\end{align*} for $|\omega|\geq \omega_0$. 
Hence $\|P_b(i\omega)\| \lesssim |\omega|+1$ for all $|\omega|\geq \omega_0$.  Finally, by the continuity of the transfer function $P_b(\cdot)$ on $i\mathbb{R}$, we conclude that $\|P_b(i\omega)\| \lesssim |\omega|+1$ for all $\omega \in \mathbb{R}$.
\end{proof}
\begin{lemma}\label{corresolbd1}
There exists $C'>0$ such that $\norm{R(i\omega,A_{b_{-1}})B_b}\leq C'\sqrt{|\omega|+1}$ for all $\omega \in \mathbb{R}$. Moreover, $I+P_b(i\omega)P_c(i\omega)$ is nonsingular for all $\omega \in \mathbb{R}\backslash \{0\}$.
\end{lemma}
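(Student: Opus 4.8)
The plan is to prove the two assertions separately; the resolvent estimate is the substantial part, and the exponential cancellation described below is the step I expect to be the main obstacle.

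\emph{The bound on $\norm{R(i\omega,A_{b_{-1}})B_b}$.} As in the proof of Lemma~\ref{beamres}, for $u_b\in U_b$ the element $x_b=R(i\omega,A_{b_{-1}})B_bu_b$ solves $(i\omega-\mathcal{A}_b)x_b=0$, $\mathcal{B}_bx_b=u_b$. Since the coupling in $D(\mathcal{A}_b)$ is $\mathcal{B}_lx_l=\mathcal{B}_rx_r$ and $\mathcal{B}_b=\tfrac12[\mathcal{B}_l\ \mathcal{B}_r]$, this forces $\mathcal{B}_lx_l=\mathcal{B}_rx_r=u_b$, so $x_b=(x_l,x_r)^T$ with $x_l=R(i\omega,A_{l_{-1}})B_lu_b$, $x_r=R(i\omega,A_{r_{-1}})B_ru_b$, and $\norm{R(i\omega,A_{b_{-1}})B_bu_b}_{X_b}^2=\norm{x_l}_{X_l}^2+\norm{x_r}_{X_r}^2$. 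It therefore suffices to bound $\norm{x_r}_{X_r}$ by a constant times $\sqrt{|\omega|+1}\,\abs{u_b}$, the left beam being symmetric. Now $x_r=(f_r,g_r)^T$ is exactly the explicit solution of the two-point boundary value problem \eqref{eqbvp} written down in the proof of Lemma~\ref{lemtf}, $g_r=(\rho a)^{-1}(i\omega)^{-1}f_r''$, and $\norm{x_r}_{X_r}^2=(\rho a)^{-1}\norm{f_r}_{L^2(0,1)}^2+EI\norm{g_r}_{L^2(0,1)}^2$, so everything reduces to estimating the $L^2(0,1)$-norms of $f_r$ and $f_r''$ in terms of $u_b$ and $\omega$.

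\emph{The cancellation.} Write $f_r(\xi)=A\cosh(\alpha(\omega)\xi)+B\sinh(\alpha(\omega)\xi)+C\cos(\alpha(\omega)\xi)+D\sin(\alpha(\omega)\xi)$, with $A,B,C,D$ linear in $u_b$ and determined by the four conditions in \eqref{eqbvp}, and put $a=\re\alpha(\omega)$, $b=\im\alpha(\omega)$; on the principal branch (treated as in Lemma~\ref{lemtf}, the others being analogous) $a\to\infty$ of order $\sqrt{|\omega|}$ while $\abs b\to0$ of order $1/\sqrt{|\omega|}$, so $\abs{\cos(\alpha(\omega)\xi)}$ and $\abs{\sin(\alpha(\omega)\xi)}$ stay bounded on $[0,1]$ whereas $\cosh,\sinh$ grow like $e^{a\xi}$; a termwise estimate is thus useless. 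The key is the cancellation in $f_r=\tfrac12(A+B)e^{\alpha(\omega)\xi}+\tfrac12(A-B)e^{-\alpha(\omega)\xi}+C\cos(\alpha(\omega)\xi)+D\sin(\alpha(\omega)\xi)$ forced by the free-end conditions $f_r''(1)=f_r'''(1)=0$: adding these two conditions gives $(A+B)e^{\alpha(\omega)}=C(\cos\alpha(\omega)-\sin\alpha(\omega))+D(\cos\alpha(\omega)+\sin\alpha(\omega))$, hence $\abs{A+B}\lesssim e^{-a}(\abs C+\abs D)$ and so $\abs{\tfrac12(A+B)e^{\alpha(\omega)\xi}}\lesssim e^{-a(1-\xi)}(\abs C+\abs D)\le\abs C+\abs D$ uniformly for $\xi\in[0,1]$. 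The remaining coefficients are controlled exactly like the constants $C_{j,\omega}$ in Lemma~\ref{lemtf}: the $2\times2$ system for $(C,D)$ obtained from the boundary conditions has determinant $C_2(\omega)=2+2\cosh\alpha(\omega)\cos\alpha(\omega)$, which by \eqref{eq.coscosh} together with $\abs{\cosh\alpha(\omega)}\gtrsim e^{a}$ and $\abs{\cos\alpha(\omega)}\ge\abs{\sinh b}\gtrsim1/\sqrt{|\omega|}$ (using $\abs{\cos(a+ib)}^2=\cos^2a+\sinh^2b$ and $\abs b\gtrsim1/\sqrt{|\omega|}$ from Lemma~\ref{lemtf}) is $\gtrsim e^{a}/\sqrt{|\omega|}$ in modulus, while the numerators are $O(e^{a})\,\abs{u_b}$; since $\abs{\alpha(\omega)}$ is comparable to $\sqrt{|\omega|}$ this yields $\abs C,\abs D\lesssim\sqrt{|\omega|}\,\abs{u_b}$, and then $\abs A=\abs{\rho a\,u_{b1}-C}$ and $\abs B=\abs{(\rho a/\alpha(\omega))u_{b2}-D}$ are also $\lesssim\sqrt{|\omega|}\,\abs{u_b}$. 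Combining, $\abs{f_r(\xi)}\lesssim\sqrt{|\omega|}\,\abs{u_b}$ uniformly in $\xi$; since $f_r''=\alpha(\omega)^2[\,\cdot\,]$ has the same structure and $\abs{\alpha(\omega)}^2\lesssim|\omega|$, also $\abs{f_r''(\xi)}\lesssim|\omega|^{3/2}\abs{u_b}$, whence $\norm{f_r}_{L^2}^2\lesssim|\omega|\abs{u_b}^2$ and $\norm{g_r}_{L^2}^2=(\rho a)^{-2}\abs\omega^{-2}\norm{f_r''}_{L^2}^2\lesssim|\omega|\abs{u_b}^2$ for $\abs\omega\ge\omega_0$. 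Hence $\norm{x_r}_{X_r}^2\lesssim(|\omega|+1)\abs{u_b}^2$ for $\abs\omega\ge\omega_0$, and, since $\omega\mapsto R(i\omega,A_{b_{-1}})B_b$ is continuous on the compact interval $[-\omega_0,\omega_0]$ (by $i\mathbb{R}\subset\rho(A_b)$ from Lemma~\ref{lem.observability}), we conclude $\norm{R(i\omega,A_{b_{-1}})B_b}\le C'\sqrt{|\omega|+1}$ for all $\omega\in\mathbb{R}$.

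\emph{Nonsingularity of $I+P_b(i\omega)P_c(i\omega)$ for $\omega\neq0$.} Since $A_c=0$, $C_c=I$ and $B_c=\diag(1/m,1/I_m)$, we have $P_c(i\omega)=(i\omega)^{-1}\diag(1/m,1/I_m)$, which is skew-Hermitian, so $\re\langle P_c(i\omega)v,v\rangle=0$ for all $v$. Next, $x_r=(f_r,g_r)^T=R(i\omega,A_{r_{-1}})B_ru_b$ satisfies $\mathcal{A}_rx_r=i\omega x_r$, hence $\re\langle\mathcal{A}_rx_r,x_r\rangle_{X_r}=\re(i\omega\norm{x_r}_{X_r}^2)=0$; the energy balance of the viscously damped right beam — $\re\langle\mathcal{A}_rx,x\rangle_{X_r}=\re\langle\mathcal{B}_rx,\mathcal{C}_rx\rangle-\gamma(\rho a)^{-2}\norm{x_1}_{L^2}^2$ for $x=(x_1,x_2)^T\in D(\mathcal{A}_r)$, which refines the impedance passivity of $(\mathcal{A}_r,\mathcal{B}_r,\mathcal{C}_r)$ by making the dissipation explicit via the conservativity of the undamped port-Hamiltonian beam and the computation of $\re\langle B_0x,x\rangle$ in the proof of Lemma~\ref{lem.observability} — then gives $\re\langle u_b,P_r(i\omega)u_b\rangle=\gamma(\rho a)^{-2}\norm{f_r}_{L^2}^2$; adding the analogous identity for the left beam and using $P_b=P_l+P_r$, $\re\langle u_b,P_b(i\omega)u_b\rangle=\gamma(\rho a)^{-2}(\norm{f_l}_{L^2}^2+\norm{f_r}_{L^2}^2)\ge0$. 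If this vanishes then $f_l\equiv f_r\equiv0$, hence $g_l=(\rho a)^{-1}(i\omega)^{-1}f_l''=0$ and $g_r=0$ (here $\omega\neq0$ is used), so $x_l=x_r=0$ and $u_b=\mathcal{B}_lx_l=0$; thus $\re\langle u_b,P_b(i\omega)u_b\rangle>0$ for every $u_b\neq0$. Finally, the standard argument for a feedback interconnection of passive systems applies: if $(I+P_b(i\omega)P_c(i\omega))v=0$, set $w=P_c(i\omega)v$, so $v=-P_b(i\omega)w$ and $\re\langle P_b(i\omega)w,w\rangle=-\re\langle v,w\rangle=-\re\langle v,P_c(i\omega)v\rangle=0$, which forces $w=0$ and hence $v=0$. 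Since $U_b=\mathbb{R}^2$ is finite-dimensional, $I+P_b(i\omega)P_c(i\omega)$ is injective, hence nonsingular, for every $\omega\in\mathbb{R}\setminus\{0\}$.
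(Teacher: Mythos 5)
Your proposal is correct, and the nonsingularity part coincides with the paper's: both extract the flux identity $\re\langle P_r(i\omega)u_b,u_b\rangle=\gamma(\rho a)^{-2}\norm{f_r}_{L^2}^2$ from equation \eqref{rePb}, conclude $\re P_b(i\omega)>0$ and $\re P_c(i\omega)=0$, and you merely spell out the feedback-interconnection step that the paper compresses into a single ``consequently''. The resolvent bound, however, is proved by a genuinely different route. The paper never returns to the explicit solution formulas: it takes $L^2$ inner products of \eqref{pde1} with $(\rho a)^{-1}f_r$ and of \eqref{pde2} with $EIg_r$, integrates by parts, and adds the conjugate of the second identity to the first to obtain $(\rho a)^{-1}(i\omega+\gamma(\rho a)^{-1})\norm{f_r}^2_{L^2}-iEI\omega\norm{g_r}^2_{L^2}=\langle y_r,u_b\rangle$; the real and imaginary parts then give $\norm{x_r}_{X_r}^2\leq(\tfrac{2\rho a}{\gamma}+1)\norm{P_r(i\omega)u_b}\norm{u_b}$, and the $\sqrt{|\omega|+1}$ bound drops out of the linear growth of $P_r$ established in Lemma \ref{lemtf}. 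You instead estimate the explicit solution pointwise, which forces you to confront the exponentially growing mode directly; your observation that the free-end conditions give $\abs{A+B}\lesssim e^{-\re\alpha(\omega)}(\abs C+\abs D)$, together with the lower bound $\abs{C_2(\omega)}\gtrsim e^{\re\alpha(\omega)}/\sqrt{|\omega|}$, is exactly the cancellation needed and is correct. Two remarks on the trade-off: the paper's energy-identity argument is shorter and quarantines all hyperbolic/trigonometric bookkeeping inside Lemma \ref{lemtf}, whereas your computation yields the slightly stronger pointwise bound $\abs{f_r(\xi)}\lesssim\sqrt{|\omega|}\abs{u_b}$; on the other hand your assertion that the Cramer numerators are $O(e^{\re\alpha(\omega)})\abs{u_b}$ rather than the naive $O(e^{2\re\alpha(\omega)})\abs{u_b}$ relies on the $\cosh^2\alpha-\sinh^2\alpha=1$ cancellation that is built into the paper's constants $C_1(\omega),\dots,C_5(\omega)$, so the appeal to ``controlled exactly like the $C_{j,\omega}$ in Lemma \ref{lemtf}'' is carrying real weight there and would deserve one explicit line in a final write-up.
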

\begin{proof}
By using \cite[Rem. 10.1.5]{TucWei2009}, we have that for every $u_b \in U_b$, $i\omega \in \rho(A_{b_{-1}})$,
\begin{align*}
x_b = R(i\omega,A_{b_{-1}})B_bu_b = \begin{bmatrix}R(i\omega,A_{l_{-1}})B_l \\ R(i\omega,A_{r_{-1}})B_r\end{bmatrix}u_b \in D(\mathcal{A}_b)
\end{align*}
where $B_l$, $B_r$ and $B_b$ are defined in Remark \ref{rem.Bb}, is the unique solution of the abstract elliptic problem 
\begin{align*}(i\omega-\mathcal{A}_b)x_b=0,\\
\mathcal{B}_bx_b=u_b.\end{align*} 
Assume that $|\omega|\geq 1$. Let us start by estimating the norm of $x_r=R(i\omega,A_{r_{-1}})B_ru_b$ which is the unique solution of $(i\omega-\mathcal{A}_r)x_r=0,\ 
\mathcal{B}_rx_r=u_b.$    
If $x_r=(f_r,g_r)^T$, then using the expression for $\mathcal{A}_r$, we have 
\begin{align}
(i\omega+\gamma (\rho a)^{-1})f_r+EI g_r^{\prime\prime}=0, \label{pde1}\\
-(\rho a)^{-1}f_r^{\prime\prime}+i\omega g_r =0 \label{pde2},\\
f_r(0)= \rho a\  u_{b1}, \ f_r'(0)=\rho a\ u_{b2},\\
EI g_r(1) = 0,\ EI g_r'(1)=0.
\end{align}
 Taking $L^2$ inner product of (\ref{pde1}) with $(\rho a)^{-1} f_r $ and $L^2$ inner product of (\ref{pde2}) with $ EI g_r $, respectively, we obtain
\begin{equation}
\begin{aligned}\label{pde1sol}
(\rho a)^{-1}(i\omega+\gamma (\rho a)^{-1}) \norm{f_r}^2_{L^2}-EI(\rho a)^{-1}(\rho a) g_r'(0) u_{b1}\\ -EI(\rho a)^{-1}\int_0^1{g_r'\bar{f_r}'} d\xi =0 .
\end{aligned}
\end{equation}
\begin{align}\label{pde2sol}
EI\bar{g_r}(0)u_{b2}+EI (\rho a)^{-1} \int_0^1{f_r'\bar{g_r}'} d\xi+i EI \omega \norm{g_r}^2_{L^2}=0.
\end{align}
Adding complex conjugate of (\ref{pde2sol}) to (\ref{pde1sol}), we obtain
\begin{equation}\label{rePb}
(\rho a)^{-1}(i\omega+\gamma (\rho a)^{-1}) \norm{f_r}^2_{L^2}-i EI\omega \norm{g_r}^2_{L^2}=\left\langle y_r,u_b\right\rangle.
\end{equation}
Equating real and imaginary parts and using the Cauchy-Schwartz inequality, we obtain
\begin{align*}
\gamma (\rho a)^{-2} \norm{f_r}^2_{L^2} &\leq \norm{P_r(i\omega)u_b}\norm{u_b}\\
EI \norm{g_r}^2_{L^2} &\leq \bigg(\frac{\rho a}{\gamma}+\frac{1}{|\omega|}\bigg)\norm{P_r(i\omega)u_b}\norm{u_b},
\end{align*}
where $P_r(\cdot)$ is the transfer function of the right beam system. Therefore,
\begin{align*}
\norm{x_r}_{X_r}^2 &=(\rho a)^{-1}\norm{f_r}_{L^2}^2+EI \norm{g_r}_{L^2}^2,\\
             &\leq \frac{\rho a}{\gamma} \norm{P_r(i\omega)u_b}\norm{u_b}+\bigg(\frac{\rho a}{\gamma}+\frac{1}{|\omega|}\bigg)\norm{P_r(i\omega)u_b}\norm{u_b},\\
						 &\leq \bigg(\frac{2 \rho a}{\gamma}+1\bigg)\norm{P_r(i\omega)u_b}\norm{u_b}.
\end{align*}
Since we have from Lemma \ref{lemtf} that $\norm{P_r(i\omega)}$ can grow at most linearly, the above estimate implies that there exists $C_1>0$ such that $\norm{x_r}=\norm{R(i\omega,A_{r_{-1}})B_ru_b}\leq C_1 \sqrt{|\omega|+1}\norm{u_b},\ |\omega|\geq 1$. We can analogously show that there exists $C_2>0$ such that $\norm{R(i\omega,A_{l_{-1}})B_lu_b}\leq C_2 \sqrt{|\omega|+1}\norm{u_b},\ |\omega|\geq 1$. Combining these estimates, we can see that $\norm{R(i\omega,A_{b_{-1}})B_b}\lesssim \sqrt{|\omega|+1}$ for all $|\omega|\geq 1$. Finally, by continuity of $R(i\omega,A_{b_{-1}})B_b$ with respect to $i\omega$ on $i\mathbb{R}$, we have that 
$\norm{R(i\omega,A_{b_{-1}})B_b}\lesssim \sqrt{|\omega|+1}$ for all $\omega\in\mathbb{R}$.

From equation (\ref{rePb}), we observe that $\re{P_r(i\omega)}>0,\ \omega\in\mathbb{R}$. Indeed, from (\ref{rePb}) we have 
\begin{align*}
\re \left\langle y_r,u_b\right\rangle = \re\left\langle P_r(i\omega)u_b,u_b\right\rangle=\gamma (\rho a)^{-2} \norm{f_r}^2_{L^2}.
\end{align*}
Analogously, we have that $\re{P_l(i\omega)}>0,\ \omega\in\mathbb{R}.$ This implies that $\re{P_b(i\omega)}>0,\ \omega\in\mathbb{R}$. In addition, from the transfer function
\begin{equation}\label{tfrigidbody}P_c(i\omega)=\frac{1}{i\omega}\begin{bmatrix} \frac{1}{m} & 0 \\ 0 & \frac{1}{I_m}\end{bmatrix},\ \omega \in \mathbb{R}\backslash \{0\}\end{equation} of the rigid body, we see that $\re{P_c(i\omega)}=0,\ \omega\in\mathbb{R}\backslash\{0\}$. 
Consequently, we have that $I+P_b(i\omega)P_c(i\omega)$ is nonsingular for all $\omega \in \mathbb{R}\backslash \{0\}$.
\end{proof}

\begin{lemma}\label{corresolbd2}
There exists $C^{\prime\prime}>0$ such that $\norm{C_b R(i\omega,A_b)} \leq C^{\prime\prime}\sqrt{|\omega|+1}$ for all $\omega \in \mathbb{R}$.
\end{lemma}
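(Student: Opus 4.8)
This is the observation-side companion of Lemma~\ref{corresolbd1}, and the plan is to prove it by a duality (pairing) argument rather than by estimating the boundary traces occurring in $\mathcal{C}_b$ directly. First I would reduce the claim to the two beams separately: if $f_b=(f_l,f_r)^T\in X_b$ and $x_b=R(i\omega,A_b)f_b$, then $x_b\in D(A_b)$ forces $\mathcal{B}_lx_l=\mathcal{B}_rx_r=0$, so that $x_l=R(i\omega,A_l)f_l$, $x_r=R(i\omega,A_r)f_r$ and $C_bx_b=\mathcal{C}_lx_l+\mathcal{C}_rx_r$. Hence it suffices to prove $\norm{\mathcal{C}_rR(i\omega,A_r)}\lesssim\sqrt{|\omega|+1}$ and the analogous bound for the left beam, and add the two contributions.

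Fix the right beam, $\omega\in\mathbb{R}$, $h\in X_r$ and $v\in Y_r$. Put $x=R(i\omega,A_r)h$, which lies in $D(\mathcal{A}_r)$ and solves $(i\omega-\mathcal{A}_r)x=h$ with $\mathcal{B}_rx=0$, and $z=R(i\omega,A_{r_{-1}})B_rv$, which by \cite[Rem.~10.1.5]{TucWei2009} lies in $D(\mathcal{A}_r)$ and solves $(i\omega-\mathcal{A}_r)z=0$ with $\mathcal{B}_rz=v$; write $x=(x_1,x_2)^T$ and $z=(z_1,z_2)^T$. The key step is a Green-type identity for the port-Hamiltonian beam operator: computing $\iprod{(i\omega-\mathcal{A}_r)x}{z}_{X_r}$ by integrating by parts the second-order part of $\mathcal{A}_r$ (so that boundary contributions appear involving the traces at $\xi=0$ and $\xi=1$ of $x$ and $z$), discarding the terms that vanish by the homogeneous conditions $x_1(0)=x_1'(0)=0$, $x_2(1)=x_2'(1)=0$ and $z_2(1)=z_2'(1)=0$, inserting $\mathcal{B}_rz=v$, and finally using $(i\omega-\mathcal{A}_r)z=0$ to eliminate $z_1''$ and $z_2''$. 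I expect this to produce an identity of the form
\[
\iprod{\mathcal{C}_rx}{v}_{Y_r}=2\gamma(\rho a)^{-2}\iprod{x_1}{z_1}_{L^2}-\iprod{h}{z}_{X_r},
\]
in which the troublesome trace $EI\,w_r'''(0)$ no longer appears on the right-hand side.

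Granting this identity, the estimate is immediate: the first term is bounded by $2\gamma(\rho a)^{-2}\norm{x_1}_{L^2}\norm{z_1}_{L^2}\lesssim\norm{x}_{X_r}\norm{z}_{X_r}$ and the second by $\norm{h}_{X_r}\norm{z}_{X_r}$. By Lemma~\ref{corresolbd1} (applied to the right-beam block of $R(i\omega,A_{b_{-1}})B_b$) one has $\norm{z}_{X_r}=\norm{R(i\omega,A_{r_{-1}})B_rv}\lesssim\sqrt{|\omega|+1}\,\norm{v}$, and by the exponential stability of $T_r(t)$ from Lemma~\ref{lem.observability} the resolvent $R(i\omega,A_r)$ is uniformly bounded on $i\mathbb{R}$, so $\norm{x}_{X_r}\lesssim\norm{h}$. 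Combining, $\abs{\iprod{\mathcal{C}_rx}{v}_{Y_r}}\lesssim\sqrt{|\omega|+1}\,\norm{h}\,\norm{v}$; taking the supremum over $\norm{v}\le1$ gives $\norm{\mathcal{C}_rR(i\omega,A_r)}\lesssim\sqrt{|\omega|+1}$ for all $\omega\in\mathbb{R}$. The left beam is identical, and the reduction of the first paragraph then yields the lemma.

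I expect the only real work to be the derivation of the Green identity and the bookkeeping showing that the surviving boundary terms collapse exactly to $\iprod{\mathcal{C}_rx}{v}_{Y_r}$ after the substitutions coming from $(i\omega-\mathcal{A}_r)z=0$; this is precisely where the impedance-passive (port-Hamiltonian) structure of the beam is used. The reason for routing through the auxiliary function $z$ is that a direct estimate of $\mathcal{C}_rR(i\omega,A_r)$ meets the boundary trace $w_r'''(0)$ of an $H^2$-function, for which crude trace inequalities give only the weaker growth $(|\omega|+1)^{3/4}$; pairing against $z$ transfers that trace onto the well-behaved test function and absorbs it into $\norm{R(i\omega,A_{r_{-1}})B_r}$, which was already controlled in Lemma~\ref{corresolbd1} (via the transfer-function bound of Lemma~\ref{lemtf}).
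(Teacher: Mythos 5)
Your proposal is correct and follows essentially the same route as the paper: the paper's key identity $C_rR(i\omega,A_r)=-(R(i\omega,A_{r_{-1}})B_r)^*(I+2B_0R(i\omega,A_r))$, obtained there via the abstract adjoint/colocation relation $B_r^*=C_r$ from \cite[Rem.~10.1.6]{TucWei2009}, is exactly your Green-type identity $\iprod{\mathcal{C}_rx}{v}=2\gamma(\rho a)^{-2}\iprod{x_1}{z_1}_{L^2}-\iprod{h}{z}_{X_r}$ written out in the pairing form. The only difference is that you derive it by explicit integration by parts rather than through the duality framework; the reduction to the two beams and the final estimates via Lemma~\ref{corresolbd1} and the uniform resolvent bound from Lemma~\ref{lem.observability} coincide with the paper's argument.
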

\begin{proof}
 First let us prove that $\norm{C_r R(i\omega,A_r)}$, where $A_r=\mathcal{A}_r|_{\mathcal{N}(\mathcal{B}_r)}$, $C_r=\mathcal{C}_r|_{\mathcal{N}(\mathcal{B}_r)}$, grows at most at a rate of $\sqrt{|\omega|},\ \omega\in\mathbb{R}$. Let us write $A_r$ as bounded perturbation of a skew-adjoint operator. i.e., $A_r=A_0+B_0$ where $A_0$ and $B_0$ are given as in Lemma \ref{lem.observability} and $A_0^*=-A_0,\ D(A_0^*)=D(A_0)$ and $B_0^*=B_0$. 
Now, for the system $(A_0,B_r,C_r)$, using duality between $D(A_0^*)$ and $X_{r_{-1}}$ (see \cite[Sec. 2.10]{TucWei2009}), we have $B_r^*\in \mathcal{L}(D(A_0^*),U_r)$ is the adjoint of $B_r \in \mathcal{L}(U_r,X_{r_{-1}})$ in the sense that 
\begin{align*}
\left\langle x_r,B_ru_r \right\rangle_{D(A_0^*),X_{r_{-1}}}= \left\langle B_r^*x_r,u_r \right\rangle_{U_r},\quad x_r\in D(A_0^*),\ u_r\in U_r
\end{align*} and $A_{0_{-1}}$ is the adjoint of $A_0^*$ in the sense that
\begin{align*}
\left\langle \psi_r,A_{0_{-1}}x_r \right\rangle_{D(A_0^*),X_{r_{-1}}}= \left\langle A_0^*\psi_r,x_r \right\rangle_{X_r},\quad \psi_r\in D(A_0^*),\ x_r\in X_r.
\end{align*}
 Moreover, using \cite[Rem.10.1.6]{TucWei2009}, we have
\begin{align*}
\left\langle \mathcal{B}_rx_r,B_r^*\psi_r \right\rangle_{U_r}=\left\langle \mathcal{A}_0x_r,\psi_r \right\rangle_{X_r}-\left\langle x_r,A_0^*\psi_r \right\rangle_{X_r},\quad \psi_r\in D(A_0^*),\ x_r\in D(\mathcal{A}_0)
\end{align*} and by direct computation using integration by parts we obtain $B_r^*x_r=C_rx_r$ for $x_r \in D(A_0)$. 
Therefore, for all $x_r \in D(A_0)$, $u_r \in U_r$ and $i\omega\in \rho(A_0)\cap i\mathbb{R}$, we have 
\begin{align*}
\left\langle x_r,R(i\omega,A_{0_{-1}})B_ru_r\right\rangle_{X_r}&= \left\langle R(\overline{i\omega},A_0^*)x_r,B_ru_r\right\rangle_{D(A_0^*),X_{r_{-1}}}\\
                                                               &=\left\langle B_r^*R(\overline{i\omega},A_0^*)x_r,u_r\right\rangle_{U_r},\\
																															&=-\left\langle C_rR(i\omega,A_0)x_r,u_r \right\rangle_{U_r}. 
\end{align*}
Since $A_r=A_0+B_0$ and $i\mathbb{R}\subset \rho(A_r)$, for $i\omega \in \rho(A_0)\cap i\mathbb{R}$, we obtain
\begin{align*}
&\left\langle x_r,R(i\omega,A_{r_{-1}})B_ru_r\right\rangle_{X_r}\\
&\qquad=\left\langle x_r,(I-R(i\omega,A_{0_{-1}})B_0)^{-1}R(i\omega,A_{0_{-1}})B_ru_r\right\rangle_{X_r},\\
																															 &\qquad=\left\langle (I+B_0R(i\omega,A_0))^{-1}x_r,R(i\omega,A_{0_{-1}})B_ru_r\right\rangle_{X_r},\\
																															 &\qquad=-\left\langle C_r R(i\omega,A_0)(I+B_0R(i\omega,A_0))^{-1} x_r,u_r\right\rangle_{U_r},\\
																 																 &\qquad = -\left\langle C_rR(i\omega,A_r)(I+2B_0R(i\omega,A_r))^{-1}x_r,u_r\right\rangle_{U_r}.
\end{align*}
Since $x_r \in X_r$ and $u_r \in U_r$ are arbitrary, we have \begin{equation}\label{resImax}C_rR(i\omega,A_r)=-(R(i\omega,A_{r_{-1}})B_r)^*(I+2B_0R(i\omega,A_r)),\ i\omega \in \rho(A_0)\cap i\mathbb{R}\end{equation}
where using Lemma \ref{lem.observability}, we have that  $\sup_{\omega \in \mathbb{R}}\norm{I+2B_0R(i\omega,A_r)}<\infty$. Since $A_0$ has discrete spectrum, the continuity of $R(i\omega,A_r)$, $C_rR(i\omega,A_r)$ and $R(i\omega,A_{r_{-1}})B_r$ with respect to $i\omega$ on $i\mathbb{R}$ imply that (\ref{resImax}) holds for all $i\omega \in i\mathbb{R}$. 
Now, using Lemma \ref{corresolbd1}, we have that there exists $C_0>0$ such that $\norm{C_rR(i\omega,A_r)}\leq C_0 \sqrt{|\omega|+1},\ \omega\in\mathbb{R}$. We can analogously show that there exists $C_0'>0$ such that $\norm{C_lR(i\omega,A_l)}\leq C_0' \sqrt{|\omega|+1},\ \omega\in\mathbb{R}$. Thus $\norm{C_bR(i\omega,A_b)}\lesssim \sqrt{|\omega|+1},\ \omega\in\mathbb{R}$.
\end{proof}
\begin{lemma}\label{ub_PbPc}
Let $P_b(\cdot)$ and $P_c(\cdot)$ be the transfer functions of the beam system $(\mathcal{A}_b,\mathcal{B}_b,\mathcal{C}_b)$ and the rigid body $(A_c,B_c,C_c)$, respectively. Then there exist $\omega_0, \tilde{M}>0$ such that $\norm{(I+P_b(i\omega)P_c(i\omega))^{-1}}\leq \tilde{M}$ for all $|\omega|\geq \omega_0$. 
\end{lemma}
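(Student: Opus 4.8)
The plan is to reduce the statement to a scalar estimate by observing that, according to the explicit formulas \eqref{beamtf1} and \eqref{tfrigidbody}, both $P_b(i\omega)$ and $P_c(i\omega)$ are diagonal $2\times2$ matrices for $\omega\in\mathbb{R}\backslash\{0\}$, hence so is $I+P_b(i\omega)P_c(i\omega)$, with diagonal entries
\begin{align*}
1-\frac{4EI\,\alpha(\omega)^{3}C_{2,\omega}}{m\omega^{2}}\qquad\text{and}\qquad 1-\frac{4EI\,\alpha(\omega)C_{3,\omega}}{I_m\omega^{2}}.
\end{align*}
Then $\norm{(I+P_b(i\omega)P_c(i\omega))^{-1}}$ is the larger of the reciprocals of the moduli of these two numbers, so it suffices to bound each of them away from $0$ for $\abs{\omega}$ large, and since $P_b$, $P_c$ are transfer functions of real systems we may take $\omega>0$. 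First I would dispose of the second entry: by $\abs{\alpha(\omega)}\sim\sqrt{\omega}$ and $\abs{C_{3,\omega}}\lesssim\sqrt{\omega}$ (from \eqref{eq.C3}) the term $4EI\alpha(\omega)C_{3,\omega}/(I_m\omega^{2})$ is $O(\omega^{-1})$, so the second entry tends to $1$ and is $\geq\tfrac12$ for $\omega\geq\omega_0$.

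The first entry is delicate: the factor $\alpha(\omega)^{3}/\omega^{2}\sim\omega^{-1/2}$ exactly compensates the possible $\sqrt{\omega}$--growth of $C_{2,\omega}$, so $4EI\alpha(\omega)^{3}C_{2,\omega}/(m\omega^{2})$ is only bounded and could a priori approach $1$. To handle it I would first simplify the prefactor: from $\alpha(\omega)^{4}=\tfrac{\rho a}{EI}(\omega^{2}-i\gamma(\rho a)^{-1}\omega)$ one gets $\tfrac{4EI\alpha(\omega)^{3}}{m\omega^{2}}=\tfrac{4}{m\alpha(\omega)}(\rho a-i\gamma\omega^{-1})$, which, writing $x=\re\alpha(\omega)$ and $y=\im\alpha(\omega)$ and using $y\to0$, equals $\tfrac{4\rho a}{mx}(1+o(1))$ as $\omega\to\infty$. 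Next, with $c=\cos\alpha(\omega)$, $C=\cosh\alpha(\omega)$, $s=\sin\alpha(\omega)$, $S=\sinh\alpha(\omega)$, a direct computation from \eqref{eq.C2C3} and the definitions of $C_1(\omega),\dots,C_5(\omega)$ gives the identity
\begin{align*}
C_{2,\omega}=\frac{c-C}{2(S+s)}-\frac{Ss(C+c)}{2(1+Cc)(S+s)},
\end{align*}
and since $x\to\infty$ forces $\coth\alpha(\omega)\to1$, $\tanh\alpha(\omega)\to1$, $\abs{c}$ bounded and $\abs{Cc}\to\infty$ (the exponential growth of $\abs{C}$ beating the at most $\omega^{-1/2}$ decay of $\abs{c}$ near the zeros of $\cos$), this collapses to $C_{2,\omega}=-\tfrac12(1+\tan\alpha(\omega))+o(1)$. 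Substituting, the first diagonal entry equals $1+\tfrac{2\rho a}{mx}\tan\alpha(\omega)+o(1)$, so everything reduces to proving that $\bigl|1+\tfrac{2\rho a}{mx}\tan\alpha(\omega)\bigr|$ is bounded below for $\omega\geq\omega_0$.

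The hard part is this last estimate, and it is where the viscous damping $\gamma>0$ is essential. Taking a square root of $\alpha(\omega)^{4}=\tfrac{\rho a}{EI}(\omega^{2}-i\gamma(\rho a)^{-1}\omega)$ shows $\im(\alpha(\omega)^{2})=2xy\to-\tfrac{\gamma}{2\sqrt{\rho a EI}}\neq0$, so $xy$ converges to a nonzero constant and hence $\abs{y}$ and $\abs{\sinh2y}$ are of order $1/x$. Suppose, for contradiction, that $1+\tfrac{2\rho a}{mx}\tan\alpha(\omega)\to0$ along a sequence $\omega\to\infty$. Using $\re\tan(x+iy)=\tfrac{\sin2x}{\cos2x+\cosh2y}$ and $\im\tan(x+iy)=\tfrac{\sinh2y}{\cos2x+\cosh2y}$, vanishing of the imaginary part forces $x^{2}(\cos2x+\cosh2y)\to\infty$, while $|1+\tfrac{2\rho a}{mx}\tan\alpha(\omega)|\to0$ forces $\abs{\tan\alpha(\omega)}\to\infty$, hence---the numerator of $\abs{\tan(x+iy)}^{2}=\tfrac{\sin^{2}2x+\sinh^{2}2y}{(\cos2x+\cosh2y)^{2}}$ being bounded---$\cos2x+\cosh2y\to0$, so that $x$ lies within $o(1)$ of a pole $\tfrac{\pi}{2}+k\pi$ of $\tan$, at distance $\delta$ say, with $\cos2x+\cosh2y\sim2(\delta^{2}+y^{2})$ and $\abs{\tan\alpha(\omega)}\sim(\delta^{2}+y^{2})^{-1/2}$; combined with $\abs{\tan\alpha(\omega)}\sim\tfrac{mx}{2\rho a}$ this gives $x^{2}(\delta^{2}+y^{2})\to\tfrac{4\rho^{2}a^{2}}{m^{2}}$, a finite limit, contradicting $x^{2}(\cos2x+\cosh2y)\to\infty$. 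Hence the first diagonal entry is bounded below by a positive constant for $\omega\geq\omega_0$, and together with the bound for the second entry this gives the uniform estimate $\tilde M$. (For $\gamma=0$ we would have $y\equiv0$, with the poles of $\tan$ lying on the path $\omega\mapsto\alpha(\omega)$; the estimate---and, consistently, exponential stability---then fails, so the damping is indispensable.)
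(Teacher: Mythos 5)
Your proposal is correct and follows essentially the same route as the paper's proof: diagonalize $I+P_b(i\omega)P_c(i\omega)$, dispose of the second entry via an $O(|\omega|^{-1})$ estimate, reduce the first entry to $1+f(\omega)\tan\alpha(\omega)$ with $f(\omega)\approx 2\rho a/(m\,\re\alpha(\omega))$, and rule out vanishing by contradiction using the real and imaginary parts of $\tan$ near its poles together with $\im\alpha(\omega)\sim 1/\re\alpha(\omega)$ (the role of the damping $\gamma>0$). Your closing contradiction --- pole proximity forces $x^{2}(\cos^{2}x+\sinh^{2}y)$ to a finite limit while the vanishing of the imaginary part forces it to infinity --- is a slightly more direct packaging of the paper's two-case analysis on the decay rate of $\abs{\cos x_k}$, but the underlying mechanism is identical.
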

\begin{proof}
From equation \eqref{beamtf1} in the proof of Lemma \ref{lemtf} and from equation \eqref{tfrigidbody} in the proof of Lemma \ref{corresolbd1}, we have
\begin{align*}
I+P_b(i\omega)P_c(i\omega) = \begin{bmatrix}Q_1(\omega) & 0 \\ 0 & Q_2(\omega)\end{bmatrix},\quad \omega \in \mathbb{R}\backslash \{0\},               
\end{align*}
where
\begin{align*}
Q_{1}(\omega)&=1-\frac{4EI}{m} \frac{{\alpha(\omega)}^3}{\omega^2} C_{2,\omega},\
Q_{2}(\omega)=1-\frac{4EI}{I_m} \frac{{\alpha(\omega)}}{\omega^2} C_{3,\omega},\\
\alpha(\omega)  &= |\alpha(\omega)|\bigg(\cos{\bigg(\frac{\theta(\omega)+2\pi k}{4}\bigg)}+i \sin{\bigg(\frac{\theta(\omega)+2\pi k}{4}\bigg)}\bigg),\ k=0,1,2,3,\\
|\alpha(\omega)|&=\bigg(\frac{\rho a}{EI}|\omega|\sqrt{\omega^2+\gamma^2(\rho a)^{-2}}\bigg)^{\frac{1}{4}},\
\theta(\omega) = \tan^{-1}(\frac{-\gamma (\rho a)^{-1}}{\omega}),
\end{align*}
and $C_{2,\omega}$ and $C_{3,\omega}$ are defined in \eqref{eq.C2C3}. 
We will show that there exist $\omega_0>0$ and $c_1, c_2>0$ such that $|Q_1(\omega)|>c_2$ and $|Q_2(\omega)|>c_1$ for all $|\omega|\geq \omega_0$.
Since $\abs{C_{2,\omega}}$ and $\abs{C_{3,\omega}}$ have similar terms for all the four roots of $\alpha(\omega)$, we restrict our analysis to the principal branch of the fourth root of $\alpha(\omega)$ and analogous arguments can be used to show that the statement is also valid for the other roots of $\alpha(\omega)$.
 
We have from equation \eqref{eq.C3} that there exists $M_1,\omega_0>0$ such that $|C_{3,\omega}|\leq M_1 \sqrt{|\omega|}$ for all $|\omega|\geq \omega_0$. Therefore, for $|\omega|\geq \omega_0$,  we have that
\begin{align*}
\abs{Q_2(\omega)-1}=\bigg|\frac{4EI}{I_m} \frac{{\alpha(\omega)}}{\omega^2} C_{3,\omega}\bigg| \lesssim 4 \frac{EI}{I_m}(\frac{\rho a}{EI})^{\frac{1}{4}}\frac{1}{|\omega|} \to 0 
\end{align*}
as $|\omega|\to \infty$.This implies that there exists $c_1>0$ such that $|Q_{2}(\omega)|>c_1$ for all $|\omega|\geq \omega_0$. 

Now it remains to show that there exists $c_2>0$ such that $\abs{Q_{1}(\omega)}\geq c_2$ for all $|\omega|\geq \omega_0$. We begin by showing that 
 if we define $f(\omega)=\frac{2EI\ga(\gw)^3}{m\gw^2}$ and $\tilde{Q}_1(\gw)=1+f(\gw)\tan(\ga(\gw))$, then
\eqn{
\label{Q1Q1tildeconvergence}
\lim_{\abs{\gw}\to\infty} \abs{Q_1(\gw)- \tilde{Q}_1(\gw)}=0.
}
This will imply that $\abs{Q_1(\gw)}$ is uniformly bounded from below for $|\omega|\geq \omega_0$ if and only if the same is true for  $\abs{\tilde{Q}_1(\gw)}$.
We have from equation \eqref{eq.C2C3} in Lemma \ref{lemtf} that
\begin{align*}
C_{2,\omega}&=\frac{\cos{(\alpha(\omega))}}{\sinh{(\alpha(\omega))}+\sin{(\alpha(\omega))}}- \frac{1}{2} (1+C_{5,\omega}) C_{6,\gw}\\
C_{5,\omega}&:=
\frac{\sinh{(\alpha(\omega))}\sin{(\alpha(\omega))}}{1+\cosh{(\alpha(\omega))}\cos{(\alpha(\omega))}},\ 
C_{6,\omega}:=
\frac{\cosh{(\alpha(\omega))}+\cos{(\alpha(\omega))}}{\sinh{(\alpha(\omega))}+\sin{(\alpha(\omega))}}.
\end{align*} 
We have from equations \eqref{ub2} and \eqref{ub3} that $|\cos{(\alpha(\omega))}/(\sinh{(\alpha(\omega))}+\sin{(\alpha(\omega))})|$ and $\abs{C_{6,\gw}}$ 
are uniformly bounded for $|\omega|\geq \omega_0$.
Thus for all $\abs{\gw}\geq \omega_0$, we have
\eq{
 \abs{Q_1(\gw)- \tilde{Q}_1(\gw)}
 &=\abs{2f(\gw)C_{2,\gw} + f(\gw)\tan(\ga(\gw))}\\
&\lesssim \abs{f(\gw)}
+\abs{f(\gw)(C_{5,\gw}C_{6,\gw}-\tan(\ga(\gw)))}\\
&\lesssim \abs{f(\gw)}
 + \abs{f(\gw)C_{5,\gw}(C_{6,\gw}-1)}\\
&\quad +\abs{f(\gw)\tan(\ga(\gw))(\tanh(\ga(\gw))-1)}\\
&\quad
+\abs{f(\gw)(C_{5,\gw}-\tanh(\ga(\gw))\tan(\ga(\gw)))}.
}
Using the definition of $\alpha(\omega)$, it is straightforward to show that $C_{6,\gw}\to 1$ and $\tanh(\ga(\gw))\to 1$ as $\abs{\gw}\to\infty$.
Moreover, as shown in \eqref{eq.bd} and \eqref{ub1}, we have $\abs{C_{5,\gw}}\lesssim \sqrt{|\gw|}$ and $\abs{\tan(\ga(\gw))}\leq \abs{\coth{(y_{\omega})}}+\abs{\tanh{(y_{\omega})}}\lesssim \sqrt{|\gw|}$ for $\abs{\gw}\geq \omega_0$.
Because of this, $\abs{f(\gw)C_{5,\gw}}$ and $\abs{f(\gw)\tan(\ga(\gw))}$ are uniformly bounded for $\abs{\gw} \geq \omega_0$, and therefore $\abs{f(\gw)C_{5,\gw}(C_{6,\gw}-1)}\to 0$ and $\abs{f(\gw)\tan(\ga(\gw))(\tanh(\ga(\gw))-1)}\to 0$ as $\abs{\gw}\to\infty$.
Finally, the last term in the estimate for 
 $\abs{Q_1(\gw)- \tilde{Q}_1(\gw)}$ satisfies
\eq{
&\abs{f(\gw)(C_{5,\gw}-\tanh(\ga(\gw))\tan(\ga(\gw)))}\\
&\quad =\abs{f(\gw)}
\bigg|
\frac{\sinh{(\alpha(\omega))}\sin{(\alpha(\omega))}}{1+\cosh{(\alpha(\omega))}\cos{(\alpha(\omega))}}
-\frac{\sinh{(\alpha(\omega))}\sin{(\alpha(\omega))}}{\cosh{(\alpha(\omega))}\cos{(\alpha(\omega))}}
\bigg|\\
&\quad =\abs{f(\gw)\tanh(\ga(\gw))\tan(\ga(\gw))}
\bigg|
\frac{1}{1+\cosh{(\alpha(\omega))}\cos{(\alpha(\omega))}}
\bigg|\to 0 
}
as $\abs{\gw}\to \infty$, since $\abs{f(\gw)\tanh(\ga(\gw))\tan(\ga(\gw))}$ is uniformly bounded for $\abs{\gw}\geq \omega_0$, and 
$\abs{\cosh(\alpha(\omega))\cos(\alpha(\omega))}\to \infty$ as $\abs{\gw}\to \infty$.
This finally shows that~\eqref{Q1Q1tildeconvergence} holds.

We claim that there exists $c'>0$  such that $\abs{\tilde{Q}_{1}(\omega)}\geq c'$ for all $\gw \geq \gw_0$. 
The case where $\omega$ is negative can be proved analogously. We will use proof by contradiction. To this end we assume that no such $c'>0$ exists. This implies that there exists a sequence $(\gw_k)_{k}\subset \R_+$ such that $\gw_k\to\infty$ as $k\to\infty$ and $\abs{\tilde{Q}_{1}(\omega_k)}\to 0$ as $k\to\infty$. 
 Separating real and imaginary parts of $\tilde{Q}_{1}(\omega_k)$ and denoting
$x_k = \re \alpha (\gw_k)$, $y_k = \im \alpha (\gw_k)$, $R_{1,k}=\re(f(\gw_k))\sin x_k$, $R_{2,k}=\re(f(\gw_k))\cosh y_k$, $I_{1,k}=\im(f(\gw_k))\cosh y_k$, $I_{2,k}=\im(f(\gw_k))\sin x_k$, we obtain
 
\eq{
\tilde{Q}_{1}(\omega_k) = 1+ \frac{R_{1,k}\cos x_k- I_{1,k}\sinh y_k}{\cos^2 x_k+ \sinh^2 y_k}
+ i \frac{R_{2,k}\sinh y_k + I_{2,k}\cosh x_k}{\cos^2 x_k+\sinh^2 y_k}.
}
Since we consider the principal branch of the fourth root of $\alpha(\omega_k)$, we have that there exist $m_1, m_2, m_3, m_4>0$ and $N_1\in \mathbb{N}$ such that 
\begin{align*}
m_1\sqrt{\omega_k}&\leq |x_k| \leq m_2 \sqrt{\omega_k}\\
\frac{m_3}{\sqrt{\omega_k}} &\leq |y_k| \leq \frac{m_4}{\sqrt{\omega_k}}
\end{align*}
for all $k\geq N_1$. This implies that there exist $m_5,m_6>0$ and $N_2\geq N_1$ such that $m_5\omega_k^{-1/2} \leq |\sinh y_k| \leq m_6\omega_k^{-1/2}$ for all $k\geq N_2$. Since $ y_k\to 0$, we have $\cosh y_k\to 1$ as $k\to \infty$, and thus there exist $m_7, m_8, m_9, m_{10}>0$ and $N_3\geq N_2$ such that $m_7\omega_k^{-1/2}\leq \abs{R_{2,k}}\leq m_8\omega_k^{-1/2}$ and $m_9\omega_k^{-3/2}\leq \abs{I_{1,k}}\leq m_{10}\omega_k^{-3/2}$ for all $k\geq N_3$. 

We will first show that $\abs{\cos x_k} \to 0$ as $k\to \infty$. Indeed, we have 
\eq{
\abs{R_{1,k}\cos x_k - I_{1,k}\sinh y_k }
&\leq 
\abs{\re (f(\gw_k))} + \abs{\im(f(\gw_k))}\abs{\sinh y_k} \abs{\cosh y_k}\\ 
&\lesssim \frac{1}{\sqrt{\gw_k}} 
}
for all $k\geq N_3$ 
and since the assumption $\abs{\tilde{Q}_{1}(\omega_k)}\to 0$ implies $\re \tilde{Q}_{1}(\omega_k)\to 0$, we must have $\cos^2 x_k+ \sinh^2 y_k\to 0$ as $k\to \infty$.
Thus $\abs{\cos x_k}\to 0$ as $k\to\infty$, and consequently also $\abs{\sin x_k}\to 1$ as $k\to\infty$. This further implies that there exist $m_{11}, m_{12}, m_{13}, m_{14}>0$ and $N_4\geq N_3$ such that $m_{11}\omega_k^{-1/2}\leq\abs{R_{1,k}}\leq m_{12}\omega_k^{-1/2}$ and $m_{13}\omega_k^{-3/2}\leq\abs{I_{2,k}}\leq m_{14}\omega_k^{-3/2}$ for all $k \geq N_4$. 
We consider the following cases.

 \textbf{Case 1 (fast decay of $\abs{\cos x_k}$):}
Consider the subsequence of  $(\gw_k)$ consisting of those elements $\gw_k$ which satisfy $\abs{\cos x_k}\leq 1/\gw_k$.
Then we have 
\eq{
\bigg|\frac{R_{1,k}\cos x_k- I_{1,k}\sinh y_k}{\cos^2 x_k+ \sinh^2 y_k}\bigg|
&\leq 
\frac{\abs{R_{1,k}\cos x_k- I_{1,k}\sinh y_k}}{\sinh^2 y_k}\\
&\lesssim \frac{|\cos x_k|/\sqrt{\gw_k}+ 1/\gw_k^2}{1/\gw_k}
\lesssim \frac{1}{\sqrt{\gw_k} } + \frac{1}{\gw_k} 
}
for all $k\geq N_4$. However, this implies $\tilde{Q}_{1}(\omega_k)\not\to 0$ as $k\to\infty$, since $\re \tilde{Q}_{1}(\omega_k)\to 1$. This implies that the subsequence of $(\gw_k)_k$ consisting of elements such that $\abs{\cos x_k}\leq 1/\gw_k$ must have at most finite number of elements.

\textbf{Case 2 (slow decay of $\abs{\cos x_k}$):}
As shown above, we necessarily have there exist $N_5\geq N_4$ such that $\abs{\cos x_k}>1/\gw_k$ for all $k\geq N_5$, and we will now restrict our attention to this range of the indices $k$. 
Then 
\eq{
\bigg|\frac{R_{1,k}\cos x_k - I_{1,k}\sinh y_k}{R_{1,k}\cos x_k}\bigg| \to 1
\ \mbox{and}\ 
\bigg|\frac{R_{2,k}\sinh y_k + I_{2,k}\cos x_k}{R_{2,k}\sinh y_k}\bigg|\to 1 
} as $k \to \infty$.
In addition, for $k\geq N_5$, we have
\eq{
\abs{\im (\tilde{Q}_{1}(w_k))} 
&= \frac{\abs{R_{2,k} \sinh y_k + I_{2,k} \cos x_k}}{\cos^2x_k + \sinh^2 y_k}\\
&= \bigg|\frac{R_{2,k} \sinh y_k + I_{2,k} \cos x_k}{R_{2,k}\sinh y_k} \bigg|\cdot \frac{\abs{R_{2,k} \sinh y_k }}{\cos^2x_k + \sinh^2 y_k}\\
&\gtrsim 
  \frac{1/\gw_k }{\cos^2x_k + \sinh^2 y_k}
}
and
\eq{
\abs{\re(\tilde{Q}_{1}(w_k))-1}
&=\frac{\abs{R_{1,k} \cos x_k - I_{1,k} \sinh y_k}}{\cos^2x_k + \sinh^2 y_k}\\
&=
\bigg|\frac{R_{1,k} \cos x_k - I_{1,k} \sinh y_k}{R_{1,k}\cos x_k}\bigg|
\cdot\frac{\abs{R_{1,k} \cos x_k} }{\cos^2x_k + \sinh^2 y_k}\\
& \gtrsim \frac{ \abs{\cos x_k}/\sqrt{\gw_k} }{\cos^2x_k + \sinh^2 y_k} .
}
Using these estimates, we have that
\eq{
\frac{1}{\abs{\cos x_k}\sqrt{\gw_k}}
= \frac{1/\gw_k}{\cos^2 x_k+ \sinh^2 y_k}\cdot \frac{\cos^2 x_k+ \sinh^2 y_k}{\abs{\cos x_k}/\sqrt{\gw_k}} \to 0
}
as $k\to \infty$ since $\abs{\re\tilde{Q}_{1}(\omega_k)-1}\to 1$ and $\abs{\im \tilde{Q}_{1}(\omega_k)}\to 0$ as $k\to \infty$ .
Because of this we also have
\eq{
\bigg|\frac{\cos^2 x_k+\sinh^2 y_k}{\cos^2 x_k}-1\bigg|
= \frac{\sinh^2 y_k}{\cos^2 x_k}
\lesssim \frac{1}{(\sqrt{\gw_k}\cos x_k)^2}
\to 0
}
as $k\to \infty$. Finally, using this property we have that for all $k\geq N_5$
\eq{
\bigg|\frac{R_{1,k} \cos x_k - I_{1,k} \sinh y_k}{\cos^2x_k + \sinh^2 y_k}\bigg|
&= 
\bigg|\frac{\cos^2 x_k}{\cos^2 x_k+\sinh^2 y_k}\bigg|\cdot\bigg|\frac{R_{1,k} \cos x_k - I_{1,k} \sinh y_k}{\cos^2x_k}\bigg|\\
&\lesssim \frac{1}{\abs{\cos x_k}\sqrt{\gw_k}}  +\frac{ 1/\gw_k}{(\cos x_k \sqrt{\gw_k})^2}
}
decays to zero as $k\to\infty$.
However, this implies that $\re \tilde{Q}_{1}(\omega_k)\to 1 \neq 0$ as $k\to \infty$ which contradicts the assumption that $\abs{\tilde{Q}_{1}(\omega_k)}\to 0$ as $k\to \infty$. Hence there exists $c'>0$ such that $\abs{\tilde{Q}_{1}(\omega)}\geq c'$ for all $\gw\geq \gw_0$.

Finally, we have that there exist $\omega_0,c_1,c_2>0$ such that $|Q_1(\omega)|>c_2$ and $|Q_2(\omega)|>c_1$ for all $|\omega|\geq \omega_0$. This implies that $\norm{(I+P_b(i\omega)P_c(i\omega))^{-1}}^2 \leq \frac{1}{c_2^2}+\frac{1}{c_1^2}$ for all $|\omega|\geq \omega_0$, which completes the proof. 
\end{proof}
Having the above results, now we are ready to prove the main theorem.

\begin{proof} [Proof of Theorem \ref{mainthm}]
From Lemmas \ref{lemtf} and \ref{corresolbd1}, we have that $P_b(0)$ and $I+P_b(i\omega)P_c(i\omega),\ \omega \in \mathbb{R}\backslash \{0\}$ are nonsingular. These properties in Lemma \ref{beamres} imply that the resolvent $R(i\omega,A)$ exists for all $\omega \in \mathbb{R}$ and is given by the equations (\ref{reseqn}), (\ref{subreseqn}) and \eqref{schcom}. Therefore
\begin{equation}\label{resnorm}
\begin{aligned}
\norm{R(i\omega,A)}^2 &\leq \norm{R(i\omega,A_b)-R(i\omega,A_{b_{-1}})B_bC_c S(i\omega)B_c C_b R(i\omega,A_b)}^2 \\
                      &+\norm{R(i\omega,A_{b_{-1}})B_bC_c S(i\omega)}^2+\norm{S(i\omega)B_c C_bR(i\omega,A_b)}^2+\norm{S(i\omega)}^2
\end{aligned}
\end{equation}
where
$
S(i\omega) = \frac{1}{i\omega}+\frac{1}{\omega^2} B_c P_b(i\omega) (I+P_b(i\omega)P_c(i\omega))^{-1} C_c	
$
for $\omega \in \mathbb{R}\backslash\{0\}$ and $S(0)=(B_c P_b(0)C_c)^{-1}$.

From Lemma \ref{lemtf}, we have that there exists $M>0$ such that $\|P_b(i\omega)\|\leq M(|\omega|+1)$ for all $\omega \in \mathbb{R}$. 
From Lemma \ref{ub_PbPc}, we have that there exist $\omega_0, \tilde{M}>0$ such that $\norm{(I+P_b(i\omega)P_c(i\omega))^{-1}}\leq \tilde{M}$ for all $|\omega|\geq \omega_0$. Moreover, from Lemma \ref{lem.observability}, we have that $\|R(i\omega,A_b)\|$ is uniformly bounded and from Lemmas \ref{corresolbd1} and \ref{corresolbd2}, we have that there exist $C',C^{\prime\prime}>0$ such that $\norm{R(i\omega,A_{b_{-1}})B_b}\leq C' \sqrt{|\omega|+1}$ and $\norm{C_bR(i\omega,A_b)}\leq C^{\prime\prime} \sqrt{|\omega|+1}$ for all $\omega\in\mathbb{R}$. These estimates imply that there exists $ M_1>0$ such that $\norm{S(i\omega)}\leq M_1$ for all $|\omega|\geq \omega_0$ and this further from equation \eqref{resnorm} implies that there exists $M_0>0$ such that $\norm{R(i\omega,A)}\leq M_0$ for all $|\omega|\geq \omega_0$. Since from Lemma \ref{beamres} we have $i\mathbb{R}\subset \rho(A)$, we conclude  that $R(i\omega,A)$ is uniformly bounded, which completes the proof. 
\end{proof}

\section{Robust Output Regulation of the Satellite Model} \label{sec.OR}
In this section, we present two controllers that solve the robust output regulation problem for the satellite system. We start by formulating the robust output regulation problem followed by the controllers that achieve the robust output tracking of the given reference signals. In addition, we present simulation results demonstrating the effectiveness of the controllers.

From the previous sections, the satellite system with control and observations on the rigid body is given by,
\begin{equation}\label{eq:ss}
\begin{aligned}
\dot{x}(t)&=Ax(t)+Bu(t)+B_d w_d(t),\\
y(t) &= Cx(t).
\end{aligned}
\end{equation}
with $A=\begin{bmatrix} \mathcal{A}_b & 0\\ -B_c \mathcal{C}_b & 0 \end{bmatrix},\ D(A)=\{(x_b,x_c) \in D(\mathcal{A}_b)\times X_c\  :\  \mathcal{B}_bx_b= C_c x_c\},\ B=\begin{bmatrix} 0 \\ B_c\end{bmatrix},\ B_d=\begin{bmatrix} B_{d0} & 0 \\ 0 & B_c\end{bmatrix},\ C=\begin{bmatrix} 0 & C_c\end{bmatrix},\ x(t)=\begin{bmatrix} x_b(t) \\ x_c(t) \end{bmatrix}$.
Here the operator $A$ generates an exponentially stable semigroup. 

The reference signals to be tracked and the disturbance signals to be rejected are of the form
\begin{equation}\label{refsig}
y_{ref}(t) = a_0+\sum_{k=1}^q{[a_k \cos(\omega_k t)+b_k \sin(\omega_k t)]},
\end{equation}
\begin{equation}\label{dissig}
w_d(t) = c_0+\sum_{k=1}^q{[c_k \cos(\omega_k t)+d_k \sin(\omega_k t)]},
\end{equation}
where $0<\omega_1<\omega_2<\cdots<\omega_q$ are known frequencies and $\{a_k\}_{k=0}^q, \{b_k\}_{k=1}^q$, $\{c_k\}_{k=0}^q,\ \{d_k\}_{k=1}^q$ are possibly unknown constant coefficients. 

We construct a dynamic error feedback controller of the form 
\begin{equation}\label{eq:contr}
\begin{aligned}
\dot{z}(t) &= \mathcal{G}_1 z(t)+\mathcal{G}_2 e(t),\quad z(0) = z_0,\\
u(t) &= K z(t)-\kappa e(t),
\end{aligned}
\end{equation}
on a Hilbert space $Z$, where $e(t)=y(t)-y_{ref}(t)$ is the regulation error, $y_{ref}(t)$ a given reference signal, $\mathcal{G}_1:D(\mathcal{G}_1)\subset Z \rightarrow Z$ generates a strongly continuous semigroup on $Z$, $\mathcal{G}_2 \in \mathcal{L}(\mathbb{R}^2,Z)$, $K \in \mathcal{L}(Z,\mathbb{R}^2)$ and $\kappa \in \mathbb{R}^{2\times 2}$, such that robust output regulation of the satellite system is achieved with a suitable choice of the parameters $(\mathcal{G}_1,\mathcal{G}_2,K,\kappa)$. 

Let us denote $X_e=X\times Z$ to be the extended state space and $x_e(t)=(x(t),z(t))^T$ be the extended state. Then the closed-loop system containing the satellite system (\ref{eq:ss}) and the controller (\ref{eq:contr}) is given by
\begin{equation}\label{eq:CLS}
\begin{aligned}
\dot{x}_e(t)&=A_ex_e(t)+B_e u_e(t),\quad x_e(0)=x_{e0},\\
e(t)&= C_ex_e(t)+D_e u_e(t),
\end{aligned}
\end{equation}
where $A_e=\begin{bmatrix}A-B\kappa C & BK\\ \mathcal{G}_2C & \mathcal{G}_1 \end{bmatrix}$, $B_e=\begin{bmatrix} B_d & B\kappa\\ 0 & -\mathcal{G}_2 \end{bmatrix}$, $C_e=\begin{bmatrix} C & 0\end{bmatrix}$, $D_e=\begin{bmatrix} 0 & -I_Y \end{bmatrix}$ and $u_e(t)=\begin{bmatrix} w_d(t) \\ y_{ref}(t) \end{bmatrix}$. The operator $A_e$ generates a strongly continuous semigroup $T_e(t)$ on $X_e$.

\textbf{The Robust Output Regulation Problem}. Choose the controller parameters $(\mathcal{G}_1,\mathcal{G}_2,K,\kappa)$ in such a way that
\begin{itemize}
	\item [(a)] The closed-loop semigroup $T_{e}(t)$ generated by $A_e$ is exponentially stable.
	\item [(b)] There exists $\alpha_1>0$ such that for all initial states $x_{e0}\in X_e$, for the reference signal of the form \eqref{refsig} and for the disturbance signal of the form \eqref{dissig}, the regulation error $e(t)$ satisfies 
	\begin{align*}
	e^{\alpha_1 t}\|y(t)-y_{ref}(t)\| \rightarrow 0\quad \text{as}\  t \rightarrow \infty.
	\end{align*}
	\item [(c)] If the operators $(\mathcal{A}_b,\mathcal{B}_b,\mathcal{C}_b,A_c,B_c,C_c)$ are perturbed in such a way that the perturbed closed-loop system is exponentially stable, the perturbed $(\mathcal{A}_b,\mathcal{B}_b,\mathcal{C}_b)$ is an impedance passive boundary control system and the perturbed $(A_c,B_c,C_c)$ is an impedance passive systems, then (b) continues to hold for some $\tilde{\alpha}_1>0$. 
\end{itemize}
\begin{remark}In the above, $\alpha_1$ and $\tilde{\alpha}_1$ are determined by the stability margins of the closed-loop system and the perturbed closed-loop system, respectively.\end{remark}

Next, we show that the transfer function $P(i\omega)$ of the satellite system is nonsingular for all $\omega\in\mathbb{R}$. Because of this, we can track signals containing components at all frequencies $\omega$.
\begin{lemma}
On the imaginary axis, the transfer function of the satellite system \eqref{eq:ss} has the form $P(i\omega)=C_cS(i\omega)B_c$ and it is nonsingular for all $\omega\in\mathbb{R}$.
\end{lemma}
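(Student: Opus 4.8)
The plan is to read off both claims from the block form of the resolvent $R(i\omega,A)$ established in the Remark following Lemma~\ref{beamres}. Since $A$ generates an exponentially stable semigroup by Theorem~\ref{mainthm}, we have $i\mathbb{R}\subset\rho(A)$, and because $B\in\mathcal{L}(U,X)$ and $C\in\mathcal{L}(X,Y)$ are bounded, the transfer function of \eqref{eq:ss} on the imaginary axis is simply $P(i\omega)=CR(i\omega,A)B$, with no admissibility or well-posedness subtleties to address.

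Next I would substitute $C=\begin{bmatrix}0 & C_c\end{bmatrix}$, $B=\begin{bmatrix}0 \\ B_c\end{bmatrix}$ and the $2\times2$ block representation \eqref{reseqn}--\eqref{subreseqn} of $R(i\omega,A)$. Only the lower-right block survives the multiplication, so $P(i\omega)=C_c R_{22}(i\omega)B_c=C_c S(i\omega)B_c$, with $S(i\omega)$ given by \eqref{schcom}. This is well-defined for every $\omega\in\mathbb{R}$, since by Lemmas~\ref{lemtf} and~\ref{corresolbd1} the matrices $P_b(0)$ and $I+P_b(i\omega)P_c(i\omega)$ (for $\omega\ne0$) are nonsingular; this establishes the asserted form.

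For nonsingularity I would invoke the identity $S(i\omega)=[i\omega I_{X_c}+B_cP_b(i\omega)C_c]^{-1}$, valid for all $\omega\in\mathbb{R}$ as shown in the proof of Lemma~\ref{beamres} (and with $S(0)=(B_cP_b(0)C_c)^{-1}$ in the case $\omega=0$). Thus $S(i\omega)$ is by construction an invertible $2\times2$ matrix. Since $C_c$ is the identity on $X_c=\mathbb{R}^2$ and $B_c=\diag(1/m,1/I_m)$ is invertible, $P(i\omega)=C_cS(i\omega)B_c$ is a product of three invertible matrices and hence nonsingular for every $\omega\in\mathbb{R}$.

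There is no genuine obstacle here, as the substantive work was already done in Lemma~\ref{beamres}; the only points requiring a little care are verifying that the block product $CR(i\omega,A)B$ genuinely collapses to $C_cS(i\omega)B_c$ and that the case $\omega=0$ is covered by the second branch of \eqref{schcom}.
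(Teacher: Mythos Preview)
Your proposal is correct and follows essentially the same approach as the paper: compute $P(i\omega)=CR(i\omega,A)B$ using the block structures of $C$, $B$, and the resolvent expression \eqref{reseqn}--\eqref{subreseqn}, observe that only the $R_{22}(i\omega)=S(i\omega)$ block survives, and conclude nonsingularity from the invertibility of $B_c$, $C_c$, and $S(i\omega)$. The only minor remark is that you need not appeal to Theorem~\ref{mainthm} for $i\mathbb{R}\subset\rho(A)$, since Lemma~\ref{beamres} already provides this directly (and avoids any hint of circularity).
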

\begin{proof}
The transfer function of \eqref{eq:ss} on the imaginary axis is given by $P(i\omega)=CR(i\omega,A)B$, where $R(i\omega,A)$ is the resolvent in \eqref{reseqn} of $A$. Replacing the operators by their expressions, we obtain
\begin{align*}
P(i\omega) &= \begin{bmatrix} 0 & C_c\end{bmatrix}\begin{bmatrix} R_{11}(i\omega) & R_{12}(i\omega) \\ R_{21}(i\omega) & R_{22}(i\omega)\end{bmatrix}\begin{bmatrix} 0 \\ B_c\end{bmatrix}\\
           &=C_cR_{22}(i\omega)B_c\\
					&=C_cS(i\omega)B_c.
\end{align*}
Since $B_c$, $C_c$ and $S(i\omega)$, $\omega\in\mathbb{R}$ are nonsingular, we have that $P(i\omega)$ is nonsingular for all $\omega \in \mathbb{R}$.
\end{proof}

\subsection{Robust Controllers for the Satellite System} \label{sec.controllers}

In this section, we present two internal model based controllers for the robust output regulation of the satellite system. 
\subsubsection{A Passive Controller for the Satellite Model}\label{sec.passive}
We have that the satellite system is (\ref{eq:ss}) an impedance passive system 
and exponentially stable. Therefore, based on \cite[Thm. 1.2]{RebarberWeiss2003} and \cite[Def. 5.1]{Lassi2019}, we can construct a passive controller for the robust output tracking of the given sinusoidal reference signals. We choose $Z=(\mathbb{R}^2)^{2q+1}$,
\begin{equation}\label{passivecr}
\begin{aligned}
\mathcal{G}_1 &=\text{diag}(G_0, G_1 , G_2 ,\cdots,G_q),\\
G_0 &=0_Y,\ 
G_k = \begin{bmatrix}0 & \omega_kI_Y \\ -\omega_k I_Y & 0\end{bmatrix},\quad k=1,2,\cdots, q,\\
\mathcal{G}_2 &=(\mathcal{G}_2^k)_{k=0}^q,\ \mathcal{G}_2^0=-I_Y,\ \mathcal{G}_2^k=-c_1\begin{bmatrix}I_Y \\ 0\end{bmatrix}, \ k=1,2,\cdots,q, \\
K &=- \mathcal{G}_2^*, \ \text{and}\  
\kappa =c_2 I_Y,
\end{aligned}
\end{equation}
where $c_1,c_2>0$ affect the stability properties of the closed-loop system.
\begin{theorem}
The controller \eqref{eq:contr} with the choices of parameters in \eqref{passivecr} solves the robust output regulation problem for the satellite model.
\end{theorem}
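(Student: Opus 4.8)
The plan is to invoke the theory of passive internal model controllers from \cite{RebarberWeiss2003} and \cite{Lassi2019}. Two ingredients are already in place: by Theorem~\ref{mainthm} the satellite system \eqref{eq:ss} is exponentially stable, and (as noted at the beginning of this subsection) it is impedance passive with collocated input and output. What remains is to check that the controller \eqref{eq:contr}, \eqref{passivecr} is a passive controller incorporating an internal model of all the frequencies appearing in \eqref{refsig} and \eqref{dissig}, and that the plant transfer function is nonsingular at those frequencies.

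First I would record the structure of \eqref{passivecr}. The operator $\mathcal{G}_1=\diag(G_0,G_1,\dots,G_q)$ is skew-adjoint on the finite-dimensional space $Z=(\R^2)^{2q+1}$ with $\sigma(\mathcal{G}_1)=\{0,\pm i\omega_1,\dots,\pm i\omega_q\}$, each eigenvalue having geometric multiplicity $\dim Y=2$; being diagonalizable and paired with $K=-\mathcal{G}_2^*$, this realizes exactly the $p$-copy internal model structure used for robust output regulation (cf.\ \cite{PauPoh2014}, \cite{Lassi2019}). Computing the energy balance of the controller, for classical solutions one obtains, using skew-adjointness of $\mathcal{G}_1$ together with $K=-\mathcal{G}_2^*$ and $\kappa=c_2I_Y$,
\begin{align*}
\frac12\frac{d}{dt}\norm{z(t)}_Z^2
&=\re\iprod{\mathcal{G}_1 z(t)}{z(t)}_Z+\re\iprod{\mathcal{G}_2 e(t)}{z(t)}_Z\\
&=\re\iprod{e(t)}{\mathcal{G}_2^*z(t)}_Y\\
&=\re\iprod{-e(t)}{u(t)}_Y-c_2\norm{e(t)}_Y^2.
\end{align*}
Hence the controller is impedance passive with respect to the pair $(-e,u)$, and the term $-c_2\norm{e}^2$ together with $c_1>0$ furnishes the strict passivity margin that upgrades strong stability of the closed loop to exponential stability.

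Next I would apply the Lemma preceding this theorem, which gives that the transfer function of \eqref{eq:ss} equals $P(i\omega)=C_cS(i\omega)B_c$ and is nonsingular for every $\omega\in\R$, in particular for $\omega\in\{0,\omega_1,\dots,\omega_q\}$. With the plant exponentially stable and impedance passive, the controller impedance passive with the strict margin above, and the internal model incorporated, \cite[Thm.~1.2]{RebarberWeiss2003} (equivalently the passive-controller construction of \cite[Def.~5.1]{Lassi2019}) yields that for suitable $c_1,c_2>0$ the operator $A_e$ in \eqref{eq:CLS} generates an exponentially stable semigroup $T_e(t)$, which is part (a). Parts (b) and (c) then follow from the internal model principle \cite{HamPoh2010}, \cite{PauPoh2014}: exponential closed-loop stability together with the $p$-copy internal model forces $e^{\alpha_1 t}\norm{y(t)-y_{ref}(t)}\to 0$ for $\alpha_1$ determined by the closed-loop stability margin; and since the controller is unchanged under the perturbations admitted in (c), the internal model persists, so whenever the perturbed closed loop remains exponentially stable the same convergence holds with some $\tilde\alpha_1>0$.

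The step I expect to require the most care is matching the hypotheses of \cite[Thm.~1.2]{RebarberWeiss2003} and \cite[Def.~5.1]{Lassi2019} precisely: one must confirm well-posedness of the feedback interconnection (the feedthrough $\kappa=c_2I_Y$ and the plant's zero feedthrough create no algebraic loop), verify the observability-type condition on $(\mathcal{G}_1,\mathcal{G}_2,K)$ implicit in those results, and keep the sign conventions of the power-preserving interconnection consistent so that the controller acts passively \emph{into} the loop. Once these routine points are settled, exponential stability of the closed loop and robust regulation are immediate from the impedance passivity of the two subsystems and the non-resonance of $P$.
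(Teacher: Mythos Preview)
Your proposal is correct and follows essentially the same route as the paper: both rely on the satellite system being impedance passive and exponentially stable, and on the parameters in \eqref{passivecr} matching the passive controller construction of \cite[Def.~5.1]{Lassi2019}. The paper's proof is more terse, invoking \cite[Thm.~5.2]{Lassi2019} directly rather than \cite[Thm.~1.2]{RebarberWeiss2003}; this matters slightly because the latter requires $\re P(i\omega_k)$ nonsingular, a hypothesis the paper explicitly notes (just after the theorem) is relaxed in \cite{Lassi2019} thanks to $\kappa=c_2I_Y>0$, so your appeal to nonsingularity of $P(i\omega_k)$ alone is cleaner when routed through \cite{Lassi2019}.
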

\begin{proof}
We have that the satellite system (\ref{eq:ss}) is impedance passive and exponentially stable and the choices of parameters in \eqref{passivecr} are adopted from \cite[Def. 5.1]{Lassi2019}. Therefore, by \cite[Thm. 5.2]{Lassi2019}, the controller \eqref{eq:contr}, \eqref{passivecr} solves the robust output regulation problem.  
\end{proof}
We note that the controller \eqref{eq:contr}, \eqref{passivecr} is the one given in \cite[Thm. 1.2]{RebarberWeiss2003} when $c_1$ and $c_2$ are chosen such that \eqref{eq:contr}, \eqref{passivecr} is a minimal realization of 
\begin{equation}\label{minrea}
C(s)=-C_0-\sum_{k=-q}^q{\frac{I_Y}{s-i\omega_k}},
\end{equation}
where $C_0\geq \frac{1}{2}I_Y$ and $\omega_{-k}=-\omega_k$. The assumption $\re P(i\omega_k)$ is nonsingular  for all $k=0,1,2,\cdots q$ in \cite[Thm. 1.2]{RebarberWeiss2003} can be relaxed due to the fact that the feedthrough operator $\kappa$ of the controller satisfies $\kappa>0$ (see \cite[sec. 5]{Lassi2019} for more details). 
\subsubsection{An Observer Based Controller for the Satellite Model} \label{sec.obs-con}

Since the input operator $B$ and the output operator $C$ are bounded, we can construct an observer based controller based on \cite{HamPoh2010} and \cite[Sec. VI]{Pau2016} for robust output tracking of the satellite system as follows.

We choose the state space of the controller as $Z = Z_0\times X$, where $Z_0 = (\mathbb{R}^2)^{2q+1}$. The controller parameters $(\mathcal{G}_1,\mathcal{G}_2,K,\kappa)$ of the dynamic error feedback controller (\ref{eq:contr}) are given by,
\begin{align*}
\mathcal{G}_1&=\begin{bmatrix} G_1 & 0\\ BK_1 & A+BK_2\end{bmatrix},\ 
\mathcal{G}_2=\begin{bmatrix}G_2\\ 0 \end{bmatrix},\ 
K = \begin{bmatrix} K_1 & K_2 \end{bmatrix},\ \kappa=0,
\end{align*}
where $K_1 \in \mathcal{L}(Z_0,\mathbb{R}^2), K_2 \in \mathcal{L}(X,\mathbb{R}^2)$. The operators $(G_1,G_2)$ are defined as  
\begin{align*}
G_1 &= \text{diag}(i\omega_{-q} I_{Y},\cdots i\omega_0 I_{Y},\cdots,i\omega_q I_{Y}) \in \mathcal{L}(Z_0),\\ G_2 &=(G_2^k)_{k=-q}^{q} \in \mathcal{L}(\mathbb{R}^2,Z_0),\ G_2^k=I_Y,\ k=-q,\cdots,q.
\end{align*}
We define an operator $H \in \mathcal{L}(X, Z_0)$ by $H=(H_k)_{k=-q}^q$ which is the solution of the Sylvester equation $G_1H = HA+G_2C$ and 
$H_k$ can be obtained by solving the system
\begin{equation}\label{eq.sylsol}
H_k=G_2^k CR(i\omega_k,A).
\end{equation}
Then we define $B_1=HB=(G_2^k P(i\omega_k))_{k=-q}^q\in \mathcal{L}(\mathbb{R}^2,Z_0)$.
Finally, we choose $K_1\in \mathcal{L}(Z_0,\mathbb{R}^2)$ in such a way that $G_1+B_1K_1 \in \mathcal{L}(Z_0)$ is Hurwitz and we define $ K_2 = K_1H$.

With the above parameters, the controller (\ref{eq:contr}) can be written as,
\begin{align}
\dot{z}_1(t) &= G_1z_1(t)+G_2e(t), \label{eq:contr1}\\
\dot{z}_2(t) &=BK_1z_1(t)+(A+BK_2)z_2(t), \label{eq:contr2}\\
u(t)&=Kz(t).
\end{align}
Here $z_1(t)\in Z_0,\ z_2(t)\in X(=X_b\times X_c)$. Equation (\ref{eq:contr1}) is the servocompensator on the state space $Z_0$  which contains internal model and it is an ODE system by construction.
Equation (\ref{eq:contr2}) is an observer for the satellite system on the state space $X$ and is given by,
\begin{align*}
\dot{\hat{x}}_{l1}(\xi,t)&=-\gamma (\rho a)^{-1} \hat{x}_{l1}(\xi,t)-EI  \hat{x}_{l2}^{\prime\prime}(\xi,t),\ -1<\xi<0,\\ \dot{\hat{x}}_{l2}(\xi,t)&= (\rho a)^{-1}  \hat{x}_{l1}^{\prime\prime}(\xi,t),\ -1<\xi<0,\\
\dot{\hat{x}}_{r1}(\xi,t)&=-\gamma (\rho a)^{-1} \hat{x}_{r1}(\xi,t)-EI  \hat{x}_{r2}^{\prime\prime}(\xi,t),\ 0<\xi<1,\\ \dot{\hat{x}}_{r2}(\xi,t)&= (\rho a)^{-1}  \hat{x}_{r1}^{\prime\prime}(\xi,t),\ 0<\xi<1,\\
\dot{\hat{x}}_{c1}(t,0)&= EI \hat{x}_{l2}'(\xi,t)|_{\xi=0}-EI \hat{x}_{r2}'(\xi,t)|_{\xi=0}+u_1(t),\\
\dot{\hat{x}}_{c2}(t,0)&=-EI \hat{x}_{l2}(\xi,t)|_{\xi=0}+EI \hat{x}_{r2}(\xi,t)|_{\xi=0}+u_2(t),\\
\hat{x}_{r2}(1,t)&=\hat{x}_{r2}'(1,t)=0,\ \hat{x}_{l2}(-1,t)=\hat{x}_{l2}'(-1,t)=0,\\
\hat{x}_{l1}(0,t)&=\hat{x}_{r1}(0,t)=\hat{x}_{c1}(t),\
\hat{x}_{l1}'(0,t)=\hat{x}_{r1}'(0,t)=\hat{x}_{c2}(t),
\end{align*}
where $\hat{x}_{l1}(\xi,t),\hat{x}_{l2}(\xi,t),\hat{x}_{r1}(\xi,t),\hat{x}_{r2}(\xi,t),\hat{x}_{c1}(\xi,t)$ and $\hat{x}_{c2}(\xi,t)$ are the estimates of $x_{l1}(\xi,t)$, $x_{l2}(t)$, $x_{r1}(\xi,t)$, $x_{r2}(\xi,t)$, $x_{c1}(\xi,t)$ and $x_{c2}(\xi,t)$, respectively, and $z_2(t)$ is given by $z_2(t)=(\hat{x}_{l1}(\cdot,t),\hat{x}_{l2}(\cdot,t),\hat{x}_{r1}(\cdot,t),\hat{x}_{r2}(\cdot,t),\hat{x}_{c1}(\cdot,t),\\ \hat{x}_{c2}(\cdot,t))^T$. This shows that the controller (\ref{eq:contr}) is a PDE-ODE system.
\begin{theorem}
The controller $(\ref{eq:contr})$ with the above choices of parameters solves the robust output regulation problem for the satellite system \eqref{eq:ss}.
\end{theorem}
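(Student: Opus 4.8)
The plan is to verify the three requirements~(a)--(c) of the robust output regulation problem, the main ingredients being the exponential stability of the satellite semigroup (Theorem~\ref{mainthm}), the nonsingularity of the transfer function $P(i\omega)$ established above, and the internal model principle of \cite{HamPoh2010} and \cite[Sec.~VI]{Pau2016}. First I would note that the controller is well defined: since $i\R\subset\rho(A)$ by Theorem~\ref{mainthm}, the operator $H=(H_k)_{k=-q}^q$ with $H_k=G_2^kCR(i\omega_k,A)$ is bounded and solves the Sylvester equation $G_1H=HA+G_2C$, so that $B_1=HB=(G_2^kP(i\omega_k))_{k=-q}^q$; because $G_2^k=I_Y$ and $P(i\omega_k)$ is nonsingular for every $k$, the pair $(G_1,B_1)$ is controllable, a gain $K_1$ with $G_1+B_1K_1$ Hurwitz exists, and $K_2=K_1H\in\mathcal{L}(X,\R^2)$ is bounded. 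As $B$ and $C$ are bounded, $A_e$ generates a $C_0$-semigroup $T_e(t)$ on $X_e$.

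The crux is the closed-loop exponential stability, requirement~(a). Here I would pass to the bounded, boundedly invertible coordinates $(x,z_1,z_2)\mapsto(\varepsilon,w,z_2)$ with $\varepsilon=x-z_2$ and $w=z_1+Hz_2$. Using $K_2=K_1H$ one rewrites \eqref{eq:contr2} as $\dot z_2=Az_2+BK_1(z_1+Hz_2)=Az_2+BK_1w$, whence $\dot\varepsilon=\dot x-\dot z_2=A\varepsilon+B_dw_d$; and using the Sylvester identity in the form $HA-G_1H=-G_2C$ together with $e=Cx-y_{ref}=Cz_2+C\varepsilon-y_{ref}$ one obtains $\dot w=(G_1+B_1K_1)w+G_2C\varepsilon-G_2y_{ref}$. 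Thus, for $u_e=0$, the closed-loop generator in the new coordinates is block lower triangular, with diagonal blocks $A$, $G_1+B_1K_1$, $A$ and bounded off-diagonal couplings $G_2C$ and $BK_1$. Since $A$ generates an exponentially stable semigroup by Theorem~\ref{mainthm} and $G_1+B_1K_1$ is Hurwitz, the triangular structure together with the exponential stability of the diagonal blocks shows that $T_e(t)$ is exponentially stable. (The observer \eqref{eq:contr2} carries no output injection; this is admissible precisely because $A$ itself is already exponentially stable.)

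Finally I would deduce requirements~(b) and~(c). The signals \eqref{refsig} and \eqref{dissig} are generated by an exosystem with eigenvalues $\{i\omega_k\}_{k=-q}^q$ (where $\omega_{-k}=-\omega_k$ and $\omega_0=0$), and by construction $G_1=\diag(i\omega_{-q}I_Y,\dots,i\omega_qI_Y)$ together with $G_2^k=I_Y$ forms a $p$-copy internal model of this exosystem with $p=\dim Y=2$. Since the closed-loop system is exponentially stable, the internal model principle \cite{HamPoh2010,Pau2016} yields that for every initial state $x_{e0}\in X_e$ the regulation error decays to zero at an exponential rate, giving~(b); and as the internal model is unaffected by perturbations of $(\mathcal{A}_b,\mathcal{B}_b,\mathcal{C}_b,A_c,B_c,C_c)$, property~(b) persists for every perturbation that keeps the closed loop exponentially stable, which is~(c). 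The main obstacle in this programme is the stability step: identifying the coordinates $(\varepsilon,w,z_2)$ that triangularize $A_e$ and checking that the diagonal blocks are exactly $A$, $G_1+B_1K_1$, $A$; the remainder is routine bookkeeping and an appeal to the cited results.
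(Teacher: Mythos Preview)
Your proof is correct. The paper's own proof, by contrast, is a one-line appeal: since the controller construction is taken verbatim from \cite[Sec.~7]{HamPoh2010} and \cite[Sec.~VI]{Pau2016}, and since $A$ is exponentially stable with bounded $B$, $C$ and $P(i\omega_k)$ nonsingular, the hypotheses of \cite[Thm.~15]{Pau2016} are met and the conclusion follows directly.

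What you have done is unpack that citation: your similarity transformation $(x,z_1,z_2)\mapsto(\varepsilon,w,z_2)$ with $\varepsilon=x-z_2$, $w=z_1+Hz_2$ is exactly the change of variables underlying the proof of \cite[Thm.~15]{Pau2016}, and your verification that the transformed generator is block lower triangular with diagonal blocks $A$, $G_1+B_1K_1$, $A$ reproduces that argument in this concrete setting. The advantage of your route is that it is self-contained and makes transparent why no output injection is needed in the observer (because $A$ is already exponentially stable) and why the closed-loop stability margin is $\min\{\text{margin of }A,\text{ margin of }G_1+B_1K_1\}$, a point the paper only mentions later in the robustness discussion. The paper's route is shorter and avoids repeating a standard construction, at the cost of being a black-box citation.
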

\begin{proof}
Since the construction of the controller $(\ref{eq:contr})$ with the above choices of parameters is adopted from \cite[Sec. 7]{HamPoh2010} and \cite[Sec. VI]{Pau2016}, based on \cite[Thm. 15]{Pau2016}, the controller solves the robust output regulation problem for the satellite system \eqref{eq:ss}. 
\end{proof}
\subsection{Robustness of Closed-loop Stability}In the case of the passive controller, the controller parameters $\mathcal{G}_2$, $K$ and $\kappa$ depend on the parameters $c_1$, $c_2$ and therefore the closed-loop stability margin $\alpha_1$ depends on the choice of the parameters $c_1$ and $c_2$. 
 On the other hand, for the observer based controller the closed-loop stability margin is determined by the minimum of stability margins of $A$ and $G_1+B_1K_1$, respectively, see the proof of \cite[Thm. 15]{Pau2016} for more details. The stability margin of $G_1+B_1K_1$ can be affected by adjusting the gain parameter $K_1$. This can be done for example by linear quadratic regulator design or pole placement.

From Section \ref{sec.controllers} we have that both controllers with suitable choices of parameters solve the robust output regulation problem. Therefore, $A_e$ generates an exponentially stable semigroup $T_e(t)$ and there exist $\alpha_1>0$ and $M_e\geq 1$ depending on the controller and the chosen parameters such that $\norm{T_e(t)}\leq M_e e^{-\alpha_1 t}$. If $\Delta \in \mathcal{L}(X_e)$ is a perturbation of $A_e$, where the perturbation is generated by the perturbations in $(\mathcal{A}_b,\mathcal{B}_b,\mathcal{C}_b,A_c,B_c,C_c)$, such that $\norm{\Delta}< \alpha_1/M_e$, then $A_e+\Delta$ generates an exponentially stable semigroup $\tilde{T}_e(t)$ on $X_e$ and $\norm{\tilde{T}_e(t)}\leq M_e e^{(-\alpha_1+M_e \norm{\Delta}) t}$ for all $t\geq 0$. Therefore the stability margin $\tilde{\alpha}_1$ of the perturbed semigroup $\tilde{T}_e(t)$ satisfies $\tilde{\alpha}_1\geq \alpha_1-M_e \norm{\Delta}$. In addition, the semigroup $T_e(t)$ may remain exponentially stable under perturbations with large norms in which cases the decay rates cannot be estimated explicitly by using the perturbation formula.
 
\subsection{Simulations}
Simulations are carried out in Matlab using passive and observer based controllers on the time interval $t=[0,15]$. We choose $m=1, I_m=1, E=1, I = 1, \rho=1, a=1$ and $\gamma=5$.  We track the reference signal $y_{ref}(t)=\begin{bmatrix}1+3\cos(t) & 2-\sin(5t)+1.5\cos(2t)\end{bmatrix}^T$ and reject the disturbance signal $w_d(t)=\begin{bmatrix} 0 & 0 & 10 & 15\end{bmatrix}^T$. Thus, the frequencies $\{\omega_k\}_{k=0}^q$ with $q=3$ are $\{0,1,2,5\}$. We choose the controller initial state as $z_0=0$ and the initial state for the satellite system as $x_0=\begin{bmatrix}0 & 4(1+\xi)^2 & 0 & 4(1-\xi)^2 & 0 & 0\end{bmatrix}^T$. The solutions of the satellite system are approximated using Legendre spectral Galerkin method \cite{Kri2020}. The number of basis functions used for the approximation is $N=10$. 

 The controller parameters of the passive controller are chosen as in Section \ref{sec.passive}. To maximize the stability margin, ranges of values of the parameters $c_1$ and $c_2$ were tested. The closed-loop stability margin $\alpha_1$ and $\int_0^{15}{\norm{e(t)}^2 dt}$ for different parameter values are plotted in Figures \ref{fig:c1} and \ref{fig:c2}, respectively. The figures indicate that smaller values of $c_1$ and $c_2$ result in larger closed-loop stability margin and larger transient errors. By choosing $c_1=2.5$ and $c_2=4$, the output tracking and the tracking errors are depicted in Figures \ref{fig.trackinglowgain} and \ref{fig.trackingerrors}, respectively.  

\begin{figure}%
    \centering
    \subfloat{{\includegraphics[width=5cm]{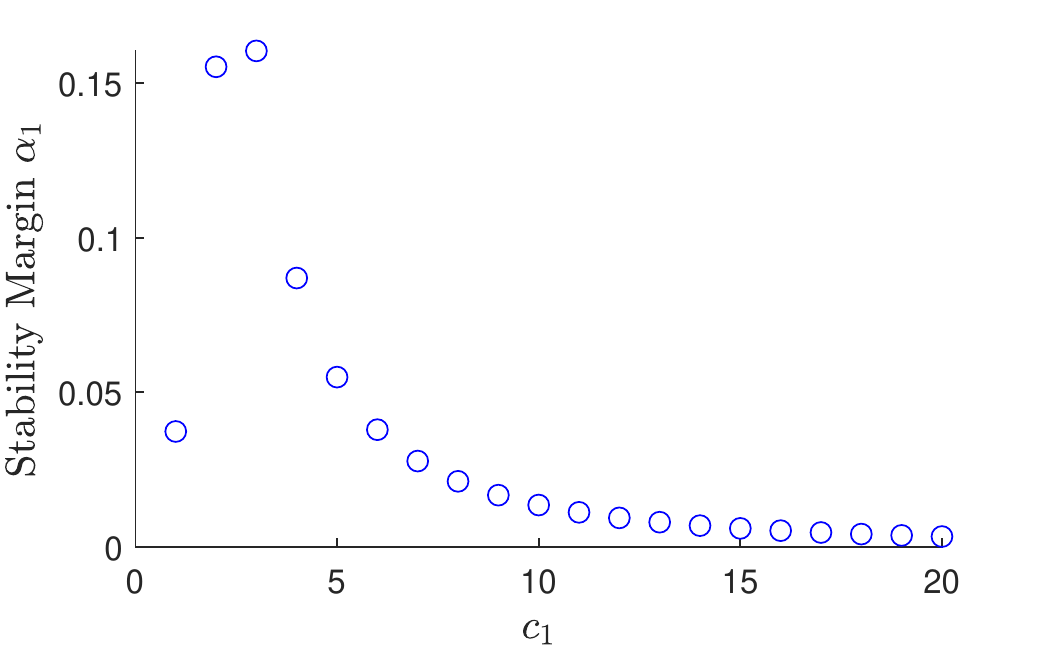} }}
    \qquad
    \subfloat{{\includegraphics[width=5cm]{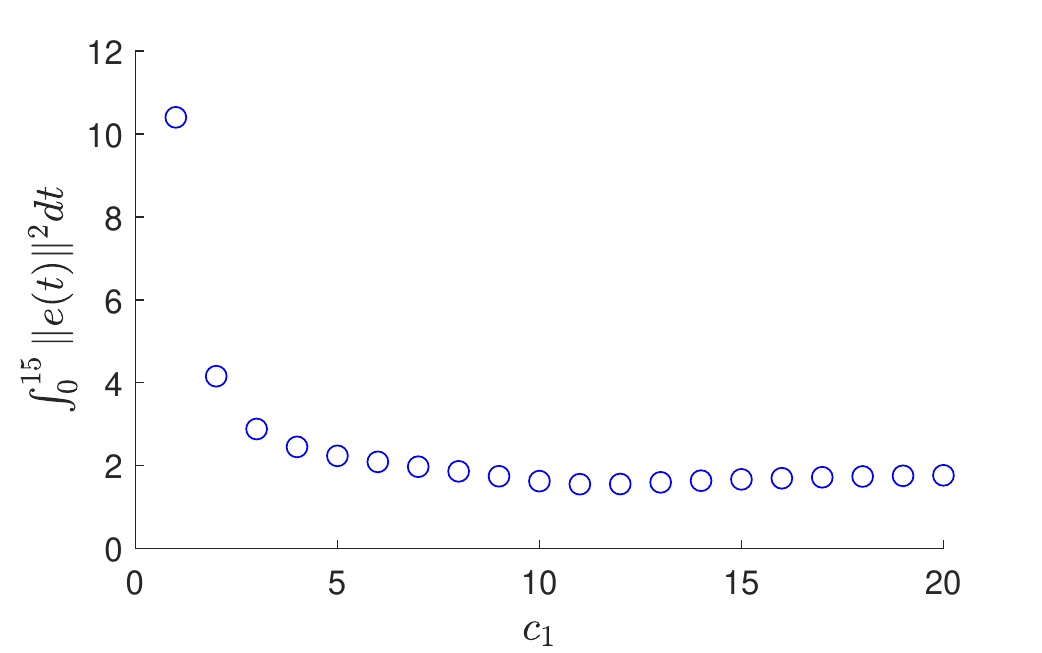} }}
    \caption{The closed-loop stability margin and $\int_0^{15}\norm{e(t)}^2 dt$ for the passive controller with $c_2=4$}
    \label{fig:c1}
\end{figure}
\begin{figure}%
    \centering
    \subfloat{{\includegraphics[width=5cm]{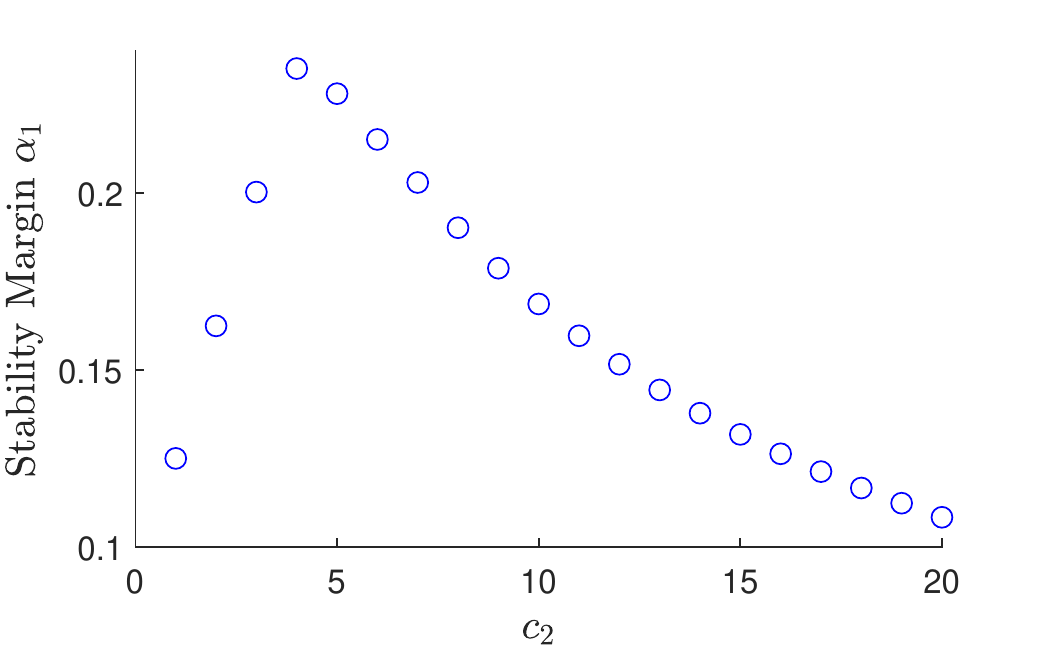} }}
    \qquad
    \subfloat{{\includegraphics[width=5cm]{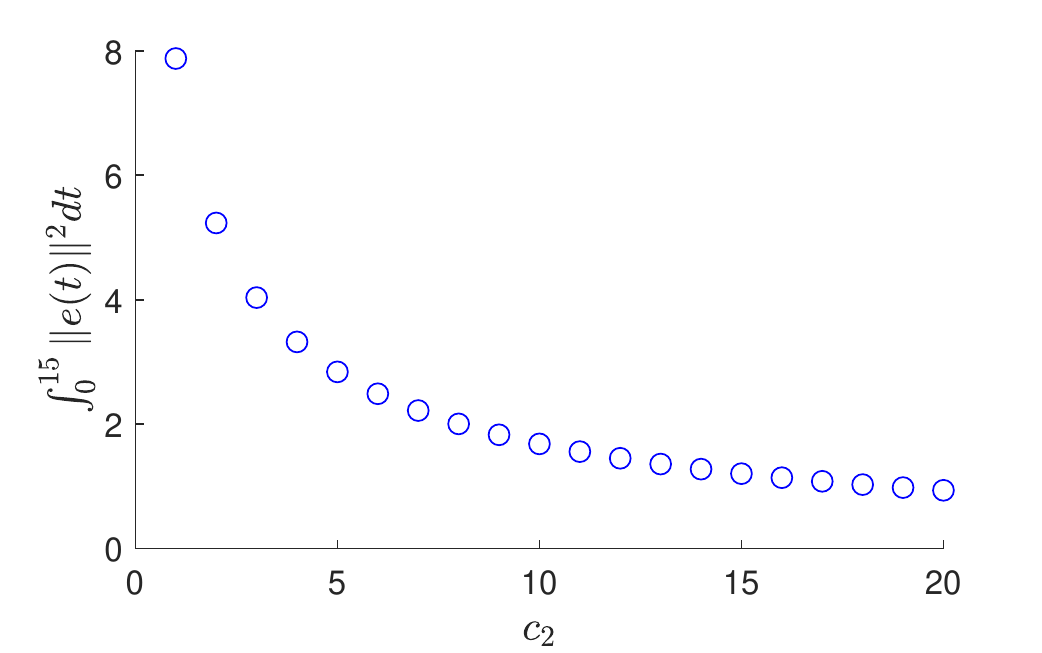} }}
    \caption{The closed-loop stability margin and $\int_0^{15}\norm{e(t)}^2 dt$ for the passive controller with $c_1=2.5$}
    \label{fig:c2}
\end{figure}
\begin{figure}
    \centering
    \subfloat{{\includegraphics[width=5cm]{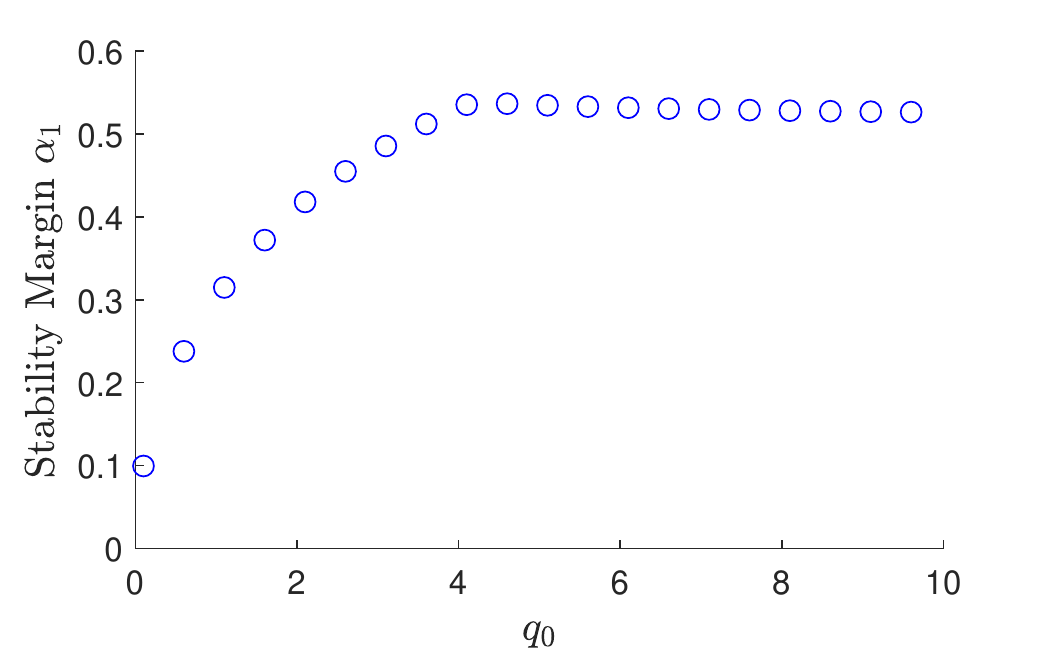} }}
    \qquad
    \subfloat{{\includegraphics[width=5cm]{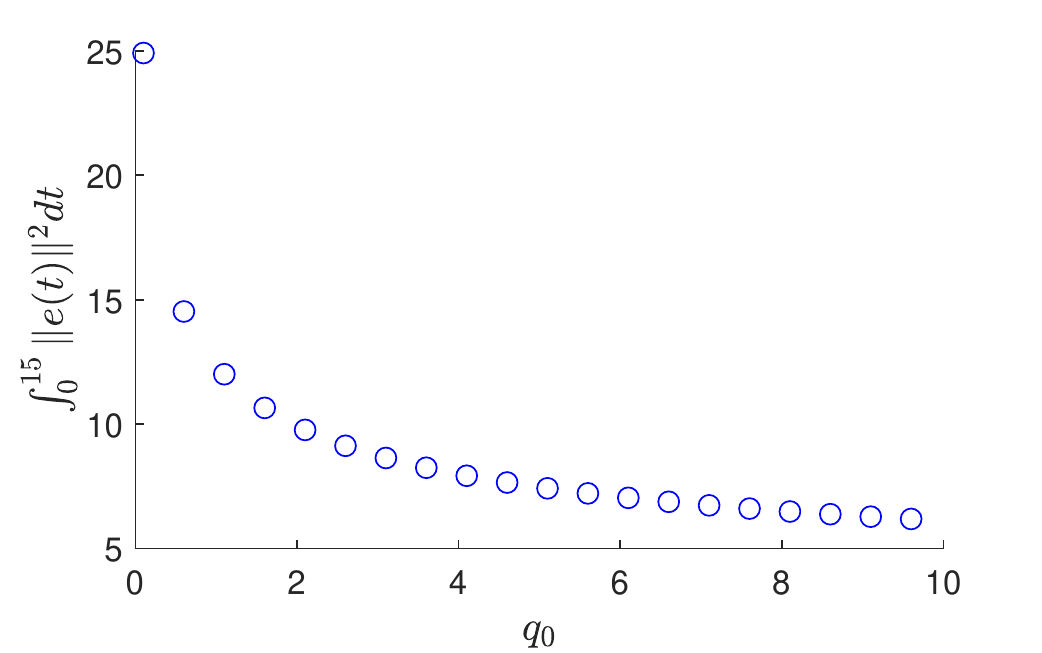} }}
    \caption{The closed-loop stability margin and $\int_0^{15}\norm{e(t)}^2 dt$ for the observer based controller with $R=0.1 I_2$}
    \label{fig:Q}
\end{figure}
\begin{figure}
    \centering
    \subfloat{{\includegraphics[width=5cm]{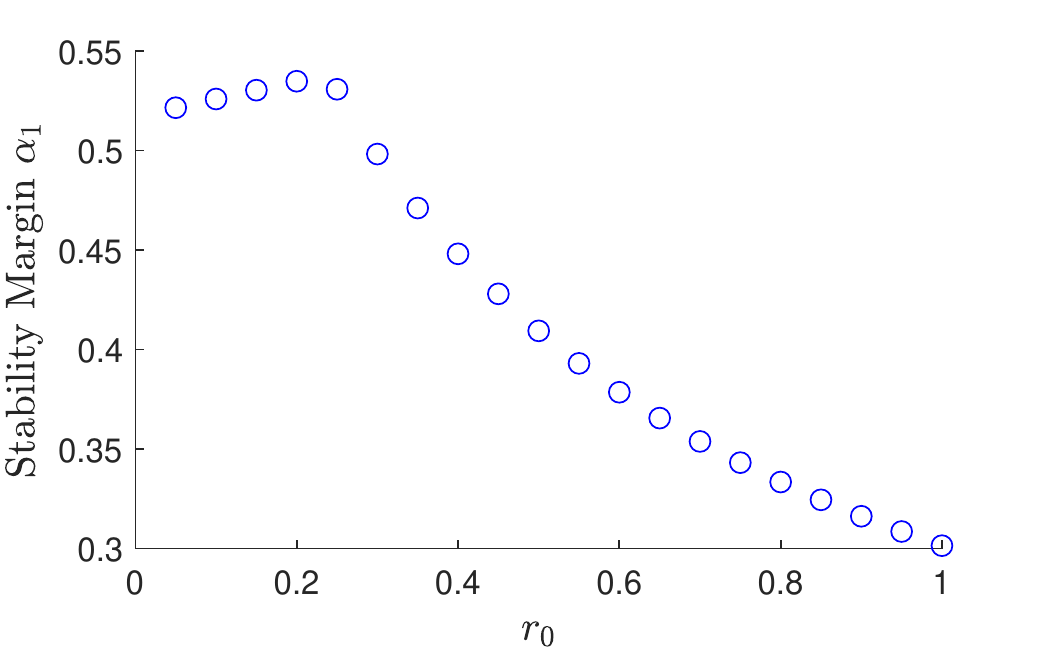} }}
    \qquad
    \subfloat{{\includegraphics[width=5cm]{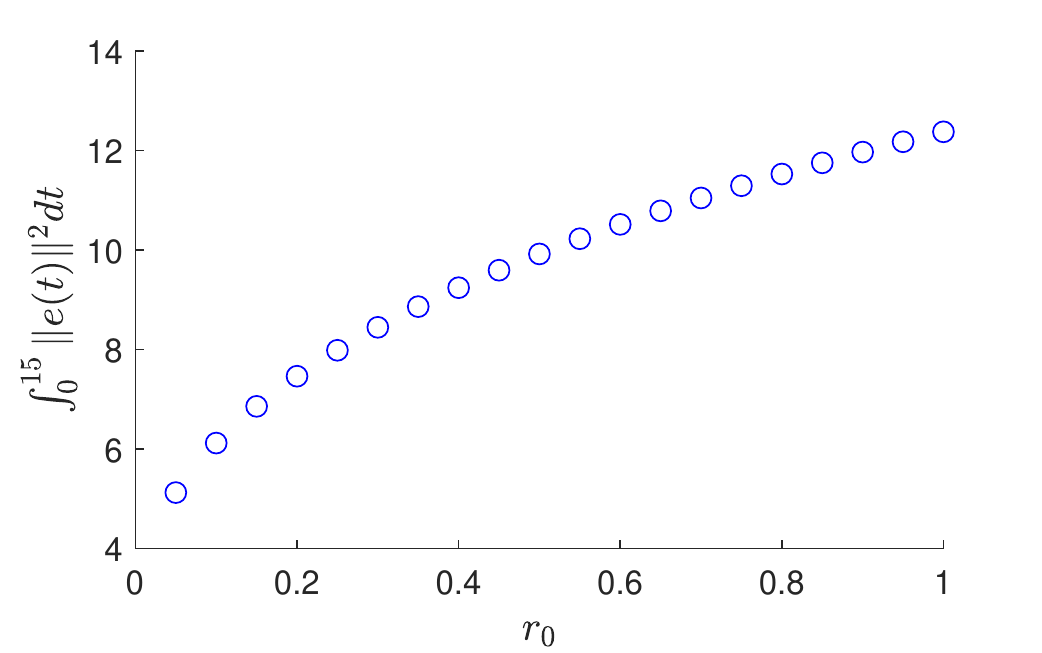} }}
    \caption{The closed-loop stability margin and $\int_0^{15}\norm{e(t)}^2 dt$ for the observer based controller with $Q=10 I_{Z_0}$}
    \label{fig:R}
\end{figure}

\begin{figure}[!ht]
\begin{center}
\includegraphics[scale=0.6]{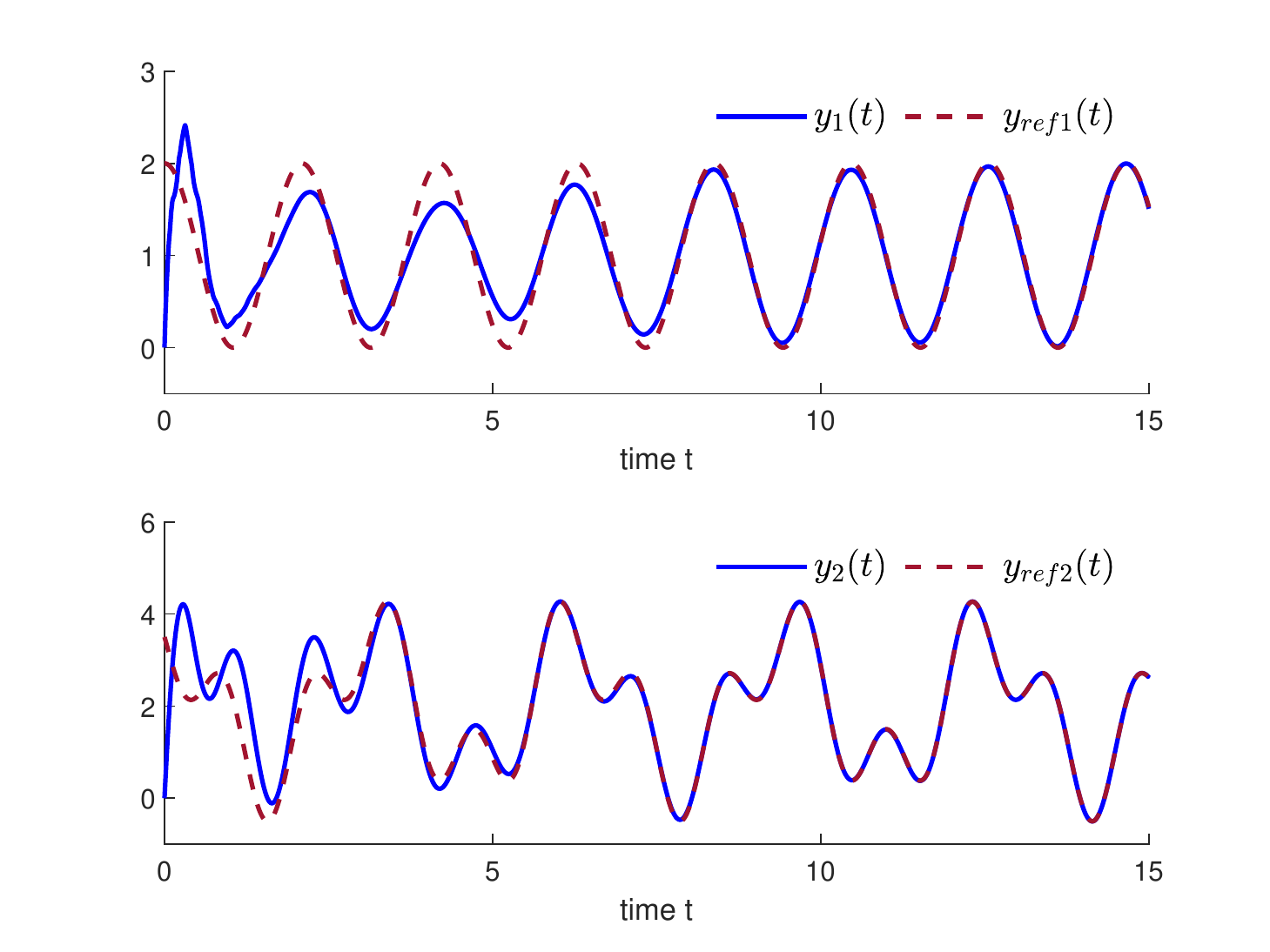}
\end{center}
\caption{Output tracking using a passive controller}
\label{fig.trackinglowgain}
\end{figure}

\begin{figure}[!ht]
\begin{center}
\includegraphics[scale=0.6]{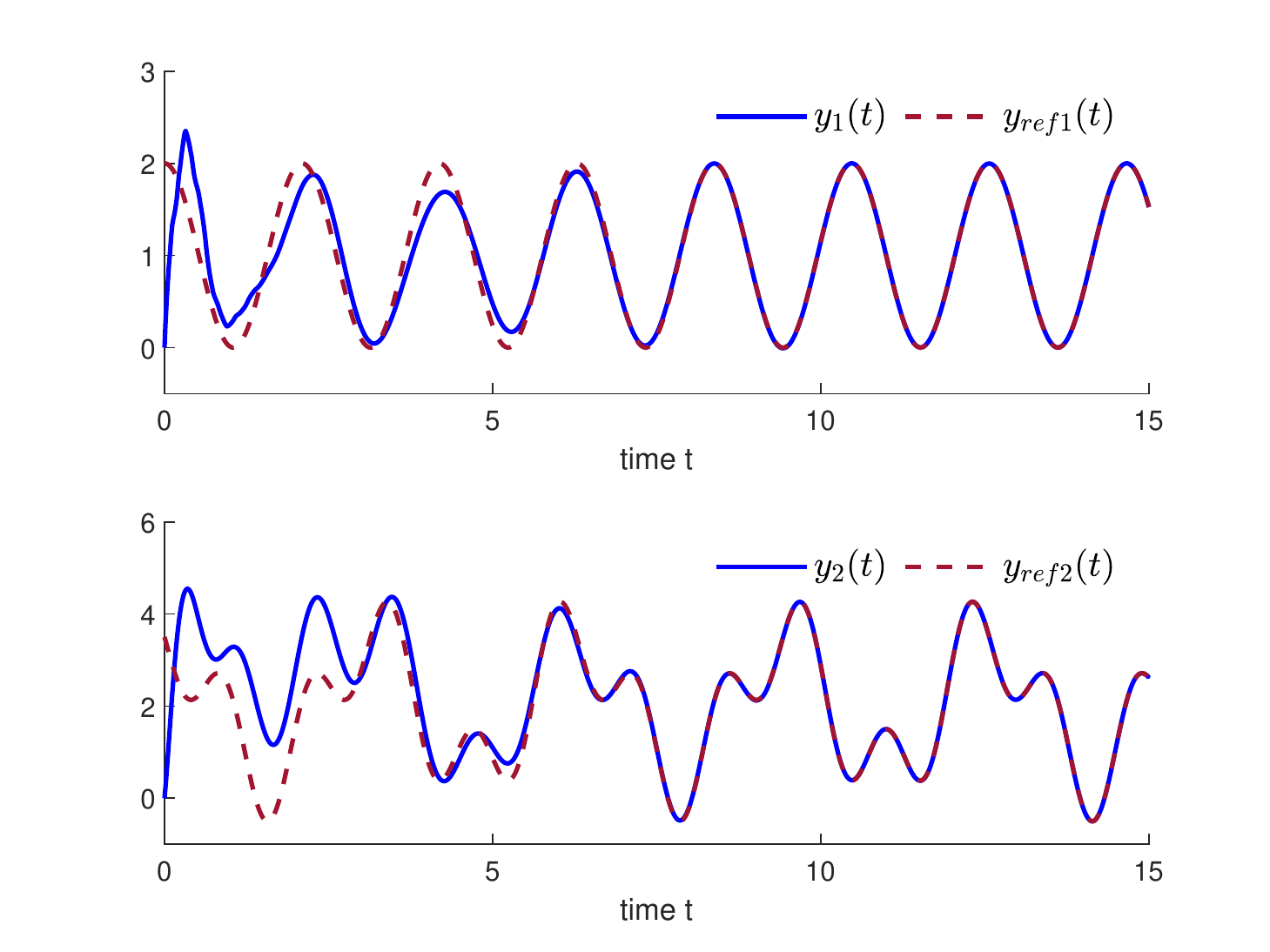}
\end{center}
\caption{Output tracking using an observer based controller}
\label{fig.tracking}
\end{figure}
\begin{figure}[!ht]
\begin{center}
\includegraphics[scale=0.6]{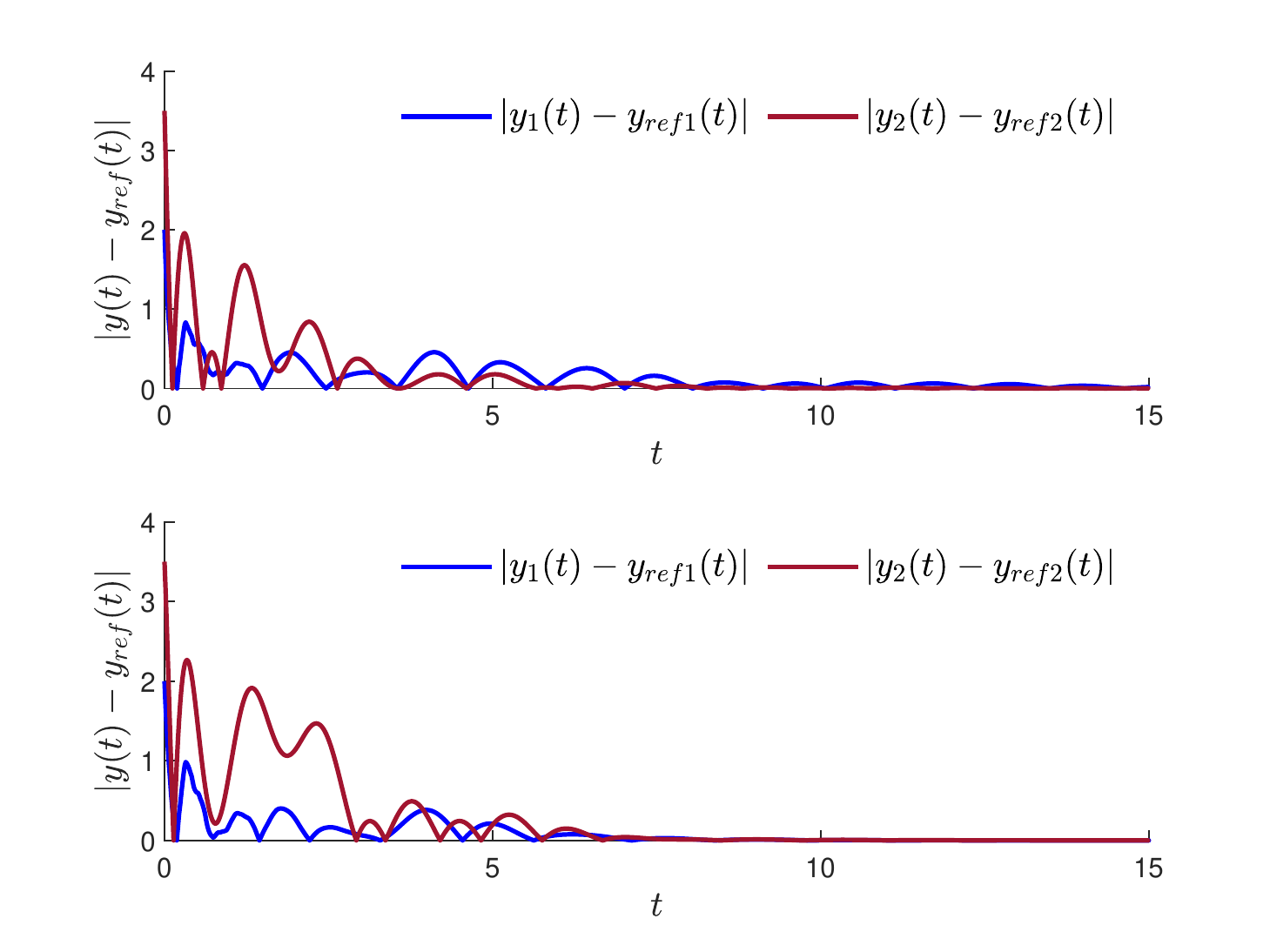}
\end{center}
\caption{Tracking errors for passive(above) and observer based(below) controllers }
\label{fig.trackingerrors}
\end{figure}

 The components of the observer based controller are chosen as in Section \ref{sec.obs-con}.
The matrix $H$ is obtained by solving the system (\ref{eq.sylsol}), where we use the approximations $A^N$ and $C^N$ in place of $A$ and $C$, respectively. The gain matrix $K_1$ is obtained using Matlab lqr function with $Q= q_0 I_{Z_0},\ q_0>0$ and $R=r_0 I_{2}, r_0>0$. To maximize the stability margin, ranges of values of the parameters $q_0$ and $r_0$ were tested. The closed-loop stability margin $\alpha_1$ and $\int_0^{15}{\norm{e(t)}^2 dt}$ for different parameter values are plotted in Figures \ref{fig:Q} and \ref{fig:R}, respectively. It is observed that smaller control gains $r_0$ and larger $q_0$ result in larger closed-loop stability margin and smaller transient errors. By choosing $q_0=10$ and $r_0=0.1$, the output tracking and the tracking errors are depicted in Figures  \ref{fig.tracking} and \ref{fig.trackingerrors}, respectively.

It can be seen from the figures that both controllers achieve tracking of the given reference signals asymptotically and the tracking error decays to zero at an exponential rate. Moreover, we can see that the observer based controller can achieve larger closed-loop stability margin and therefore the asymptotic error convergence for the observer based controller is faster than that for the passive controller.  On the other hand, it is noted that even though the passive controller is a finite-dimensional controller and also the controller requires no information about the satellite system apart from passivity, it still achieves comparable performance to the infinite-dimensional observer based controller.

\section{Conclusion}\label{sec.con}
We investigated robust output tracking problem of a flexible satellite composed of two identical flexible solar panels and a center rigid body. A detailed proof of exponential stability of the model was presented. We constructed two robust controllers for the robust output tracking of the satellite model.
Moreover, simulation results showing the performances of the controllers were presented.

\end{document}